\documentclass[11pt,a4paper]{amsart}
\usepackage{hyperref}
\usepackage{color}
\usepackage{graphicx}

\usepackage[smalltableaux, centertableaux]{ytableau}
\usepackage{relsize}
\usepackage{amsfonts,epsfig,stmaryrd}
\usepackage{,amsfonts,amssymb,amsmath,amsthm,amsgen,amscd,datetime,dsfont,hyperref,tensor}

\usepackage{multicol,picinpar,enumerate,cleveref,mathabx}
\usepackage{euscript,color}

\DeclareMathOperator{\diag}{diag}

\newcommand{\p}{\mathfrak{p}}

\newcommand{\g}{\mathfrak{g}}

\renewcommand{\l}{\mathfrak{l}}
\newcommand{\h}{\mathfrak{h}}
\newcommand{\z}{\mathfrak{z}}
\newcommand{\s}{\mathfrak{s}}
\newcommand{\m}{\mathfrak{m}}
\newcommand{\n}{\mathfrak{n}}
\renewcommand{\k}{\mathfrak{k}}

\newcommand{\+}{\oplus}

\newcommand{\x}{\vect{x}}

\newcommand{\so}{\mathfrak{so}}

\newcommand{\co}{\mathfrak{co}}

\newcommand{\gl}{\mathfrak{gl}}

\newcommand{\GL}{\mathbf{GL}}

\newcommand{\SO}{\mathbf{SO}}
\renewcommand{\O}{\mathbf{O}}
\newcommand{\CO}{\mathbf{CO}}

\newcommand{\1}{\mathbf{1}}
\newcommand{\e}{\mathrm{e}}
\newcommand{\f}{\mathfrak{f}}
\renewcommand{\d}{\mathrm{d}}

\newcommand{\End}{\mathrm{End}}

\renewcommand{\c}{\bar{c}}

\usepackage{ upgreek }

\newcommand{\R}{\mathbb{R}} 
\newcommand{\C}{\mathbb{C}} 
\renewcommand{\S}{\mathbb{S}} 

\DeclareMathOperator{\Conf}{Conf}

\DeclareMathOperator{\Iso}{\mathrm{Isom}}

\newcommand{\F}{\mathcal{F}}

\newcommand{\G}{\mathcal{G}}

\renewcommand{\H}{\mathbf H}

\newcommand{\vect}[1]{\boldsymbol{#1}} 

\def\C{\mathbb{C}}
\def\R{\mathbb{R}}

\def\F{\mathcal {F}}

\def\Iso{\sf{Iso}}

\def\B{\mathcal{B}}

\def\GL{{\sf{GL}}}
\def\O{{\sf{O}}}
\def\CO{{\sf{CO}}}
\def\SO{{\sf{SO}}}

\def\End{{\sf{End}}}

\def\Iso{{\sf{Iso}} }

\def\Ad{{\sf{Ad}} }
\def\ad{{\sf{ad}} }

\def\p{{\mathfrak{p}}}
\def\k{{\mathfrak{k}}}

\def\g{{\mathfrak{g}}}

\def\f{{\mathfrak{f}}}
\def\so{{\mathfrak{so}}}

\def\n{{\mathfrak{n}}}
\def\s{{\mathfrak{s}}}
\def\h{{\mathfrak{h}}}
\def\i{{\mathfrak{i}}}
\def\z{{\mathfrak{z}}}
\def\z{{\mathfrak{z}}}
 
\def\c{{\mathfrak{c}}}

\theoremstyle{definition}
\theoremstyle{definition}
\newtheorem{definition}{Definition}[section]
\newtheorem{remark}[definition]{Remark}

\newtheorem*{example*}{Example}
\newtheorem*{remark*}{Remark}

\theoremstyle{plain}
\newtheorem{lemma}[definition]{Lemma}
\newtheorem*{lem*}{Lemma}
\newtheorem{proposition}[definition]{Proposition}
\newtheorem{corollary}[definition]{Corollary}
\newtheorem{theorem}[definition]{Theorem}
\newtheorem*{theorem*}{Theorem}

\newtheorem*{conjecture*}{Conjecture}

\begin{document}

\title{Locally conformally  homogeneous {L}orentzian spaces}

 \author[Thomas Leistner]{Thomas Leistner}\address{School of  Mathematical Sciences, Adelaide University, SA~5005, Australia}\email{thomas.leistner@adelaide.edu.au}

 \author[Lilia Mehidi]{Lilia Mehidi}
 \address{LMBA, CNRS, Universit\'e de Bretagne Occidentale, France}
 \email{lilia.mehidi@univ-brest.fr}
 
\author[Abdelghani Zeghib]{Abdelghani Zeghib}
\address{UMPA, CNRS, \'Ecole Normale Sup\'erieure de Lyon, France}
\email{abdelghani.zeghib@ens-lyon.fr}


\maketitle
\begin{abstract}
We study locally conformally homogeneous Lorentzian manifolds of dimension at least $3$,   admitting an essential pseudo-group of local conformal transformations. Generalizing a recent result of Alekseevsky and Galaev, we show that any such manifold $(M,g)$ is either conformally flat, or locally conformally equivalent to a homogeneous plane wave.   
When the manifold is non-conformally flat, we show the existence of a codimension-one lightlike foliation of Heisenberg type, which leads to the plane wave structure. Our approach relies on tools from Gromov's theory of rigid transformations. 
Finally, we observe that the plane wave metric in the conformal class coincides with the Penrose limit of $(M,g)$ along some null geodesic.
\end{abstract}

 \tableofcontents
 
 \section{Introduction}

 A fundamental problem in conformal geometry is to classify conformal manifolds with  \textit{essential} conformal transformations, that is, global conformal transformations that do not preserve a metric in the conformal class. For Riemannian conformal manifolds the situation is very rigid: up to conformal equivalence only the round sphere and Euclidean space admit essential conformal transformations. This was conjectured by Lichnerowicz in the 1960's  and   proved in a series of papers by Ferrand and Obata (\cite{Lelong-Ferrand71},  \cite{Obata71}, \cite{Ferrand96}, with contributions in \cite{Alekseevskii72}). 
 
For indefinite metrics, the situation is far more flexible: there exist pseudo-Riemannian metrics that are not conformally flat yet admit essential conformal transformations, both on non-compact manifolds and, as shown in \cite{frances12}, on compact manifolds in signatures other than the Lorentzian one. Non-compact examples in Lorentzian signature consist of plane waves. These are manifolds for which the metric in dimension $n+2$  is locally given by
\begin{equation}\label{pw}
g=2\,\d t\, \d v + \x^\top Q(t) \x\ \d t^2 +\d\x^\top\d\x,
\end{equation}
where $\x=(x^1, \ldots , x^n)^\top\in \R^n$, {$Q(t)$ is} a $t$-dependent symmetric $n\times n$-matrix. If $Q$ is not a scalar matrix, this metric is not conformally flat.
Plane waves for which this metric is defined on all of $\R^{n+2}$, for example,   admit non-isometric homotheties with fixed points. Therefore these homotheties are essential (for this implication see for example \cite[Proposition 2.5]{LeistnerTeisseire22}). 

\bigskip

Recently 
an important rigidity result for essential Lorentzian conformal structures in the  \textit{conformally homogeneous} setting has been obtained  by Alekseevsky and Galaev
 in \cite{AlekseevskyGalaev25}. They showed that if the manifold is simply connected and admits a transitive group of global conformal transformations that is essential, then the manifold is either conformally flat or the conformal class contains a plane wave metric that is isometrically homogeneous and geodesically complete. In the present article we will generalize this result in the following sense: we will drop the assumption that the manifold is simply connected and we will only assume that the manifold is  \textit{locally conformally homogeneous}, i.e.~the local conformal transformations act transitively. We will prove the following result.

 \begin{theorem}\label{maintheo}Let $(M, g)$ be a Lorentzian manifold of dimension at least $3$ and let $\bf P$ be its pseudo-group of local conformal transformations, that is the collection 
 of all conformal diffeomorphisms between open subsets of $M$. We assume $M$ locally conformally homogeneous, i.e.~$M$ is an orbit of $\bf P$. Then one of the following possibilities holds:
 
 \begin{enumerate}
 
 \item $\bf P$ preserves a Lorentzian metric in the $g$-conformal class.
 
 \item $(M, g)$ is conformally flat. 
 
 \item $(M, g)$ is locally conformally equivalent to a plane wave that is isometrically locally homogeneous.  More precisely, there exists a homogeneous plane wave $X$, such that 
 $(M, [g])$ is modeled on $(\Conf(X), X)$.
 \end{enumerate}
 \end{theorem}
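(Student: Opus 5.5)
We work on the conformal Cartan bundle $\hat M\to M$ of the Lorentzian conformal structure, with structure group the parabolic $P\subset \SO(2,n+2)$ and the Cartan connection $\omega$. By Gromov's theory of rigid geometric structures, the pseudo-group $\bf P$ lifts to local automorphisms of $(\hat M,\omega)$. Local conformal homogeneity together with Gromov's open--dense orbit theorem applied to a sufficiently high-order jet of the Cartan curvature gives the following: the Killing algebra $\g$ of germs of conformal vector fields at a point $x$ is finite dimensional and acts transitively. Its isotropy $\g_x$ embeds in $\p=\co(1,n+1)\ltimes\R^{n+2}$ through the $1$-jet.

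Next we look at $\g_x$. If the image of $\g_x$ under the projection $\p\to\co(1,n+1)\to\R$ (the dilation part) is zero, and the unipotent part $\R^{n+2}$ contributes no dilation, then all of $\g_x$ preserves a density. By homogeneity this density is a Lorentzian metric in the conformal class preserved by $\bf P$, and we are in case (1).

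Otherwise $\g_x$ contains an essential element. We then study the Weyl tensor $W_x$ (the harmonic curvature, since $\dim\ge 3$; for $\dim=3$ use the Cotton tensor). It is $\g_x$-equivariant with weight, and an element with nonzero dilation component forces $W_x$ to lie in the "null cone" of the $\CO(1,n+1)$-representation, i.e. to be nilpotent. The case $W\equiv 0$ is (2). If $W\neq 0$, a classification of degenerate Weyl tensors annihilated by a dilation-type element leads to the following. We expect that the Weyl tensor has pure type N in the Lorentzian Petrov-type sense, with a distinguished null direction $\ell_x$ spanned by a vector $k$ such that $W(k,\cdot,\cdot,\cdot)$ is of rank corresponding to $k\wedge \R^n$. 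The line field $\ell$ is $\bf P$-invariant, and $\ell^\perp$ gives a codimension-one lightlike distribution.

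The main step, which we expect to be the obstacle, is to show that $\ell^\perp$ is integrable and that its leaves carry a Heisenberg-type transverse structure. The goal is a foliation whose leaf-wise Killing algebra is $\mathfrak{heis}_{2n+1}$, acting simply transitively on leaves modulo $\ell$. We would try to obtain this by decomposing $\g$ under the grading element $E\in\g_x$. By transitivity, $\g/\g_x\simeq T_xM$ inherits the eigenvalue decomposition of $E$, and the subalgebra of positive weights should be a Heisenberg algebra $\h$ whose orbits are the leaves. Integrability follows once we know that $\h$ is a subalgebra of codimension one modulo isotropy.

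With this in hand, we produce the plane wave. Choose the $\h$-invariant representative metric; parallel null $k$ (using the Weyl type N condition and the Heisenberg symmetry) gives a Brinkmann/pp-wave. The $n$-dimensional abelian part of the Heisenberg symmetries, transverse to $k$, forces $Q$ to be quadratic in $\x$, so we obtain a plane wave (\ref{pw}). Local homogeneity under $\g$ makes it isometrically locally homogeneous. We then identify $\g$ with $\mathfrak{conf}(X)$ for the homogeneous plane wave $X$ with the same $Q$, using rigidity of the Cartan geometry. Analytic continuation of local conformal maps then gives a $(\Conf(X),X)$-structure on $M$, which is case (3).
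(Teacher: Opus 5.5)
There is a genuine gap. The step you flag as the obstacle is the core of the proof, and the mechanism you suggest for it does not work as stated. Take $B=\alpha\1+A$ in the isotropy, with $\alpha>0$ and $A$ a boost, and assume $\g_x\to\co(T_xM)$ is injective. Then the weights of $B$ on $\g$ lie in $\{0,\pm1\}$ (from the isotropy) together with $\{\alpha-1,\alpha,\alpha+1\}$ (from $\g/\g_x\cong T_xM$). The positive-weight subalgebra is therefore not a Heisenberg algebra tangent to $\ell^\perp$: for $\alpha>1$ it contains the transverse direction of weight $\alpha-1$, and it absorbs whatever isotropy has weight $1$. The right candidate is $\g^\alpha\oplus\g^{\alpha+1}$, and its behaviour is governed by resonances. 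The brackets $[\g^\alpha,\g^\alpha]\subset\g^{2\alpha}$, $[\g^{\alpha-1},\g^{\alpha}]\subset\g^{2\alpha-1}$ and $[\g^{\alpha-1},\g^{\alpha+1}]\subset\g^{2\alpha}$ can be nonzero only when $\alpha\in\{\tfrac12,1,2\}$. For other values of $\alpha$, $\g^{\alpha-1}\oplus\g^\alpha\oplus\g^{\alpha+1}$ is abelian and locally simply transitive, so the structure is conformally flat. For $\alpha=\tfrac12$ one must still show that $\g^\alpha$ stays tangent to the foliation away from $x$ and is abelian. In the paper this needs the transverse action of $\g$ on a curve (Lie algebras of vector fields on $\R$ have dimension at most $3$) and conformal flatness of the leaf quotients. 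You also need the Heisenberg fields to be Killing for \emph{one} metric in the class. The paper gets this from a locally transitive subalgebra containing them whose stabilizer at $x$ lies in $\so(T_xM)$. Choosing an invariant representative for an algebra that acts only along the leaves does not give it. Finally, going from such Killing fields to a plane wave is a genuine lemma (Proposition~\ref{pwkilling-prop}), and your sentence about type N and quadratic $Q$ does not replace it.

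The preliminary reductions also omit the ingredient that makes this Lie-algebraic analysis legitimate: algebraicity of the local isotropy (Gromov, Pecastaing). Your dichotomy for case (1) is stated for $\g_x$. But $\g_x\subset\so(T_xM)$ only gives invariance under the identity component of the isotropy group $H_x$, whereas case (1) needs $\d f_x\in\O(T_xM)$ for every germ $f\in H_x$. To extract an essential element of $\g_x$ from weak essentiality you need $H_x$ to have finitely many components, so that a power of an essential germ lies in $H_x^0$. Similarly, an essential element $\lambda\1+C\in\g_x$ need not have $C$ a boost, and its hyperbolic part need not lie in $\g_x$. The paper gets $\alpha\1+A$ itself in the isotropy from two facts: the linear isotropy contains the identity component of its Zariski closure, and the Jordan decomposition of Lemma~\ref{Jordan-lem} then applies. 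It then splits $\g$ itself, not just $\g/\g_x$, into eigenspaces via the semisimple part of $\ad_B$ in $\mathrm{Aut}(\g)$ (Lemma~\ref{Aut-prop}). You also need injectivity of the $1$-jet map on $\g_x$ in the non-flat case, which the paper gets from Frances--Melnick. It is the $2$-jet that embeds $\g_x$ into $\co(1,n+1)\ltimes\R^{n+2}$, and elements with trivial $1$-jet would add weights $-\alpha,-\alpha\pm1$. Your Weyl-tensor route to a $\mathbf P$-invariant null line is a reasonable alternative to Lemma~\ref{para-lem}. However, the scaling only forces $W_x$ to have a single boost weight, of modulus $2\alpha$. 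So type N is forced only for $\alpha=1$, and the boost-weight-one case $\alpha=\tfrac12$ has to be excluded separately.
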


When the pseudo-group $\mathbf{P}$ does not preserve any metric in the $g$-conformal class, we say that $(M,g)$ is \textit{weakly essential}, as introduced in Section \ref{Section: terminology}. This is a weaker condition than being essential.
 Our proof starts off similarly as in \cite{AlekseevskyGalaev25} by  noting that weak essentiality yields the existence of an element in the isotropy group that is not contained in $\SO(T_pM)$.
In \cite{AlekseevskyGalaev25}, simple connectedness and global conformal homogeneity ensure that the isotropy group is connected, which in turn provides a specific element in its Lie algebra (that does not lie in the isometry algebra). In our setting, however, we cannot directly conclude the existence of such an element. Therefore, we adopt a different approach. We work only with the local isotropy of local conformal transformations, but we use powerful tools from Gromov's theory of rigid transformations (see Appendix \ref{Gromov's Theory}) and from algebraic groups, which allow us to reduce the problem to the Lie algebra level (see Section \ref{Section: isotropy}), and in particular to obtain a hyperbolic element in the isotropy algebra that is not in $\so(T_pM)$.
Since we assume only local homogeneity, we can pass to the universal cover, where weak essentiality is preserved. Moreover, on a simply connected real-analytic manifold, local conformal vector fields extend to global ones. We may therefore work with the Lie algebra of conformal vector fields on the universal cover. Loosely speaking, by weakening the assumption of homogeneity to local homogeneity, we are able to overcome  the assumption of simple connectedness. 
Then, in Section \ref{Section: Heis algebra}, we use the decomposition of the Lie algebra of conformal vector fields induced by the hyperbolic element to derive the existence of a codimension-one lightlike foliation of \textit{Heisenberg type}, i.e., with a transitive action of a Heisenberg algebra.  
Our approach, together with the new characterization of plane waves obtained in Section \ref{Section: Determining a plane wave by Killing fields}, not only leads to the generalization in Theorem \ref{maintheo}, but also provides a more conceptual approach and simplifies the proof in \cite{AlekseevskyGalaev25} for the simply connected globally homogeneous case.

 \bigskip

In the last section, we consider the consequence of our result for {\bf Penrose limits} of locally conformally homogeneous Lorentzian manifolds. 
The Penrose limit is a famous construction in gravitational physics, which shows that any Lorentzian manifold admits a  plane wave spacetime as a limit, this limit being taken along any lightlike geodesic (see \cite{Blau-PL} for instance). 
As shown in  Theorem \ref{maintheo}, a weakly essential locally conformally homogeneous Lorentzian manifold of dimension $\geq 3$ is either conformally flat or locally conformal to a homogeneous plane wave.
A natural question is then whether this plane wave coincides with the Penrose limit of the Lorentzian manifold along some lightlike geodesic. A priori, the Penrose limit is a plane wave associated with a metric, rather than with a conformal structure.
The underlying question is then whether two conformally related metrics have conformally equivalent Penrose limits along some lightlike geodesic (note that conformally related metrics have the same unparameterized lightlike geodesics). It turns out that the answer is affirmative, so the Penrose limit is a conformal invariant, as shown  in Proposition~\ref{Penrose-prop}.
As a consequence, we obtain that the plane wave metric in the conformal class of a weakly essential locally conformally homogeneous Lorentzian manifold $(M,g)$ (as in item (3) of  Theorem~\ref{maintheo}) coincides with the Penrose limit of $(M,g)$ along some null geodesic.
\begin{corollary} 
Let $(M, g)$ be a non-conformally flat Lorentzian manifold of dimension at least $3$ which is locally conformally homogeneous. Assume that $(M,g)$ is weakly essential. Then, $(M,g)$  is locally conformally equivalent to its Penrose limit along some null geodesic.  
\end{corollary}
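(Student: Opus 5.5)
The plan is to combine Theorem~\ref{maintheo} with the conformal invariance of Penrose limits (Proposition~\ref{Penrose-prop}), together with the classical fact that a plane wave is isometric to its own Penrose limit along a suitable null geodesic.

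\emph{Step 1: reduce to item (3).} Weak essentiality means exactly that $\mathbf P$ preserves no metric in $[g]$, so alternative (1) of Theorem~\ref{maintheo} is excluded. Non-conformal flatness excludes (2). Hence (3) holds: there is a homogeneous plane wave $X$ with metric $g_X$ of the form \eqref{pw} such that $(M,[g])$ is modeled on $(\Conf(X),X)$. In particular, every point $p\in M$ has a neighbourhood $U$ and a conformal diffeomorphism $\phi:U\to V\subset X$. Thus $\phi^*g_X = e^{2f}g|_U$ for some function $f$.

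\emph{Step 2: choose the geodesic and transfer the Penrose limit.} Choose a null geodesic $\gamma$ of $g$ through $p$, to be specified below. Since conformally related metrics share unparametrized null geodesics, $\phi\circ\gamma$ is, after reparametrization, a null geodesic $\tilde\gamma$ of $g_X$. Proposition~\ref{Penrose-prop} says the Penrose limit along a null geodesic depends only on the conformal class, up to conformal equivalence. The Penrose limit is local along the geodesic and natural under conformal maps. Therefore the Penrose limit of $(M,g)$ along $\gamma$ is conformally equivalent to the Penrose limit of $(V,g_X)$ along $\tilde\gamma$.

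\emph{Step 3: Penrose limit of a plane wave.} Take $\gamma$ so that $\tilde\gamma$ is a null geodesic of $X$ transverse to the wave fronts $\{t=\mathrm{const}\}$, for instance the curve $v=\x=0$ parametrized by $t$. In Rosen or Brinkmann coordinates adapted to $\tilde\gamma$, the Penrose rescaling $g_\lambda=\lambda^{-2}\Phi_\lambda^*g_X$ is computed explicitly. Here $\Phi_\lambda(t,v,\x)=(t,\lambda^2v,\lambda\x)$ in the coordinates of \eqref{pw}. This map leaves $g_X$ invariant: $2\,\d t\,\lambda^2\d v+\lambda^2\x^\top Q\x\,\d t^2+\lambda^2\d\x^\top\d\x$, divided by $\lambda^2$, gives back $g_X$. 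So the limit is $g_X$ itself.

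\emph{Step 4: conclude.} Combining the steps, $(M,g)$ near $p$ is conformal to $(X,g_X)$, which is conformal to the Penrose limit along $\gamma$. Because the model $X$ is the same at every point, this gives local conformal equivalence of $(M,g)$ with its Penrose limit along $\gamma$, where the limit is defined on a neighbourhood of $\gamma$ in Brinkmann coordinates.

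The main subtlety I expect is Step 2. One must check that Proposition~\ref{Penrose-prop} applies to the locally defined conformal map $\phi$, given that the geodesic parametrizations differ. One must also make sure the limit plane wave is taken on the region identified with $V$, rather than requiring a global Brinkmann chart.
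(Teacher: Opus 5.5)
Your proposal is correct and is essentially the paper's argument: Theorem~\ref{maintheo} leaves only item (3), Proposition~\ref{Penrose-prop} makes the Penrose limit a conformal invariant, and a plane wave is its own Penrose limit along a null geodesic transversal to the parallel null field. The only nit is in Step 3. The paper's definition of the Penrose limit requires adapted coordinates with no $\d u^2$ term, and Brinkmann coordinates are not adapted because of the term $\x^\top Q\x\,\d t^2$. So either work in Rosen coordinates $2\,\d u\,\d v+\overline{c}_{ij}(u)\,\d x_i\,\d x_j$, where the metric is already adapted and visibly equals its limit (as the paper does), or note that the Rosen--Brinkmann coordinate change commutes with $\Phi_\lambda$.
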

This raises the question of whether a direct proof of this fact can be found, which we plan to investigate in future work.

\section{Terminology}\label{Section: terminology}

In this brief section we will fix our terminology for actions on manifolds. We always assume that the manifold $M$ is connected.   A   semi-Riemannian manifold $(M,g)$ is  \textit{conformally homogeneous} if the group of conformal transformations acts transitively, and  \textit{locally conformally homogeneous} if the pseudo-group of local conformal transformations acts transitively. The latter 
 means that for any $p,q \in M$, there exist neighborhoods $V_p$ of $p$ and $V_q$ of $q$, and a 
 conformal diffeomorphism  $\phi: V_p \to V_q$ sending $p$ to $q$. 
 
If $\g$ is a Lie algebra of conformal vector fields on $(M,g)$ and $p \in M$, the  \textit{$\g$-orbit of $p$} is the set of points in $M$ that can be reached from $p$ via the iterated flows of vector fields from $\g$. Then the Lie algebra $\g$ acts  \textit{transitively} if $M$ is the only $\g$-orbit. This is equivalent to the property that for any $p \in M$, the evaluation map
$X \in \g \mapsto X(p) \in T_p M$
is surjective. Indeed, if the evaluation map is surjective at each point, each $\g$-orbit is open, and since we assume that $M$ is connected, it must be equal to $M$. 
It turns out that if $(M,g)$ is locally conformally homogeneous and simply connected, any local conformal vector field extends globally on $M$, giving rise to the Lie algebra of global conformal vector fields on $M$. In this case, local conformal homogeneity is equivalent to $\g$ acting transitively (this is recalled in Appendix \ref{Gromov's Theory} and Proposition \ref{Prop. M 1-connected}).

Finally, we say that $\g$ acts  \textit{locally transitively (at $p$)} if the $\g$-orbit of $p$ is open, i.e., if the evaluation map at $p$ is surjective.

Following the usual terminology, e.g. in \cite{Frances08}, $(M,g)$ is  \textit{inessential} if there is a metric $g^\prime$ in the conformal class of $g$ 
 which is preserved by all   conformal transformations, and  \textit{essential} otherwise. For our purposes we need a local version of this definition. We say that $(M,g)$ is  \textit{strongly inessential} if there is a metric $g^\prime$ in the conformal class of $g$ 
 which is preserved by all  \textit{local}  conformal transformations, and  \textit{weakly essential} otherwise.
 A Lie algebra $\g$ of conformal vector fields is  \textit{essential} if there is no metric in the conformal class for which the vector fields from $\g$ are Killing. As will be recalled in the text, in our context of locally conformally homogeneous structures, weakly essential is equivalent to the Lie algebra $\g$ of global conformal vector fields on the universal cover being essential.

\section{Plane waves}
\subsection{Basic properties of plane waves}
\label{pwbackground-sec}
A  \textit{plane wave} is a non-flat Lorentzian manifold $(M,g)$ that admits a  vector field $\xi$ such that  \begin{equation}\label{pwdef}
\nabla \xi=0,\quad g(\xi,\xi)=0,\quad
R(X,Y)=0,\quad \nabla_XR=0\quad\text{ for all $X,Y\in \xi^\perp$,}\end{equation} 
where $\nabla$ is the Levi-Civita connection and $R$  the curvature tensor of $(M,g)$.
Since a plane wave admits a parallel null vector field $\xi$, its tangent bundle is filtered into a null line bundle  and 
the bundle $\xi^\perp$ of degenerate (null) hyperplanes,
$\langle \xi\rangle \subset \xi^\perp \subset TM$,
which are both parallel and hence provide totally geodesic  foliations of $M$. 

Locally, a plane wave metric in dimension $n+2$ is given by
\begin{equation}\label{pw}
g=2\d t \d v + \x^\top Q(t) \x\ \d t^2 +\d\x^\top\d\x,
\end{equation}
where $\x=(x^1, \ldots , x^n)^\top\in \R^n$, {$Q(t)$ is} a $t$-dependent symmetric $n\times n$-matrix {and $\xi=\partial_v$.}  

It is well-known (see for instance \cite{blau-oloughlin03}) that  a plane wave in the form~(\ref{pw}) admits a $(2n+1)$-dimensional Heisenberg algebra $\mathfrak{heis}$ of Killing vector fields that span the hyperplane distribution $\xi^\perp$. These are given by the central $\xi=\partial_v $ and
\[ \vect{u}^\top \partial_{\x}- \dot{\vect{u}}^\top\x \,\partial_v=\sum_{i=1}^n u^i \partial_{x^i} - (\sum_{i=1}^n \dot{u}^i x^i) \xi, \]
where $\vect{u}=(u^1,\ldots, u^n)$ is a solution to the ODE $\ddot{\vect{u}}=Q\vect{u}$,  see for example~\cite[Section~4.3]{GlobkeLeistner16}. 
Moreover (see for instance \cite{blau-oloughlin03}), the full algebra of Killing fields tangent to $\xi^\perp$ is an extension $\k \ltimes \mathfrak{heis}$, where $\k$ is  the Lie algebra of a compact Lie group, and consists  of Killing fields contained in the isotropy algebra of some point.

It has been shown  in \cite{blau-oloughlin03} that the metric~(\ref{pw}) admits a Killing vector field that is transversal to $\xi^\perp$, i.e.~that it has a  transitive algebra of Killing vector fields, if and only if 
\[Q(t)=\e^{tF} S \e^{-tF}\quad \text{ or }\quad Q(t)=\tfrac{1}{t^2}\e^{\log (t) F}S\e^{-\log ( t)F},\]
where $S$ is a symmetric matrix and $F$ a skew  matrix, both constant and of size $n\times n$. The first case is referred to as  \textit{regular} and the second as  \textit{singular}. In the singular case the metric is only defined  on $t>0$. 
The additional Killing vector field in these cases is 
\[\partial_t+\x^\top F\partial_{\x}\quad\text{ or }\quad t\partial_t+\x^\top F \partial_{\x}-v\partial_v,\]
respectively. Hence, if the Lie algebra of Killing vector fields acts  transitively, then it contains $\R \ltimes (\k \ltimes \mathfrak{heis})$. If the plane wave is non-flat, this is the full  algebra of Killing fields.  
Now, by \cite[Proposition 5.1]{HPW}, this algebra is isomorphic to $$\g_0 \cong (\R \oplus \k) \ltimes \mathfrak{heis}.$$ 
 Let us remark that the metrics arising in the regular and singular cases are not locally isometric; however, they are conformally equivalent, even globally if  the regular metric~(\ref{pw}) is defined on $\R^{n+2}$ and  the singular one on $\{t>0\}\times \R^{n+1}$, see for example \cite{Holland-Sparling} or \cite{AlekseevskyGalaev25}. 

The metric~(\ref{pw}) also admits a homothetic vector field 
\[2v\partial_v + \x^\top\partial_{\x},\] that commutes with $\k$, and with the additional Killing vector field.
For a non-conformally flat plane wave metric on $I \times \R^{n+1}$, where $I$ is an interval, the Lie algebra of conformal vector fields was determined in \cite[Theorem~2, Corollary~2]{Holland-Sparling} (without assuming local conformal homogeneity). If the metric is locally conformally homogeneous, it follows from \cite[Theorem~2]{Holland-Sparling} that, if the Lie algebra of conformal vector fields acts transitively, then there exists a plane wave metric in the conformal class for which the Lie algebra of Killing fields also acts transitively.  
In this case, the Lie algebra of conformal vector fields contains
$$
\g = (\R \oplus \R \oplus \k) \ltimes \mathfrak{heis} 
= \k \ltimes \mathfrak{r},
$$
where $\mathfrak{r} := (\R \oplus \R) \ltimes \mathfrak{heis}$ is contained in the solvable radical of $\g$.
Moreover, they show that, for this plane wave metric in the conformal class, the full Lie algebra of conformal vector fields is homothetic and coincides with $\g$.

\subsection{Determining a plane wave by its Killing fields}\label{Section: Determining a plane wave by Killing fields}

The fact that a plane wave is actually characterised by the  particular arrangement of Killing vector  fields described above is shown by the following result. This result is a  generalization of \cite[Theorem 3]{GlobkeLeistner16}, where it was assumed that the Killing vector fields commute.
We formulate this generalization for indefinite metrics of arbitrary signature. The Lorentzian version of it will be used in the proof of Theorem~\ref{maintheo}.
\begin{proposition}\label{pwkilling-prop}
Let $(M,g)$ be a  semi-Riemannian manifold of dimension $m=n+2$ and with Riemannian curvature tensor $R$. Assume that there are Killing vector fields $\xi_0,\xi_1,\ldots \xi_n$   that span $\xi_0^\perp$ and satisfy
\[[\xi_0,\xi_i]=0,\qquad [\xi_i ,\xi_j]\in \langle \xi_0\rangle,\qquad \text{ for all }i,j =1,\ldots n.\]
Then $\xi_0$ is parallel and it holds that $R(X,Y)=0$ and $\nabla_XR=0$ for all $X,Y\in \xi_0^\perp$. In particular, if  $(M,g)$ is Lorentzian, it is a plane wave.
\end{proposition}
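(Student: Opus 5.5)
The plan is to work throughout with the standard Killing identities. For a Killing field $X$ we set $A_X:=\nabla X$, which is $g$-skew, and use
\[\nabla_Y A_X=R(Y,X),\qquad [X,Y]=A_YX-A_XY,\qquad \mathcal L_X R=0 .\]
Since $\xi_0$ lies in $\xi_0^\perp$ (it is one of the spanning fields), $\xi_0$ is null. The condition that $\xi_0^\perp$ is spanned by the $\xi_i$ means it is a distribution of rank $n+1$. It is integrable because the span is closed under brackets.

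\textbf{Step 1: $\xi_0$ is parallel.} Write $A_0=\nabla\xi_0$. From $[\xi_0,\xi_i]=0$ we get $A_0\xi_i=\nabla_{\xi_i}\xi_0=\nabla_{\xi_0}\xi_i=A_i\xi_0$. Skewness of $A_0$ and $A_i$ then gives $g(A_0\xi_i,\xi_j)=g(\nabla_{\xi_i}\xi_j,\xi_0)$. Applying this with $i,j$ swapped and using $g([\xi_i,\xi_j],\xi_0)=0$ (the bracket is a multiple of the null field $\xi_0$) gives $g(A_0\xi_i,\xi_j)=-g(A_0\xi_i,\xi_j)$. Hence $A_0\xi_i\perp\xi_0^\perp$, i.e.\ $A_0\xi_i=\lambda_i\xi_0$.

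To show $\lambda_i=0$, I differentiate $g(\xi_i,\xi_0)=0$ along an arbitrary $Z$. This gives $g(\nabla_Z\xi_i,\xi_0)+g(\xi_i,A_0Z)=0$. Rewriting both terms via skewness and $A_i\xi_0=A_0\xi_i=\lambda_i\xi_0$, they become $-2\lambda_i g(\xi_0,Z)=0$. Choosing $Z$ with $g(\xi_0,Z)\neq0$ gives $\lambda_i=0$. Thus $A_0$ kills $\xi_0^\perp$. By skewness, $A_0Z\perp\xi_0^\perp$ for every $Z$, so $A_0Z\in\langle\xi_0\rangle$, and $g(A_0Z,Z)=0$ forces $A_0=0$. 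In particular $R(\cdot,\cdot)\xi_0=0$.

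\textbf{Step 2: the leaves of $\xi_0^\perp$ are flat.} Let $h_{jk}=g(\xi_j,\xi_k)$. Since the $\xi_l$ are Killing and their brackets lie in $\langle\xi_0\rangle\subset\xi_0^\perp{}^{\perp}$, we have $\xi_l(h_{jk})=g([\xi_l,\xi_j],\xi_k)+g(\xi_j,[\xi_l,\xi_k])=0$. So the $h_{jk}$ are constant along the leaves. The Koszul formula then gives $g(\nabla_{\xi_i}\xi_j,\xi_k)=0$ for all $i,j,k\ge 0$. Combined with $g(\nabla_{\xi_i}\xi_j,\xi_0)=\pm g(A_0\cdot,\cdot)=0$ from Step 1, this shows $\nabla_{\xi_i}\xi_j\in(\xi_0^\perp)^\perp=\langle\xi_0\rangle$. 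The leaves are therefore totally geodesic, and the $\xi_i$ are parallel modulo $\xi_0$ along them.

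From this I want $R(X,Y)=0$ for $X,Y\in\xi_0^\perp$. The components $R(\xi_i,\xi_j,\xi_k,\xi_l)$ vanish by Step 2. Components with a $\xi_0$ vanish by Step 1. The remaining components $R(\xi_i,\xi_j,\xi_k,Z)$ with transversal $Z$ are handled by pair symmetry, $R(\xi_i,\xi_j,\xi_k,Z)=R(\xi_k,Z,\xi_i,\xi_j)$. I evaluate the right-hand side via $\nabla_{\xi_k}A_{\xi_i}=R(\xi_k,\xi_i)$, using $A_i\xi_j=\nabla_{\xi_j}\xi_i\in\langle\xi_0\rangle$ and the first Bianchi identity.

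\textbf{Step 3: $\nabla_X R=0$ for $X\in\xi_0^\perp$.} The plan is to use $\mathcal L_{\xi_i}R=0$, which reads $\nabla_{\xi_i}R=-A_i\cdot R$ (derivation action). By Step 2, $A_i$ maps $\xi_0^\perp$ into $\langle\xi_0\rangle$ and, by skewness, maps $TM$ into $\xi_0^\perp$. I then check that $A_i\cdot R=0$ using Steps 1–2: the only surviving curvature components are $R(Z,\cdot,Z,\cdot)$ restricted to $\xi_0^\perp/\xi_0$, and $R(\cdot,\cdot)\xi_0=0$. In the Lorentzian case, this together with Step 1 is exactly definition~(\ref{pwdef}).

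The main obstacle I expect is the bookkeeping in Steps 2–3: correctly using pair symmetry and Bianchi to kill the mixed components with a transversal $Z$, and verifying that the derivation $A_i\cdot R$ vanishes. I would first try to do this in an adapted frame $\xi_0,\xi_1,\dots,\xi_n,Z$ with $g(\xi_0,Z)=1$.
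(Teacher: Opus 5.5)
Your Step 1 is correct, and it is a cleaner route to $\nabla\xi_0=0$ than the paper's. The paper introduces a null transversal $Z$ and runs the Koszul formula on $\mathcal L_{\xi_a}g(Z,\xi_b)=0$; you use only skewness. The first half of your Step 2 ($\nabla_{\xi_a}\xi_b\in\langle\xi_0\rangle$ via Koszul) and your Step 3 ($\nabla_{\xi_i}R=\pm A_i\cdot R$ with $A_i(\xi_0^\perp)\subset\langle\xi_0\rangle$ and $A_i(TM)\subset\xi_0^\perp$) also match the paper.

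\textbf{The gap} is in the one step that carries the real content: the vanishing of the mixed components $R(\xi_i,\xi_j,\xi_k,Z)$. Pair symmetry and the first Bianchi identity are purely algebraic, so they cannot force these to vanish. Everything therefore rests on which instance of the Killing identity you use, and the one you cite is the wrong one. Write $\nabla_{\xi_j}\xi_i=A_i\xi_j=f_{ji}\xi_0$. Then $\nabla_{\xi_k}A_i=R(\xi_k,\xi_i)$, together with $\nabla\xi_0=0$ and $A_i\xi_0=0$, gives exactly
\[
R(\xi_k,\xi_i)\xi_j=\xi_k(f_{ji})\,\xi_0 .
\]
So this identity only restates the claim as $\xi_k(f_{ji})=0$, a derivative you never control. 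Rearranging with Bianchi and pair symmetry leads back to the same quantity. Controlling this derivative is exactly where the paper does its work: it computes $f_{ab}=\tfrac12\bigl(g([\xi_a,\xi_b],Z)-Z(g_{ab})\bigr)$ and shows that the $\xi_c$ annihilate it, using $[Z,\xi_a]\in\xi_0^\perp$.

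\textbf{The fix} within your framework is to differentiate in the transversal direction, i.e.\ use $R(Z,\xi_k)=\nabla_Z A_k$. Then
\[
g(R(Z,\xi_k)\xi_i,\xi_j)=g\bigl(\nabla_Z(A_k\xi_i),\xi_j\bigr)-g(A_kA_iZ,\xi_j)=0+g(A_iZ,A_k\xi_j)=0 .
\]
This holds because $\nabla_Z(A_k\xi_i)=Z(f_{ik})\xi_0\perp\xi_j$, because $A_k\xi_j\in\langle\xi_0\rangle$, and because $g(A_iZ,\xi_0)=-g(Z,A_i\xi_0)=0$. Pair symmetry then gives $g(R(\xi_i,\xi_j)Z,\xi_k)=0$ for every $Z$. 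Hence the skew endomorphism $R(\xi_i,\xi_j)$ takes values in $\langle\xi_0\rangle$, so it kills $\xi_0^\perp$, so it vanishes.

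Alternatively, you can get $\xi_k(f_{ji})=0$ directly from $\mathcal L_{\xi_k}\nabla=0$:
\[
\mathcal L_{\xi_k}(A_i\xi_j)=\nabla_{\xi_j}[\xi_k,\xi_i]+A_i[\xi_k,\xi_j]=0 ,
\]
since both brackets are constant multiples of the parallel field $\xi_0$ and $A_i\xi_0=0$. With either repair your proof is complete, and somewhat shorter than the paper's.
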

\begin{proof} We set $g_{ab}:=g(\xi_a,\xi_b)$ for $a,b=0\,\ldots, n$. 
Since   $[\xi_a,\xi_b]\in \Gamma(\langle  \xi_0\rangle)$ and $\xi_a\in \xi_0^\perp$ we  have that
\[ g([\xi_a,\xi_b],\xi_c)=0\]
so that 
\[0=\mathcal L_{\xi_a} g(\xi_b,\xi_c)=\xi_a(g_{bc}),\] 
since the $\xi_a$ are Killing.
Hence the Koszul formula yields
\[
2g(\nabla_{\xi_a} \xi_b,\xi_c)=0,
\]
for all $a,b,c=0,\ldots n$, so that $\nabla_{\xi_a} \xi_b\in \Gamma(\langle \xi_0\rangle)$. Note also that $[[\xi_a,\xi_b],\xi_c]=0$.

First we show that $\xi_0$ is parallel. 

Fix an arbitrary $p\in M$ and we will show that $\nabla \xi_0|_p=0$. 
Let $Z$ be local null vector field near $p$ such that $g(Z,\xi_0)=1$,  $g(Z,\xi_i)=0$, for $i=1,\ldots, n$.  On the domain of $Z$, the 
assumptions imply that
\begin{equation}
\label{LieZ}
0=\mathcal L_{\xi_a} g(Z,\xi_b)=-g([\xi_a,Z], \xi_b)- g([\xi_a,\xi_b],Z).\end{equation}
For $a=0$ or $b=0$ this implies 
\[0=\mathcal L_{\xi_a} g(Z,\xi_b)=g([\xi_a,Z], \xi_0),\qquad  0=\mathcal L_{\xi_0} g(Z,\xi_a)= g([\xi_0,Z], \xi_a),\]
so that the Koszul formula gives
\[g(\nabla_{\xi_a}\xi_0,Z)=0,\qquad g(\nabla_{Z}\xi_0,\xi_a)=0,\]
and hence $\nabla_{\xi_a}\xi_0=0$ and $\nabla_{Z}\xi_0\in \rangle \xi_0\langle $. Finally, since $\xi_0$ is Killing $g(\nabla_{Z}\xi_0,Z)=0$, so that $\xi_0$ is parallel on the domain of $Z$ and hence everywhere. Note that, since $[\xi_a,\xi_b]=c_{ab} \xi_0$ with constants $c_{ab}$,  this also implies 
\[\nabla [\xi_a,\xi_b]=0,\]
and we also have that $[Z,\xi_0]=0$.

Next, we need to show that $R(\xi_a,\xi_b)=0$, where $R\in \Lambda^2\otimes \so(TM)$ is the Riemannian curvature tensor. 
This is equivalent to $R(\xi_a,\xi_b)\xi_c=0$, so we need to determine $\nabla_{\xi_a}\xi_b$. By~(\ref{LieZ})
and the Koszul formula we get
\[2g(\nabla_{\xi_a} \xi_b ,Z)=-Z(g_{ab}) +g([\xi_a,\xi_b],Z),\]
and consequently
\[ \nabla_{\xi_a} \xi_b =\tfrac{1}{2}\left( g([\xi_a,\xi_b],Z)-Z(g_{ab}) \right)\xi_0.\]
By $g([Z,\xi_a],\xi_0)=0$ it is $ [Z,\xi_a]\in \xi_0^\perp$, so that
\[[Z,\xi_a](g_{bc}) =0\] 
and hence $\xi_a(Z(g_{bc})=0$.
With $\xi_0$ being parallel, this implies that
\[\nabla_{\xi_a}\nabla_{\xi_b} \xi_c=\tfrac12 \xi_a \left(g([\xi_b,\xi_c],Z)\right) \xi_0
=\tfrac12 \left(g([\xi_a,[\xi_b,\xi_c]],Z)\right) \xi_0=0,
\]
because  $g([\xi_b,\xi_c],[\xi_a,Z]) =0$. In addition, with $[\xi_a,\xi_b]$ parallel, we have 
\[g(\nabla_{[\xi_a,\xi_b]}\xi_c,Z)=g([[\xi_a,\xi_b],\xi_c],Z)=0,\] so that
$
R(\xi_a,\xi_b)\xi_c = 0$ as required.

Finally, we need to show that $\nabla_{\xi_a}R=0$. This can be achieved by the well-known integrability condition for  a Killing vector field $\xi$,
\[\nabla_{\xi} R= (\nabla \xi)\cdot R ,\]
where the dot denotes the action of $\so(TM)$ on the curvature tensors. However, the above calculations have shown 
the range of $\nabla \xi |_{\xi_0^\perp}$ is a section of   $\xi_0^\perp \otimes \langle\xi_0\rangle$. Moreover, since  
\[g(\nabla_{Z}\xi_a,\xi_0) = - g(\nabla_{\xi_0}\xi_a,Z)=0,\]
we have that $\nabla_Z \xi_a$ is a section of $ \xi_0^\perp$.
Then $R(\xi_a,\xi_b)=0$ implies that $\nabla_{\xi_a}R=0$ and hence that $\nabla_XR=0$ for all $X\in \xi_0^\perp$. In particular, if $g$ is Lorentzian, then $(M,g)$ is a plane wave.
\end{proof}

In order to prove Theorem~\ref{maintheo}, in the remainder of the article we will deduce the existence of the Killing vector fields as in Proposition~\ref{pwkilling-prop} under the appropriate assumptions.

\section{Isotropy group and reductions}\label{Section: isotropy}

 In this section we will study the isotropy of a locally conformally homogeneous manifold and its relation to weak essentiality.
 Let $(M, g)$ be a   locally conformally homogeneous Lorentzian manifold of dimension 
  $n+2\ge 3$. 
 
 \subsection{The isotropy} Fix $p$, and let $H_p$ be the group of (germs) of local conformal transformations fixing $p$. Here ``germ'' means that one identifies local conformal diffeomorphisms  $f_1 $ and $f_2$,  
 $f_i: U^i_p \to V^i_p$, if they  coincide on some  $W_p \subset U^1_p \cap U^2_p$,  where $U^i_p, V^i_p, W_p$ are neighborhoods of $p$. For $H_p$ we have the    \textit{derivative  representation} or  \textit{isotropy representation} $\rho_p: f \in H_p \to \d f_p \in  \GL(T_pM)$. Clearly the  image of $\rho_p$ is contained in 
$\CO(T_pM)\simeq \CO(1,n+1)= \R^* \times \O(1,n+1)$.  The following observations will be fundamental for our approach.

\begin{proposition}\label{Prop. isotropy} If $(M,g)$ is locally conformally homogeneous, then the following holds:

\begin{enumerate}

\item\label{iness-O} $(M, g)$ is weakly essential if and only if the image of $\rho_p$ is not contained in $\O(T_pM, g_p)$.

\item If $\rho_p(H_p)$ contains $e^\lambda \1$ with $\lambda\not= 0$, then $(M,[g]))$ is conformally flat.

\item If $\rho_p$ is not injective, then 
 $(M, g)$ is conformally flat.

\item  $\rho_p(H_p)$  contains the identity component of its Zariski closure in $  \GL(T_pM) \simeq \GL(n+2,\R)$, contained in $\CO(1,n+1)$.
\end{enumerate}
\end{proposition}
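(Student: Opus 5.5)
We prove the four items separately. They rest on two tools: the homogeneity of the structure, which lets us move data from $p$ to any other point, and Gromov's theory of rigid geometric structures in the analytic setting (Appendix \ref{Gromov's Theory}). In dimension $\ge 3$ a conformal structure is a rigid structure of finite order. It is also real analytic, since a locally conformally homogeneous conformal structure is locally homogeneous, hence analytic in suitable coordinates.

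\emph{Item (1).} Suppose first that $\rho_p(H_p)\subset \O(T_pM,g_p)$. We build an invariant metric $g'=e^{2f}g$ as follows. For $q\in M$, choose a local conformal map $\phi$ with $\phi(p)=q$, and set $g'_q:=(\phi^{-1})^*g_p$. This is well defined: two choices differ by an element of $H_p$, whose differential lies in $\O(g_p)$ by assumption. Every local conformal map preserves $g'$ by construction. We still need $g'$ to be smooth, i.e.\ the conformal factor $f$ to be smooth. For this we use that locally one can choose $\phi$ depending smoothly on $q$. This follows from the local Lie algebra of conformal Killing fields acting locally transitively near $p$ (Gromov/Singer: in the locally homogeneous analytic setting, Killing fields of the rigid structure realize the transitivity). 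Conversely, if some $g'$ in the class is preserved, then every $\d f_p$ with $f\in H_p$ is a $g'_p$-isometry, and $g'_p$ is a constant multiple of $g_p$, so $\d f_p\in \O(g_p)$.

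\emph{Items (2) and (3).} We use the conformal Weyl tensor $W$ when $n+2\ge 4$, and the Cotton tensor in dimension $3$. Both transform with a definite nonzero conformal weight under $\d f_p$. For item (2), take $f\in H_p$ with $\d f_p=e^\lambda\1$ and $\lambda\ne 0$. Then $W_p=f^*W_p=e^{k\lambda}W_p$ for some weight $k\ne 0$, so $W_p=0$ (respectively $C_p=0$). By homogeneity the tensor vanishes everywhere, hence $[g]$ is conformally flat. For item (3), suppose $f\ne \mathrm{id}$ as a germ but $\d f_p=\1$. By rigidity of finite order (the conformal $2$-jet determines $f$), the $2$-jet of $f$ at $p$ is then a nontrivial element of the first prolongation $\R^{n+2\,*}$. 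We compute its action on the weighted Weyl/Cotton tensor, or equivalently consider the subgroup it generates together with its Zariski closure. Such an element acts unipotently, and nontriviality forces the harmonic curvature to lie in a fixed subspace that must be zero. I expect this step to be the most delicate. The cleanest route is Frances' argument, also used by Alekseevsky--Galaev: a conformal map fixing $p$ with trivial differential produces, via its exponential-type dynamics on lightlike geodesics, a conformally flat open set. Homogeneity then spreads conformal flatness to all of $M$.

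\emph{Item (4).} This is where Gromov's theory is essential. Let $k$ be large enough that the isotropy is determined at order $k$. Consider the action of the jet group $D^{k}$ on the space of $k$-jets of conformal structures, or equivalently on the finite-dimensional space of the relevant curvature jets. By Gromov's Frobenius-type theorem (Appendix \ref{Gromov's Theory}), the algebraic stabilizer $\mathrm{Stab}(j^k_p g)$ is realized by germs of local conformal maps. In other words, $H_p$ surjects onto this stabilizer, which is an algebraic group. By item (3), or simply by restricting to the case of injective $\rho_p$, we identify $\rho_p(H_p)$ with the projection of this algebraic group to $\GL(T_pM)$. A projection of an algebraic group under an algebraic morphism has image containing the identity component of its Zariski closure, by Chevalley's theorem. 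Containment in $\CO(1,n+1)$ is clear, since $\CO(1,n+1)$ is Zariski closed. The main obstacle is ensuring that $H_p$ really equals the full stabilizer of a finite-order jet. This is precisely what the integrability theorem of Gromov (Nomizu--Singer in the analytic locally homogeneous case) provides.
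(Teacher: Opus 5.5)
Your items (1), (2) and (4) follow the paper's proof. For (1) you transport $g_p$ by local conformal maps, and you also justify smoothness of the resulting metric using finitely many local conformal fields spanning $T_pM$, which the paper leaves implicit. For (2) you use the weight argument at a point where $\mathrm{d}f_p=e^\lambda\mathbf{1}$. For (4) you combine Gromov/Pecastaing algebraicity of the local isotropy with the fact that the image of a real algebraic group under an algebraic morphism contains the identity component of its Zariski closure.

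Your Cotton-tensor variant in (2) is in fact needed in dimension $3$. There the Weyl tensor vanishes identically, so the paper's Weyl argument gives nothing. In (4), neither item (3) nor injectivity of $\rho_p$ is needed: $\rho_p$ factors through the projection of the jet group onto $\mathrm{GL}(T_pM)$ in any case. Also, over $\R$, ``Chevalley'' has to be read as the real refinement proved in the paper's lemma on algebraic images. Real images need not be Zariski closed; think of squaring on $\R^*$.

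Item (3) is where you genuinely diverge. The paper argues in three steps:
\begin{itemize}
\item algebraicity of $H_p$ shows that $\ker\rho_p$ has finitely many components;
\item a finite kernel is trivial, because a finite group of germs preserves a Riemannian metric, so its elements are linearizable and determined by their derivative;
\item the Frances--Melnick dichotomy for conformal flows with bounded derivative at a fixed point \cite[Theorem 1.4]{FM2013} is applied to a one-parameter subgroup of $(\ker\rho_p)^0$, and trivial derivative rules out the inessential alternative.
\end{itemize}

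You instead apply a dynamical result directly to one nontrivial germ $f$ with $\mathrm{d}f_p=\mathbf{1}$. This bypasses the algebraic reduction and the flow theorem. It does, however, need the discrete analogue of the dichotomy: if $\{\mathrm{d}_pf^k\}$ is bounded and $f$ is not linearizable near $p$, then there is a conformally flat open set $U$ with $p\in\overline{U}$. You should state and cite this precisely, not just invoke ``Frances' argument''. Also note explicitly that $f$ cannot be linearizable, since a linearizable germ with trivial derivative is the identity.

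The pointwise computation you sketch first would not work. A germ with trivial $1$-jet acts on the Weyl tensor (the Cotton tensor in dimension $3$) at $p$ only through $\mathrm{d}f_p=\mathbf{1}$, because the first prolongation acts trivially on harmonic curvature. So invariance yields no constraint at $p$. The conclusion is genuinely dynamical and gives flatness only on an open set accumulating at $p$. That is why homogeneity is then needed to spread it to all of $M$.
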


\begin{proof}
\begin{enumerate}
\item If $\rho_p(H_p) \subset \O(T_pM)$, one can define a metric as follows. By local homogeneity, for any $q \in M$ there exists a local conformal transformation  $\phi: V_p\to V_q $ between open neighborhoods of $p$ and $q$, such that $\phi(p)=q$. We then  define $\hat g_q(\d \phi_p(X),\d \phi_p Y):=g_p(X,Y)$ for $X$ and $Y$ in $T_pM$. Since $\rho_p(H_p) \subset \O(T_pM)$, this definition is independent of the choice of $\phi$, and hence determines a well-defined metric $\widehat{g}$ on $M$. By construction, $\widehat{g}$ is invariant under all local conformal transformations. The converse is immediate: if $f \in H_p$ is an isometry of some $\widehat{g}$ in the conformal class, with $f(p) = p$, then $(f^*g)_p = g_p$, and therefore $\mathrm{d}f_p \in \mathrm{O}(T_pM, g_p)$.

\item If $f \in H_p$ satisfies $d f_p = e^{\lambda} \1$, then for the $(1,3)$ Weyl tensor we have $ W_p = f^* W_p = e^{2\lambda} W_p$, so that $W_p = 0$ if $\lambda \neq 0$.
The statement then follows from conformal local homogeneity.
 
\item   The isotropy group $H_p$ is algebraic. This is a consequence of Gromov’s theory of rigid transformation groups, as explained in Appendix~\ref{Gromov's Theory}. Since $\ker \rho_p$ is a closed subgroup of $H_p$, it is also algebraic. In particular, it has finitely many connected components.
If the kernel is finite, then it must be trivial. Indeed, a finite group preserves a Riemannian metric; hence any element of the kernel is linearizable, that is, conjugate via the exponential map to its derivative. Therefore, such an element cannot have trivial derivative at $p$ unless it is itself trivial.
As a result, if $\rho_p$ is not injective, then the identity component $(\ker \rho_p)^0$ is non-trivial. Let $\phi^t \in (\ker \rho_p)^0$ be a non-trivial one-parameter subgroup such that $\rho_p(\phi^t)=\1$.
 Then, the set $\{ d_p \phi^t \}$ is relatively compact in $\CO(T_pM)$.
By \cite[Theorem 1.4]{FM2013}, which applies in dimension $\ge 3$, the flow is either inessential or the metric is conformally flat. 
However, since $\phi^t$ has trivial derivative, it must be essential. It follows that the metric is conformally flat.

 \item In the locally homogeneous case, for any $p\in M$ the isotropy group $H_p$ is identified with an algebraic subgroup $H$ of some $\GL(m,\R)$, and the derivative representation is identified with an algebraic homomorphism from $H$ to $\GL(T_p M) \simeq \GL(n+2,\R)$ (see Appendix~\ref{Gromov's Theory}). Then Lemma \ref{Lem: algebraic image} below implies that $\rho_p(H_p)$ contains the identity component of its Zariski closure in $\GL(n+2,\R)$, which is contained in $\CO(1,n+1)$ since the latter is Zariski closed in $\GL(n+2,\R)$.
\end{enumerate}
\end{proof}

 \begin{lemma}\label{Lem: algebraic image}
  Let $H$ be an algebraic subgroup of $\GL(m,\R)$, and $\rho: H \to \GL(n,\R)$ an algebraic homomorphism over $\R$. Then, $\rho(H)$ has finite index in its Zariski closure in $\GL(n,\R)$. In other words, their identity components coincide. 
 \end{lemma}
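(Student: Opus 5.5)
We reduce to the complex algebraic setting, where images of algebraic groups under morphisms are closed, and then return to real points. Let $\mathbf H\subset \GL_m$ be the Zariski closure of $H$, an algebraic group defined over $\R$ with $H=\mathbf H(\R)$. Since $\rho$ is algebraic and defined over $\R$, it is a morphism of algebraic groups $\rho:\mathbf H\to \GL_n$ over $\R$. By the standard closed-image theorem for algebraic groups, $\mathbf G:=\rho(\mathbf H)$ is a Zariski closed subgroup of $\GL_n$, defined over $\R$. Moreover, $\rho(H)$ is Zariski dense in $\mathbf G$, because $H$ is Zariski dense in $\mathbf H$. Hence the Zariski closure of $\rho(H)$ in $\GL(n,\R)$ is $\mathbf G(\R)$.

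It therefore suffices to prove that $\rho(\mathbf H(\R))$ has finite index in $\mathbf G(\R)$. We argue at the Lie algebra level, working with Lie groups in the analytic topology.

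\textbf{Openness.} The differential $d\rho:\mathrm{Lie}(\mathbf H)\to \mathrm{Lie}(\mathbf G)$ is surjective over $\C$, since $\rho:\mathbf H\to\mathbf G$ is a surjective morphism in characteristic zero and is therefore separable. As $d\rho$ is defined over $\R$, its restriction to real points, $\mathfrak h_\R\to \mathfrak g_\R$, is also surjective. Consequently $\rho(H^0)$ is an open subgroup of $\mathbf G(\R)$, and so it contains $\mathbf G(\R)^0$. Here $H^0$ and $\mathbf G(\R)^0$ denote identity components in the Euclidean topology.

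\textbf{Finiteness.} By Whitney's theorem, real algebraic groups have finitely many connected components in the Euclidean topology, so $\mathbf G(\R)/\mathbf G(\R)^0$ is finite. It follows that $\rho(H)\supset \mathbf G(\R)^0$ has finite index in $\mathbf G(\R)$.

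Finally, the identity component of $\mathbf G(\R)$, in either the Zariski or the Euclidean sense, is therefore contained in $\rho(H)$. Since $\rho(H)$ is a subgroup of finite index, the two identity components coincide, which gives the ``in other words'' formulation.

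The main obstacle is to justify carefully that the real points behave well. The image of the real points need not equal the real points of the image; for instance, $x\mapsto x^2$ on $\mathbb G_m$ has image $\R_{>0}$, not $\R^*$. The Lie algebra surjectivity argument together with Whitney's finiteness is exactly what shows that this discrepancy is confined to finitely many components.
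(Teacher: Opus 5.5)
Your argument is correct and follows essentially the paper's route. Both proofs pass to the complex Zariski closures and use that $\rho_{\C}(H^{\C})$ is Zariski closed. Both then use surjectivity of the differential, together with its compatibility with complex conjugation, to show that $\rho(H)$ and its real Zariski closure have the same Lie algebra, hence the same Euclidean identity component. You also make explicit two points the paper leaves implicit: surjectivity of $d\rho$ in characteristic $0$, and Whitney's finiteness, which gives finite index.

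One correction: delete ``in either the Zariski or the Euclidean sense'' from your last paragraph, because your own example refutes it. For $x\mapsto x^2$, the group $\mathbf{G}(\R)=\R^*$ is Zariski-connected but is not contained in $\rho(H)=\R_{>0}$. The lemma concerns only the Euclidean identity component, and your argument establishes exactly that.
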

 
 \begin{proof}
Given a subgroup $G \subset \GL(n,\R)$, we denote by $G^0$ its identity component, by $\mathrm{Lie}(G)$ its Lie algebra, and by $G^\C$ its Zariski closure in $\GL(n,\C)$. If $G$ is an algebraic subgroup of $\GL(n,\R)$, then by \cite[Remark, p. 285]{EJ09} one has
 \begin{equation}\label{Eq-Idcomp}
     G^0 = G^\C (\R)^0,
 \end{equation}
where $G^\C(\R)=G^\C \cap \GL(n,\R)$.   Moreover, 
 \begin{equation}\label{Eq-Lie}
     \mathrm{Lie}(G)=\mathrm{Lie}(G^\C) \cap \gl(n,\R).
 \end{equation} 
 The homomorphism $\rho$ extends to an algebraic homomorphism over $\R$, $$\rho_\C: H^\C \to \GL(n,\C).$$
Its differential $d \rho$ which is an $\R$-linear map extends   by complexification to a $\C$-linear map. 
  Since $H$ is algebraic,  by (\ref{Eq-Idcomp}), its identity component satisfies    $H^0 = H^\C (\R)^0$. 
  Since the image of a complex algebraic group under an algebraic homomorphism is Zariski closed (see for instance \cite[Paragraph 7.4, Proposition B]{Hum} or \cite[Proposition 1.2.5 p. 11]{Perr}), it follows that $G:=\rho_\C(H^\C)$ is an algebraic subgroup  of $\GL(n,\C)$.  
  In fact, $G=\rho_\C(H^\C)=\rho(H)^\C$. 
 Let $R$ be the Zariski closure of $\rho(H)$ in $\GL(n,\R)$. Then $R$ is an algebraic group whose Zariski closure in $\GL(n,\C)$ is equal to the Zariski closure of $\rho(H)$ in $\GL(n,\C)$, hence to $G$. 
 Thus, by (\ref{Eq-Idcomp}), we have $R^0= G(\R)^0$.  Now, (\ref{Eq-Lie}) yields $\mathrm{Lie}(\rho(H))=\d\rho(\mathrm{Lie}(H))=\d\rho (\mathrm{Lie}(H^\C) \cap \gl(m,\R))$ and  $\mathrm{Lie}(R)=\mathrm{Lie}(G(\R))= \mathrm{Lie}(\rho_\C(H^\C)) \cap \mathfrak{gl}(n,\R)$. 
 Since $\rho$ is defined over $\R$, the map $d \rho_\C$
 commutes with complex conjugation, hence $\d\rho_\C(\mathrm{Lie}(H^\C)) \cap \gl(n,\R)=\d \rho (\mathrm{Lie}(H^\C) \cap \gl(m,\R))$. 
 It follows that $\mathrm{\rho(H)}=\mathrm{Lie}(R)$, and hence $\rho(H)$ and $R$ have the same identity component. 
 \end{proof}

The fact that the image of $H_p$ under $\rho_p$ contains the identity component of its Zariski closure in $ \CO(1,n+1)$ means that its Lie algebra is the Lie algebra of an algebraic subgroup of $\CO(1,n+1)$. This observation 
 is crucial in what follows.  
 The reason is that we can use the following fact from the theory of algebraic groups, which we formulate for subgroups of $\CO(1,n+1)$. For a proof, we refer to \cite[Theorem 15.3 p. 99]{Hum}, or \cite[Theorem 3.1.6 p. 30]{Perr}.

\begin{lemma}[Jordan decomposition in algebraic groups]\label{Jordan-lem}
Let $\H \subset \CO(1,n+1)$ be an algebraic group, and let $\h$ be its Lie algebra. 
\begin{enumerate}
\item Let $B \in \H$.
Then there exist (unique) commuting elements $B_s, B_u \in \H$ such that $B = B_s B_u$, where $B_s$ is semisimple (i.e.~diagonalizable over $\C$) and $B_u$ is unipotent. 
In particular, $B_u \in \H \cap \O(1,n+1)$. (This is called the Jordan decomposition of $B$ in $\H$). 
\item Let $X \in \h$. Then there exist unique elements $X_s, X_u \in \h$ such that $X=X_s + X_u$, and $e^{tX} = e^{t X_s} e^{t X_u}$ is the Jordan decomposition of $e^{tX}$ in $\H$. 
\end{enumerate}

Moreover, there exist commuting elements $B_h, B_e \in \H$ such that $B_s = B_h B_e$, where $B_h$ is hyperbolic (i.e. diagonalizable, with only real eigenvalues) and $B_e$ is elliptic (i.e.~all eigenvalues lie on $\S^1$).
In particular, $B_e \in \H \cap \O(1,n+1)$.
\end{lemma}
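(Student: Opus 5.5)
The plan is to reduce everything to the standard Jordan decomposition in $\GL(n+2,\C)$ and then use that $\H$ is Zariski closed. Each statement will be checked against the form of $\CO(1,n+1)=\R^*\times\O(1,n+1)$.

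For (1), take $B\in\H\subset\GL(n+2,\R)$. The multiplicative Jordan decomposition $B=B_sB_u$ in $\GL(n+2,\R)$ exists and is unique. Moreover, $B_s$ and $B_u$ are polynomials in $B$ with real coefficients, since they are fixed by complex conjugation by uniqueness. The standard fact that algebraic groups contain the Jordan components of their elements (Humphreys, Thm.~15.3) will be invoked. Its proof goes as follows: $B_s$ and $B_u$ lie in the Zariski closure of $\{B^k\}$, because the semisimple and unipotent parts of the action on polynomial functions of bounded degree are polynomials in the action of $B$, and these preserve the ideal of $\H$. Hence $B_s,B_u\in\H$.

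Next, write $B=\lambda A$ with $\lambda\in\R^*$ and $A\in\O(1,n+1)$. Then $B_u$ is unipotent and lies in $\CO(1,n+1)$, so $B_u=\mu A'$. Its determinant $\mu^{n+2}=1$ and its eigenvalues all equal $1$ force $\mu=1$ (the eigenvalues of $\mu A'$ multiply to $\mu^{n+2}$, and $\mu>0$ would follow if $\mu=\pm1$ with all eigenvalues $1$). Therefore $B_u\in\O(1,n+1)$.

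For (2), take $X\in\h$ and apply the additive Jordan decomposition $X=X_s+X_u$ in $\gl(n+2,\R)$. It remains to show $X_s,X_u\in\h$. Here I would use the fact that $\h$ is the Lie algebra of the algebraic group $\H$. The one-parameter group $e^{tX}$ lies in $\H$, and applying (1) to it, its Jordan parts $e^{tX_s}$ and $e^{tX_u}$ lie in $\H$ for all $t$. Differentiating at $t=0$ gives $X_s,X_u\in\h$. Since $X_s$ and $X_u$ commute, the identity $e^{tX}=e^{tX_s}e^{tX_u}$ is the Jordan decomposition, and uniqueness follows from uniqueness in $\gl$.

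For the final claim, work in a complex eigenbasis of $B_s$. Define $B_h$ by replacing each eigenvalue $\lambda$ by $|\lambda|$, and $B_e$ by replacing it by $\lambda/|\lambda|$. Both are real matrices, because conjugate eigenvalues are treated consistently. Both commute with $B$, and they are polynomials in $B_s$.

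The main obstacle is showing $B_h,B_e\in\H$. For this I would use the diagonalizable (split) torus structure. The Zariski closure $T$ of $\langle B_s\rangle$ is a commutative group of semisimple elements, and its ideal is generated by the multiplicative relations among the eigenvalues $\lambda_i$, namely $\prod\lambda_i^{k_i}=1$. Any such relation implies $\prod|\lambda_i|^{k_i}=1$ and $\prod(\lambda_i/|\lambda_i|)^{k_i}=1$. Hence $B_h,B_e\in T\subset\H$.

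Finally, $B_e$ is elliptic and lies in $\CO(1,n+1)$, so $B_e=\mu A$ with $|\mu|=1$, which gives $\mu=\pm1$. Since $-\1\in\O(1,n+1)$, it follows that $B_e\in\O(1,n+1)$.
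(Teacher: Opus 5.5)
Your proposal is correct. The paper gives no argument of its own here: it only cites Humphreys, Thm.~15.3, and Perrin, Thm.~3.1.6. Your treatment of (1) rests on the same citation.

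\textbf{Where you differ from the cited sources.}
\begin{enumerate}
\item Your derivation of (2) applies (1) to $e^{tX}$ and uses that $\H$ is a closed Lie subgroup. This is a valid real-Lie-group shortcut. Humphreys instead proves the additive decomposition in the algebraic Lie algebra directly.
\item For the hyperbolic/elliptic splitting you go beyond what the paper provides. The cited theorems work over algebraically closed fields and do not literally contain this real refinement. Your argument via characters of the diagonalizable group $\overline{\langle B_s\rangle}$ fills that in.
\end{enumerate}

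\textbf{Points to tidy.}
\begin{enumerate}
\item Your sketch of Humphreys' proof in (1) omits its key step. That step is that the semisimple part of right translation by $B$ on the coordinate ring is right translation by $B_s$. Since you cite the theorem, this is harmless.
\item The determinant computation in (1) is off. You have $\det(\mu A')=\mu^{n+2}\det A'$ with $\det A'=\pm1$, so you only get $|\mu|=1$. In fact $\mu$ is not determined, e.g.\ $\1=(-1)(-\1)$. However, $|\mu|=1$ already gives $B_u\in\O(1,n+1)$, because $-\1\in\O(1,n+1)$. This is exactly how you argue for $B_e$.
\item A cleaner route for both ``in particular'' claims uses the conformal factor. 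Define $c(B)>0$ by $B^\top\eta B=c(B)\eta$; positivity holds since the signature $(1,n+1)$ is not symmetric. Then $c(B)^{n+2}=\det(B)^2$, so $c=1$ whenever $|\det B|=1$.
\item In the torus step, make the passage between complex and real closures explicit.
\begin{itemize}
\item The character description applies to the complex Zariski closure $T_\C$ of $\langle B_s\rangle$ inside the conjugated diagonal torus. There, closed subgroups are intersections of kernels of characters. This is more precise than saying ``its ideal is generated by the multiplicative relations.''
\item $B_h$ and $B_e$ are real points of $T_\C$.
\item Every real polynomial vanishing on $\langle B_s\rangle$ also vanishes on $T_\C$. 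Hence $T_\C\cap\GL(n+2,\R)$ lies in the real Zariski closure of $\langle B_s\rangle$.
\item That real closure is contained in $\H$, since $B_s\in\H$ by (1) and $\H$ is Zariski closed.
\end{itemize}
\end{enumerate}
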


Applying this to the image $\H_p$ of the linear isotropy representation yields the following result.
 
\begin{corollary} \label{essential-cor}
Assume that $(M, g)$ is weakly essential  and locally conformally homogeneous. Then 
\begin{enumerate}
\item The  image $\H_p := \rho_p(H_p) \subset \CO(T_pM, g_p)$ of $H_p$ contains the identity component of its Zariski closure in $\CO(T_p M) \simeq \CO(1,n+1)$, and it is not contained in $\O(T_pM, g_p)$.

\item ${\bf H}_p$ has finitely many connected components. In particular, its Lie algebra $\h_p \subset \co(T_pM)=\R \+  \so(T_pM, g_p) \subset \gl(T_pM)$ is non-trivial, and it is not contained in $\so(T_p M,g_p)$.

\item  ${\bf H}_p$ contains a  one-parameter subgroup $B^t \in  \CO(T_pM, g_p)$ of the form $B^t= \e^{t \alpha} \e^{t A}$, where $\alpha >   0$, and 
 $A \in \so(T_pM, g_p)$ is an infinitesimal (nontrivial) boost, i.e.~$A$ is hyperbolic with eigenvalues $\pm 1$ and  $0$. Up to permutation of the basis, $A=\diag(1,-1,0,\ldots,0)$,
 so that $B^t= e^{t \alpha} \diag{(e^t, e^{-t}, 1, \ldots, 1)}$.
\end{enumerate}
\end{corollary}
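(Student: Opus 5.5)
Item (1) follows from parts (1) and (4) of Proposition~\ref{Prop. isotropy}. Weak essentiality means, by part (1), that $\H_p\not\subset\O(T_pM,g_p)$. By part (4), $\H_p$ contains the identity component of its Zariski closure $\overline{\H_p}$, which lies in $\CO(1,n+1)$. For item (2), let $\H:=\overline{\H_p}$. It is a real algebraic group, so it has finitely many connected components. Since $\H^0\subset\H_p\subset\H$, the group $\H_p$ is a union of components of $\H$, so it also has finitely many components, and its Lie algebra $\h_p$ equals $\mathrm{Lie}(\H)$.

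To see that $\h_p\not\subset\so(T_pM)$, argue by contradiction. If $\h_p\subset\so$, then $\H^0\subset\O(T_pM)$. Consider the homomorphism $\H\to\R_{>0}$, $B\mapsto|\det B|^{1/(n+2)}$, which records the conformal factor. Its image would be a finite subgroup of $\R_{>0}$, hence trivial, so $\H_p\subset\O(T_pM)$, contradicting item (1). The same argument shows that $\h_p$ is non-trivial.

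For item (3), the main step, I will use the Jordan decomposition of Lemma~\ref{Jordan-lem}. Take $X\in\h_p$ with $X\notin\so$ and write $X=c\1+Y$ with $c\neq0$, $Y\in\so$. We have $X_s,X_u\in\h_p$, and $X_u$ is nilpotent, so it is traceless and conformal, hence $X_u\in\so$. Therefore $X_s=c\1+Y_s$ with $Y_s\in\so$ semisimple. Next split $X_s$ into commuting hyperbolic and elliptic parts, both lying in the algebraic Lie algebra $\h_p$. This is the Lie algebra version of the last part of Lemma~\ref{Jordan-lem}: the real split part of a semisimple element of an algebraic Lie algebra stays in it. The elliptic part lies in $\so$, so the hyperbolic part is $X_h=c\1+Y_h$ with $Y_h\in\so(1,n+1)$ hyperbolic. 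Replacing $X$ by $\pm X$ we may assume $c>0$.

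Now consider $Y_h$. If $Y_h=0$, then $e^{tc}\1\in\H_p$, and part (2) of Proposition~\ref{Prop. isotropy} makes $(M,g)$ conformally flat. So I expect the statement to assume implicitly that we are in the non-conformally-flat situation, or else the boost is produced only in that case. Assuming $Y_h\neq0$, a hyperbolic element of $\so(1,n+1)$ has real eigenvalues. Its eigenvectors for non-zero eigenvalues are null, and since the maximal totally isotropic subspaces are lines, only one pair $\pm\mu$ can occur. Thus $Y_h$ is conjugate to $\mu\,\diag(1,-1,0,\ldots,0)$ with $\mu>0$.

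Finally, rescale: $X_h/\mu$ gives $B^t=e^{t\alpha}e^{tA}$ with $\alpha=c/\mu>0$ and $A=\diag(1,-1,0,\dots,0)$. The main obstacle is the Lie-algebra version of the hyperbolic/elliptic splitting inside $\h_p$, together with handling the degenerate case $Y_h=0$ through Proposition~\ref{Prop. isotropy}(2).
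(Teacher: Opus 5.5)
Your argument follows the paper's route. Item (1) comes from Proposition~\ref{Prop. isotropy}, item (2) from finiteness of components, and item (3) from Lemma~\ref{Jordan-lem}. In item (2), your homomorphism $B\mapsto|\det B|^{1/(n+2)}$ is an equivalent substitute for the paper's remark that some power $f^k$ of an element $f\in\H_p\setminus\O(T_pM,g_p)$ lies in $\H_p^0$. The step you call the main obstacle is in fact immediate from the group version of Lemma~\ref{Jordan-lem}. The hyperbolic part of $e^{tX_s}$ is $e^{tX_h}$, so $e^{tX_h}\in\H$ for all $t$, and hence $X_h\in\h_p$.

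The genuine gap is the case $Y_h=0$. You leave it open by guessing that the statement tacitly excludes conformally flat manifolds, but no such assumption is needed. If $Y_h=0$, then $e^{tc}\1\in\H_p$, so $(M,g)$ is conformally flat by Proposition~\ref{Prop. isotropy}(2). Now take a conformal chart $\phi$ from a neighbourhood of $p$ into Minkowski space with $\phi(p)=0$. For every linear $L\in\CO(1,n+1)$, the germ $\phi^{-1}\circ L\circ\phi$ is a local conformal transformation fixing $p$. Hence $\H_p=\CO(T_pM,g_p)$, which contains $e^{t\alpha}e^{tA}$ for every $\alpha>0$. Adding this step proves item (3) as stated.

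The paper's own proof of (3) is a one-line appeal to Lemma~\ref{Jordan-lem} and is also silent on this degenerate case. It gets away with this because $B^t$ is used afterwards only when $(M,g)$ is not conformally flat (compare the standing assumption $\g^0=\R B\oplus\h^{00}$ in Section~\ref{Section: Heis algebra}).
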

 
\begin{proof} 
\begin{enumerate}
\item This is a consequence of Proposition \ref{Prop. isotropy}. 
\item Since $H_p$ is an algebraic group (see Appendix \ref{Gromov's Theory}), it has finitely many connected components; this is a general property of algebraic groups (see, for example, \cite[Proposition 7.3, p. 53]{Hum} or \cite[Proposition 1.2.1, p. 10]{Perr}). Hence, the same holds for its image  $\H_p=\rho(H_p)$. Moreover, by (1), $\H_p$ is not contained in $\O(T_pM, g_p)$, and therefore is not a finite group. Thus, its Lie algebra is non-trivial.
Now, let $f \in \H_p$ be an element that does not belong to $\O(T_pM, g_p)$. Since $\H_p$ has finitely many connected components, some iterate $f^k$ lies in the identity component $\H_p^0$ of $\H_p$, and still satisfies $f^k \notin \O(T_pM, g_p)$. Hence, the identity component $\H_p^0$ is not contained in $\O(T_p M, g_p)$, 
which implies that its Lie algebra $\h_p$ is not contained in $\so(T_pM, g_p)$.

\item By (2),  $\h_p$ is not contained in $\so(T_pM, g_p)$. Hence, there exists a one-parameter subgroup $B^t$ that is not contained in $\O(T_pM, g_p)$. The given form  of $B^t$ is then a consequence of Lemma \ref{Jordan-lem}.
\end{enumerate}
\end{proof}

\begin{remark} From a dynamical point of view, our conformal one-parameter group  fixing $p$ and generated by $\alpha \1 +A$ is expanding if $\alpha >1$, 
has a central manifold of dimension 1 if $\alpha = 1$, and is hyperbolic with a stable manifold of dimension 1 if $\alpha <1$. 
We will however treat all these cases uniformly, by showing that they lead to a plane wave structure or conformal flatness. 
\end{remark}
 
\subsection{Reduction to parabolic isotropy} 
Recall that $\so(1,n+1)$ is a $|1|$-graded Lie algebra when decomposed into the eigenspaces of $\mathsf{ad}_A$, where $A$ is the grading involution $A=\diag{(1,-1,0,\ldots, 0)}$,
\begin{equation}
\label{grading}
\so(1,n+1)= \s^{-1}\+\s^0\+\s^{1}, \quad \text{ with }\s^0=\co(n)=\R A\+\so(n),\end{equation}
and with  $(\s^1)^*\simeq \s^{-1}$ via the Killing form of $\so(1,n+1)$, both isomorphic as $\co(n)$-modules to $\R^{n}$.
We also split $\R^{1,n+1}$ into the eigenspaces of $A$ as
\[\R^{1,n+1}=V^{-1}\+V^0\+V^{1},\]
so that $\s^{\mu}(V^{\nu})\subset V^{\mu+\nu}$, where we declare $V^\mu=\{0\}$ if $\mu\not\in \{-1,0,1\}$.
 The maximal parabolic subalgebras \[\p^\pm=\s^0\ltimes \s^\pm\]  are the stabilizers of the null lines $V^{\pm1}$, respectively.
On $\s^\mu$, $A^t:=\mathrm{e}^{tA} = \diag{(\e^t, \e^{-t}, 1, \ldots, 1)}$ acts as 
\[
\mathsf{Ad}_{A^t}: \so(1,n+1)\ni X \longmapsto A^tXA^{-t}\in \so(1,n+1),\]
with eigenvalues $\e^{\pm t}$ and $1$. 
Now we have the following.

\begin{lemma}
\label{para-lem}
Let $\H \subset \CO(1,n+1)$ be an algebraic subgroup with Lie algebra $\h$ and assume that $\h$ is not contained in $\so(1,n+1)$. Then either $\h$ contains the identity or 
 $\h$ stabilizes a null line in $\R^{1,n+1}$.
\end{lemma}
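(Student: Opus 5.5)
Since $\h$ is not contained in $\so(1,n+1)$, pick $X\in\h\setminus\so(1,n+1)$ and write $X=\alpha\1+Y$ with $\alpha\neq 0$ and $Y\in\so(1,n+1)$. Because $\H$ is algebraic, Lemma~\ref{Jordan-lem} gives $X_s,X_u\in\h$. The unipotent part $X_u$ is nilpotent, hence trace-free, so it lies in $\so(1,n+1)$. Therefore $X_s=\alpha\1+Y_s\in\h$, where $Y_s$ is the semisimple part of $Y$. Splitting further, the hyperbolic part $X_h=\alpha\1+Y_h$ also lies in $\h$; here $Y_h\in\so(1,n+1)$ is a hyperbolic element. (The decomposition of a semisimple element of an algebraic Lie algebra into real-diagonalizable and imaginary parts stays in $\h$, as in Lemma~\ref{Jordan-lem}.)

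The argument now splits into two cases. If $Y_h=0$, then $\alpha\1\in\h$ with $\alpha\neq0$, so $\h$ contains the identity and we are done. Otherwise $Y_h$ is a nonzero hyperbolic element of $\so(1,n+1)$. Up to conjugation such an element is a boost, so after scaling $Y_h=\beta A$ with $\beta\neq 0$ and $A=\diag(1,-1,0,\ldots,0)$. The element $X_h=\alpha\1+\beta A$ then has the null lines $V^{1}$ and $V^{-1}$ as eigenlines, with the distinct eigenvalues $\alpha\pm\beta$. The goal in this case is to show that $\h$ stabilizes one of these two null lines. So we may assume throughout that $\1\notin\h$ and derive the stabilization.

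For this, decompose $\h$ under $\ad_{X_h}$. Since $\ad_{X_h}=\beta\,\ad_A$ on $\co(1,n+1)=\R\1\oplus\so(1,n+1)$, we get $\h=\h^{-1}\oplus\h^0\oplus\h^{1}$ with $\h^\mu=\h\cap\s^\mu$ for $\mu=\pm1$ and $\h^0\subset\R\1\oplus\s^0$. The key step is to show that $\h^{1}=0$ or $\h^{-1}=0$. Granting that, say $\h^{-1}=0$: then $\h\subset\R\1\oplus\s^0\oplus\s^{1}=\R\1\oplus\p^{+}$, which stabilizes a null line (one of $V^{\pm1}$ according to conventions), as required.

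To prove the key step, suppose $0\neq U\in\h^1$ and $0\neq W\in\h^{-1}$. The bracket $[U,W]$ lies in $\h^0\cap\so$ and has a nonzero component along $A$ in general. More concretely, $U$ and $W$ generate an $\mathfrak{sl}_2$-triple: $[U,W]$ is a nonzero multiple of $A$ plus a term in $\so(n)$. I would first try to show that $\h\ni A'$, a boost conjugate to $A$ (for instance $[U,W]$ itself, after choosing $W$ paired with $U$ appropriately, or by taking its hyperbolic Jordan part, which again lies in $\h$). Subtracting a suitable multiple of $A'$ from $X_h$ then produces $\alpha\1+(\text{something in }\so)$ whose hyperbolic part is $\alpha\1$, giving $\1\in\h$. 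This contradicts our assumption and proves the key step.

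I expect the main obstacle to be the precise bookkeeping in this last step. One must ensure that $[U,W]$ actually has a nonzero component along $A$, and that its hyperbolic part can be compared with $A$ itself rather than with some other boost. If $A$ and $A'$ do not commute, the subtraction would not directly yield a scalar. To handle this I would use the explicit bracket $[U,W]=\langle u,w\rangle A+(u\wedge w)$ in the standard identification $\s^{\pm1}\simeq\R^n$. If $\langle u,w\rangle\neq0$, one obtains an element of $\h$ whose hyperbolic part commutes with $A$ and has a nonzero $A$-component. If instead $\langle u,w\rangle=0$ for all such pairs, then bracketing further with the rotation part lands in $\so(n)$, which is elliptic and harmless; in that situation I would argue through the $\mathfrak{sl}_2$ generated by $\ad$-brackets with $\h^0$ to force a pair with $\langle u,w\rangle\neq0$.
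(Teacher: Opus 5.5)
\paragraph{Verdict.} Your route is genuinely different from the paper's, and your reduction is sound. The key step, however, is not proved, and one of its two cases is left with a sketch that points the wrong way.

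\paragraph{The reduction, compared with the paper.} The paper passes to a Lorentzian subspace $W$ with no nondegenerate invariant proper subspace and invokes Di Scala--Olmos to get $\so(1,k)$. You instead use Lemma~\ref{Jordan-lem} to put a hyperbolic element $X_h=\alpha\1+\beta A$ into $\h$. You then grade $\h=\h^{-1}\oplus\h^0\oplus\h^1$ by $\ad_A$ and reduce the lemma to showing $\h^1=0$ or $\h^{-1}=0$ when $\1\notin\h$. This is correct and more elementary. It even yields $\h\subset\R\1\oplus\p^\pm$ for the grading of the very element used later in the paper.

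\paragraph{The gap.} The key step is the heart of the lemma, and it is only sketched. Your claim that nonzero $U\in\h^1$, $W\in\h^{-1}$ always generate an $\mathfrak{sl}_2$-triple is false. Up to a nonzero normalising constant, $[U,W]=\langle u,w\rangle A+u\wedge w$, so the $A$-component vanishes whenever $u\perp w$, and $[U,W]$ is then a pure rotation. For that case you give only a vague plan. Your remark that ``bracketing further with the rotation part lands in $\so(n)$, which is elliptic and harmless'' misdescribes what happens: the useful bracket lands in $\s^1$, not in $\so(n)$.

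\paragraph{How to close it.} Two cases suffice.
\begin{enumerate}
\item Suppose some pair has $\langle u,w\rangle\neq0$. Then $[U,W]=cA+R$ with $c\neq0$ and $R\in\so(n)$. Since $A$ is central in $\s^0=\R A\oplus\so(n)$, $cA$ and $R$ commute, and $cA$ is exactly the hyperbolic part of $[U,W]$. So Lemma~\ref{Jordan-lem} gives $A\in\h$, hence $\alpha\1=X_h-\beta A\in\h$, a contradiction. No non-commuting conjugate boost $A'$ ever appears.
\item Suppose $u\perp w$ with both nonzero. Then $R:=[U,W]=\pm u\wedge w$ is a nonzero element of $\h$. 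The bracket $[R,U]=e_+\wedge R(u)$ lies in $\h\cap\s^1=\h^1$, and its vector is $R(u)=\mp|u|^2w\neq0$. This vector pairs nontrivially with $W$, so the first case applies to the pair $([R,U],W)$.
\end{enumerate}
With this one extra bracket, your argument becomes a complete, self-contained alternative to the paper's proof.
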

\begin{proof}
Let $W\subset \R^{1,n+1}$ be the orthogonal complement of the largest positive definite subspace of $\R^{1,n+1}$ that is invariant under $\h$. 
If $W$ is $1$-dimensional, then $\h$ is contained in $\R\1\+\so(n+1)$. Since $\H$ is assumed to be algebraic, Lemma~\ref{Jordan-lem} implies that $\1\in \h$. Otherwise,
$W$ is isomorphic to a Minkowski space $\R^{1,k}$, with $1\le k\le n+1$, that does not admit any non-degenerate $\h$-invariant proper subspace. Let $\check{\h}\subset \co(W)\simeq\co(1,k)$ denote the restriction of $\h$ to $W$ and $\check{\h}_0$ its projection onto $\so(W)\simeq\so(1,k)$.  Since $\check{\h}_0$ has the same invariant subspaces as $\check{\h}$, we have that either $\check{\h}_0$ admits an invariant null line in $W$, in which case also $\check{\h}$ and hence $\h$ admit an invariant null line and we are done, or $\check{\h}_0$ is irreducible and therefore, by \cite{olmos-discala01}, equal to $ \so(1,k)$. In the latter case, $\check{\h}$ contains the grading element $A$ of $\so(1,k)$. Indeed, since $\check{\h}_0=\so(1,k)$, there are $v_\pm\in \s^\pm\subset \so(1,k)$ with $[v_+,v_-]=A$ and $a_\pm\in \R$ such that $w_\pm=a_\pm\1+v_\pm\in \check{\h}$. Then  and $[w_+,w_-]=A\in \check{\h}$. This implies that 
$A+X\in \h$, for an $X\in \so(n+1-k)$ commuting with $A$, so that $\exp(A)\exp(X)\in \H$. Then by Lemma~\ref{Jordan-lem}, $\exp(A)\in \H$, and so $A\in \h$.

Finally, since $\h\not\subset\so(1,n)$, it contains 
 $\alpha \1+C$ with $C\in \so(1,n+1)$, but again by the assumption of $\H$ being algebraic and Lemma \ref{Jordan-lem}, $C$ must be a multiple of the grading element $A$, which we have shown to be contained in $\h$. Hence, $\1\in\h$.
\end{proof}
\begin{corollary} 
\label{para-cor}
Assume that $(M, g)$ is weakly essential and locally conformally homogeneous with isotropy $\H_p$ at $p\in M$. Then either $(M,g)$ is conformally flat or  the Lie algebra $\h_p$ of $\H_p$ is contained in the stabilizer in $\co(T_pM)$ of  a null line, i.e. in   $\R\1\+\p^\pm$.
\end{corollary}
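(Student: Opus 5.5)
The plan is to apply Lemma~\ref{para-lem} to the algebraic group furnished by Corollary~\ref{essential-cor}, and then use Proposition~\ref{Prop. isotropy} to dispose of the case in which the identity lies in $\h_p$.

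First, by Corollary~\ref{essential-cor}(1), $\H_p=\rho_p(H_p)$ contains the identity component of its Zariski closure $\overline{\H}_p$ in $\CO(T_pM)\simeq\CO(1,n+1)$. Hence $\h_p$ coincides with the Lie algebra of the algebraic group $\overline{\H}_p$. By Corollary~\ref{essential-cor}(2), $\h_p\not\subset\so(T_pM,g_p)$. We may therefore apply Lemma~\ref{para-lem} with $\H=\overline{\H}_p$. Either $\1\in\h_p$, or $\h_p$ stabilizes a null line $\ell\subset T_pM$.

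Suppose first that $\1\in\h_p$. Then $e^{t}\1\in\H_p^0\subset\rho_p(H_p)$ for every $t$, using that $\H_p$ contains the identity component of the Zariski closure, so its Lie algebra exponentiates into $\H_p$. Taking $t\neq0$ and applying Proposition~\ref{Prop. isotropy}(2) shows that $(M,[g])$ is conformally flat.

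Suppose instead that $\h_p$ stabilizes a null line $\ell$. Choose a null basis adapted to $\ell$, that is, a Witt decomposition $T_pM=V^{-1}\oplus V^0\oplus V^1$ with $V^1=\ell$ or $V^{-1}=\ell$. The stabilizer of $\ell$ in $\so(T_pM)$ is the maximal parabolic $\p^\pm=\s^0\ltimes\s^{\pm}$, as recalled before Lemma~\ref{para-lem}. The stabilizer in $\co(T_pM)=\R\1\oplus\so(T_pM)$ is then $\R\1\oplus\p^\pm$, since $\1$ preserves every line. This gives the claimed inclusion $\h_p\subset\R\1\oplus\p^\pm$.

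The only point needing care is the passage from $\h_p$ to an algebraic group in the first step. Lemma~\ref{para-lem} requires an algebraic group, and $\H_p$ itself need not be Zariski closed. This is handled by the finite-index statement of Lemma~\ref{Lem: algebraic image}, which ensures that $\H_p$ and its Zariski closure share the same identity component and hence the same Lie algebra. The rest is bookkeeping.
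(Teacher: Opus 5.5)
Your proof is correct and follows the paper's argument: you apply Lemma~\ref{para-lem} to the algebraic group whose Lie algebra is $\h_p$ (via Proposition~\ref{Prop. isotropy} and Corollary~\ref{essential-cor}), and you rule out the case $\1\in\h_p$ with Proposition~\ref{Prop. isotropy}(2). The only difference is that you spell out two steps the paper leaves implicit: exponentiating $\1$ to obtain $e^t\1\in\rho_p(H_p)$, and identifying the stabilizer of a null line in $\co(T_pM)$ with $\R\1\oplus\p^\pm$.
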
 

\begin{proof}
By Proposition \ref{Prop. isotropy},   the Lie algebra  of $\H_p$ is the Lie algebra of an algebraic subgroup of $\CO(1,n+1)$. Moreover,  by Corollary \ref{essential-cor}, it is not contained 
in $\so(1,n+1)$. 
Hence Lemma \ref{para-lem} applies,  and we conclude that either $\h_p$ contains the identity or it stabilizes a null line. In the former case, Proposition \ref{Prop. isotropy} implies that $(M,g)$ is conformally flat.
\end{proof}

\subsection{The simply connected case}
 Observe that $(M,g)$ and its universal cover have the same  local conformal transformations. Moreover, 
 in the locally conformally homogeneous setting, the characterization of weak essentiality  in Proposition \ref{Prop. isotropy} implies that $(M,g)$ is weakly essential if and only if its universal cover is weakly essential. 
Hence, when assuming weak essentiality and local conformal homogeneity, we may assume that $M$ is simply connected. With this assumption, we have the following properties.

\begin{proposition}\label{Prop. M 1-connected} 
Let $(M,g)$ be a locally conformally homogeneous pseudo-Riemannian manifold, with  $M$ simply connected. Let $\H_p$ be the image of the isotropy representation as before. Then 

\begin{enumerate}
\item  Any conformal vector field defined on an open subset of $M$ extends to a globally defined conformal vector field on $M$.
 
\item  Let $\g$ be the Lie algebra of global conformal vector fields on $M$. Then $\g$ acts transitively on $M$; that is, for any $p \in M$, the evaluation map $X \in \g \mapsto X(p) \in T_pM$
is surjective.
\item  Assume $M$ is non-conformally flat.
Let $\g_p = \{ X \in \g \mid X(p) = 0 \}$ be the stabilizer algebra of $p$.
It is naturally identified with $\h_p$ (the Lie algebra of $\H_p$) via the map $ \g_p\ni X \mapsto 
\nabla X|_p\subset \co(T_pM)$, where $\nabla$ is the Levi-Civita connection of $g$.
\end{enumerate} 

\end{proposition}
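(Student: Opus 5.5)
The three items are proved in order. Item (1) rests on Gromov's theory of rigid transformations (Appendix~\ref{Gromov's Theory}), items (2) and (3) on (1).

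\emph{Item (1).} First, local conformal homogeneity forces $(M,g)$ to be real analytic in suitable coordinates. The conformal structure is a rigid geometric structure of finite type, and the local conformal transformations act transitively. By Gromov's open-dense orbit theorem, or its locally homogeneous version, there is then an analytic atlas in which the conformal class is analytic. Conformal vector fields are determined by their finite-order jet at a point: in dimension $\ge 3$ this is the $2$-jet, via the Cartan connection or the conformal Killing prolongation. On an analytic manifold, a conformal vector field defined on an open set can therefore be continued analytically along every path. The continuation is obtained by solving the prolonged linear system, a linear parallel-transport-type equation on the bundle of $2$-jets. Because the structure is locally homogeneous, the dimension of the space of local conformal fields is constant, so the prolonged system is consistent everywhere. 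I would phrase this through Nomizu/Amores-type extension: the sheaf of local conformal Killing fields is locally constant, with constant rank. Simple connectedness of $M$ then gives monodromy-free global extension. I expect the main obstacle to be the constant rank of that sheaf, since without it continuation may fail at singular points. That constant rank is exactly what local homogeneity provides, because every germ at $p$ is transported to $q$ by a local conformal map.

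\emph{Item (2).} Local homogeneity together with Gromov's theory implies that the pseudo-group $\mathbf P$ is generated, up to the discrete part, by local flows. More precisely, the orbit of $p$ under the local conformal Killing algebra is open, because the $\mathbf P$-orbit is all of $M$ and Gromov's theorem identifies $\mathbf P$-orbits with orbits of the Killing-field algebra. So for each $p$ the evaluation map of local conformal fields at $p$ is onto $T_pM$. By (1) these fields are restrictions of elements of $\g$, so the evaluation map of $\g$ is surjective at every point. Connectedness of $M$ then gives transitivity, as in Section~\ref{Section: terminology}.

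\emph{Item (3).} Consider the map $X\mapsto\nabla X|_p$ on $\g_p$. For a conformal field vanishing at $p$, $\nabla X|_p$ lies in $\co(T_pM)$ and equals the derivative of its flow, so the image lies in $\h_p$.
\begin{itemize}
\item \emph{Injectivity.} If $X\in\g_p$ and $\nabla X|_p=0$, then the flow $\phi^t$ of $X$ lies in $\ker\rho_p$. By Proposition~\ref{Prop. isotropy}(3), conformal flatness would follow unless $\phi^t$ is trivial. Since $M$ is not conformally flat, $X=0$ near $p$, hence globally by analyticity.
\item \emph{Surjectivity.} By Gromov's theory, the identity component of the algebraic group $H_p$ consists of germs of flows of local conformal fields fixing $p$. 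Hence every element of $\h_p$ is $\nabla X|_p$ for some local, and by (1) global, field $X$ with $X(p)=0$.
\end{itemize}
The map is a Lie algebra anti-homomorphism, or a homomorphism after a sign convention, which gives the natural identification.
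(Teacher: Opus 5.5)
Your proposal is correct and follows essentially the paper's route: items (2) and (3) use the same Gromov/Pecastaing input, namely that local Killing orbits coincide with $\mathbf P$-orbits and that the Lie algebra of the local isotropy consists of local conformal fields vanishing at $p$, combined with injectivity of $\rho_p$ from Proposition~\ref{Prop. isotropy}(3). For item (1) the paper only cites Ledger--Obata's extension lemma under analyticity; your argument (local homogeneity gives finite constant stalk dimension, so the sheaf of conformal fields is locally constant and hence constant on simply connected $M$) is the mechanism behind that lemma and in fact makes the appeal to analyticity unnecessary.
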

 
\begin{proof}
\begin{enumerate}
\item This is shown in 
\cite[Lemma 3]{LedgerObata70} under the assumption that $(M,g)$ is analytic. 
Because of local conformal homogeneity, this assumption is satisfied.

\item See Appendix \ref{Gromov's Theory}, Paragraph `The pseudogroup action and local Killing fields'. 
\item By Appendix \ref{Gromov's Theory}, Paragraph `Local isotropy via the pseudogroup action', $\g_p$ is identified with the Lie algebra of $H_p$, which, by Proposition \ref{Prop. isotropy}, item (3), is in turn identified with $\h_p$, the Lie algebra of $\H_p$.
\end{enumerate}
\end{proof}
This proposition allows us, in what follows, and up to passing to the universal cover, to work with the (essential and transitive) Lie algebra $\g$ of global conformal vector fields.\bigskip

\subsection{The $B^t$-adjoint-action on $\g$}\label{B-sec}
We assume that $(M, g)$ is weakly essential.
 By Corollary \ref{essential-cor}, $\H_p$ contains a one-parameter subgroup $B^t= \e^{t \alpha} \e^{t A}$, where $\alpha >0$ and $A$ is the grading element of $\so(1,n+1)$.
We want to understand the adjoint action of $B^t= \e^{t \alpha} \e^{t A}$ on the Lie algebra $\g$ of global conformal vector fields of $(M,g)$.  
We further assume  that the isotropy representation $\rho_p$ is injective, which, by Proposition \ref{Prop. isotropy}, holds when the manifold is not conformally flat. Under this assumption, the isotropy algebra $\g_p$ identifies with its image $\h_p$ (see the third point of Proposition \ref{Prop. M 1-connected}).  
For simplicity of notation, we set $\h := \h_p$. 
Then, as a vector space, $\g$ is isomorphic to  $\g \cong \h \oplus (\g/\h)$, 
where $\g/\h$ is naturally identified with $T_pM$.\medskip

As for the $B^t$-adjoint action, it can be described as follows:
\begin{itemize}
  
\item Since  $\mathsf{Ad}_{B^t}$ preserves $\h$, it acts on 
 $\g/\h$. Its action there coincides with that of   $B^t$  on $T_pM$, that is,\
   \[B^t= e^{t \alpha} \diag{(e^t, e^{-t}, 1, \ldots, 1)} = \diag{(e^{(\alpha+1)t}, e^{(\alpha-1)t}, e^\alpha, \ldots, e^\alpha)}
.\]

\item On $\h$, $B^t$ can be seen as a one-parameter subgroup of $\CO(T_pM)=\R^*\times \O(T_pM, g_p)$, and $\h$ is a Lie subalgebra of $\co(T_pM)=\R \+ \so(T_pM, g_p)$. Then the  $\mathsf{Ad}_{B^t}$-action
 on $\h$ is simply the restriction of its $\mathsf{Ad}$-action on $\co(T_pM)$,
\[\Ad_{B^t} : \co(T_pM) \rightarrow \co(T_pM), \quad X \mapsto B^t X B^{-t}.\]
 Since $B^t$ acting on $T_pM$ is diagonalizable, the adjoint action $\mathsf{Ad}_{B^t}$  on $\co(T_pM)$ is also diagonalizable, with eigenvalues  $e^{\pm t}$ and $1$. 
Therefore,  the  $\mathsf{Ad}_{B^t}$-action  on $\h$ is diagonalizable with with eigenvalues forming a subset of those of $\Ad_{B^t}$ on $\co(T_pM)$.
\end{itemize}
 
Observe that the adjoint action $\Ad_{B^t}$ on $\g$ is not necessarily diagonalizable, since it may fail to preserve a subspace complementary to $\h$ and isomorphic to $\g/\h \cong T_pM$. However, in this situation we have the following.

\begin{lemma}\label{Aut-prop} 
There exists a one-parameter group of diagonalizable automorphisms  $\B_t$ of $\g$ 
whose action coincides with that of $\Ad_{B^t}$ on both $\h$ and $\g/\h$ (that is, with an invariant subspace complementary to $\h$). 
\end{lemma}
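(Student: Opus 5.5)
Take $X_0\in\h$ to be the generator of $B^t$, that is $X_0=\alpha\1+A$ viewed in $\g_p\cong\h$, and consider $\ad_{X_0}$ on $\g$. We have $\Ad_{B^t}=\e^{t\,\ad_{X_0}}$ on $\g$, since $B^t$ is the flow of the conformal vector field $X_0$ fixing $p$, and its adjoint action on the global vector fields is the differential of that flow. The derivation $D:=\ad_{X_0}$ preserves $\h$. On $\h$ and on $\g/\h\cong T_pM$ it induces diagonalizable endomorphisms with real eigenvalues: on $\h$ the eigenvalues lie in $\{-1,0,1\}$, and on $\g/\h$ they are $\alpha+1,\alpha-1,\alpha$. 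The plan is to take the additive Jordan decomposition $D=D_s+D_n$ of $D$ in $\End(\g)$ and set $\B_t:=\e^{tD_s}$.

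First, $D_s$ and $D_n$ are again derivations of $\g$. This is the standard fact that the semisimple and nilpotent parts of a derivation of a finite-dimensional Lie algebra are derivations. One proof uses the generalized eigenspace decomposition $\g=\bigoplus_\lambda\g_\lambda$ for $D$, together with the Leibniz rule identity
\[(D-\lambda-\mu)^k[x,y]=\sum\binom{k}{i}[(D-\lambda)^ix,(D-\mu)^{k-i}y],\]
which gives $[\g_\lambda,\g_\mu]\subset\g_{\lambda+\mu}$. Since $D_s$ acts by the scalar $\lambda$ on $\g_\lambda$, it is a derivation. Consequently $\B_t=\e^{tD_s}$ is a one-parameter group of automorphisms of $\g$, and it is diagonalizable over $\R$ because all generalized eigenvalues of $D$ are real. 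The eigenvalues are real since the characteristic polynomial of $D$ factors through those on $\h$ and on $\g/\h$.

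Next, $D_s$ and $D_n$ are polynomials in $D$, so they preserve $\h$. Restricting to $\h$, $D|_\h=D_s|_\h+D_n|_\h$ is the Jordan decomposition of $D|_\h$. Since $D|_\h$ is already semisimple, $D_n|_\h=0$, and so $\B_t|_\h=\Ad_{B^t}|_\h$. The induced map on the quotient also gives the Jordan decomposition of the induced map $\bar D$ on $\g/\h$. Since $\bar D$ is semisimple (it is $\diag(\alpha+1,\alpha-1,\alpha,\dots,\alpha)$), $\bar D_n=0$, so $\B_t$ induces $B^t$ on $\g/\h$. Finally, an invariant complement to $\h$ exists because $D_s$ is semisimple: take the sum of the intersections of a $D_s$-invariant complement with the eigenspaces, or equivalently choose an eigenvector basis of $\g$ extending one of $\h$.

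The main point to check carefully is the first step, namely that the semisimple part of a derivation is a derivation; the eigenvalue-additivity argument above handles it. The remaining steps are routine facts about Jordan decompositions and invariant subspaces.
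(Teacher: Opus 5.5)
Your proof is correct and is essentially the paper's argument carried out at the Lie algebra level: the paper takes the multiplicative Jordan decomposition $\Ad_{B^t}=S_tU_t$ in the algebraic group $\mathrm{Aut}(\g)$, while you take the additive decomposition $D=D_s+D_n$ of $D=\ad_{X_0}$ and check directly that $D_s$ is a derivation, so that $S_t=\e^{tD_s}$. Your appeal to uniqueness of the Jordan decomposition on $\h$ and on $\g/\h$ spells out why $\e^{tD_s}$ agrees with $\Ad_{B^t}$ there, which the paper only asserts; your first description of the invariant complement is circular, but the eigenbasis version you also give works.
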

 
\begin{proof} 
The automorphism group of a Lie algebra $\g$ is an algebraic subgroup of $\GL(\g)$ (since it consists of those elements of $\GL(\g)$  preserving the bracket). 
Hence, when we decompose $\Ad_{B^t} = S_t U_t$ into its semisimple and unipotent parts, 
both $S_t$ and $U_t$ are automorphisms of $\g$. 
Then, $S_t$ is the required diagonal one-parameter group of automorphisms, having the same eigenvalues as $\Ad_{B^t}$ on $\h$ and $\g/\h$.
\end{proof}

\subsection{Canonical $\g$-invariant lightlike distributions}
\label{N-sec} 
In Corollary \ref{para-cor}, we have seen that if $(M,g)$ is weakly essential and non conformally flat, the image  of the isotropy algebra at $p$ under $\rho_p$ is contained in the stabilizer of a null line in $T_pM$, and hence leaves this null line invariant. First we will describe this situation more generally for any invariant subspace.\medskip

Let $\g$ be the  Lie algebra  of conformal vector fields of $(M,g)$.   We have an embedding of $\g$ into the vector fields of $M$,  $\g\hookrightarrow\mathfrak{X}(M)$. Let $\mathbf{G}$ be the connected and simply connected Lie group with Lie algebra $\g$. There exists a local action of $\mathbf{G}$ on $M$ such that $X \in \g$ coincides with the the fundamental vector field $q \in M \mapsto \widetilde{X}(q):=\tfrac{\d}{\d t} (\exp(tX)(q))|_{t=0}$  associated to $X$ (see  \cite[Chapter II, Theorem XI and its corollary]{Pal56}).

If $q\in M$, then by $\g|_q$ we denote the image of the evaluation map  at $p$ by
\[\g|_q:=\{ {X}(q)\mid X\in \g\}.\]
For $\psi\in \mathbf G$ acting locally on $M$, with $\psi^*g=f^2 g$, we have
\begin{equation}\label{push-ad}
 \psi_*\widetilde{X}=\widetilde{\Ad_{\psi}(X)}.\end{equation}
Hence we relate the metric at $\psi(p)$ to the metric at $p$ by
$$ g (\widetilde{X},\widetilde{Y})_{|\psi(p)}= f^{-2}(\psi(p))  g_p(\widetilde{\Ad_{\psi^{-1}}(X)}_p,\widetilde{\Ad_{\psi^{-1}}(Y)}_p) .$$

\begin{proposition}\label{distributionprop}
Let $\g$ be the algebra of conformal vector fields of $M$, and assume that $\g$ acts transitively on $M$.  
Fix a point 
$p\in M$, and let $\g_p$ be the isotropy subalgebra at $p$, with image $\h_p$ under the isotropy representation $\rho_p$.  
Let $V_p$ be a subspace of $T_pM$ that is invariant under $\h_p \subset \co(T_pM)$. Then
\begin{enumerate}
\item There is a vector distribution $V$ with $V_p$ as fiber at $p$ that is invariant under $\g$, i.e. $\d\psi_p(V_p)=V_{\psi(p)}$ for all local conformal transformations $\psi \in \mathbf{G}$ acting locally on $M$.
\item If there is a Lie algebra $\l\subset \g$ with stabilizer algebra $\l_p$ such that the isomorphism $T_pM\simeq \g/\g_p$ restricts to an isomorphism $V_p\simeq \l/\l_p$, then $V$ is involutive with fiber at $\psi(p)$ spanned by $\Ad_\psi (\l)|_{\psi(p)}$, for $\psi\in \mathbf{G}$ acting locally on $M$. 
In particular, if $\l$ is an ideal of $\g$, then $V_q$ is spanned by $\l|_q$ for each $q\in M$.
\item If $\mathbf{L}$ is the Lie group corresponding to $\l$, then  the leaves through $q=\psi(p)$ are given 
by the orbits of $q$ under $\psi \mathbf L \psi^{-1}$. 
In particular, if $\l$ is an ideal, then the integral manifold through $q$ is the orbit of $q$ under $\mathbf{L}$.
\end{enumerate}
\end{proposition}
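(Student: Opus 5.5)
We will build $V$ by transporting $V_p$ with the local action of $\mathbf G$, check that the result is well defined using $\h_p$-invariance, and then get involutivity and the leaves from the subalgebra $\l$.

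\emph{Step (1): definition of $V$.} Since $\g$ acts transitively, every point near $p$ (and, by connectedness and chaining, every point of $M$) has the form $q=\psi(p)$ for some $\psi\in\mathbf G$ acting locally. We set $V_q:=\d\psi_p(V_p)$. To show this is well defined, suppose $\psi(p)=\psi'(p)$. Then $\phi:=\psi^{-1}\psi'$ fixes $p$, so it lies in the stabilizer of $p$ in $\mathbf G$. Since $\mathbf G$ is connected and simply connected, the local stabilizer is generated near the identity by $\exp(\g_p)$. For $X\in\g_p$, the derivative of $\exp(tX)$ at $p$ is $\e^{t\nabla X|_p}$ with $\nabla X|_p\in\h_p$, and this preserves $V_p$ because $V_p$ is $\h_p$-invariant. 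The main obstacle is the components of the stabilizer that are not reached by $\exp(\g_p)$. I would handle this first by working with the orbit map $\mathbf G\to M$, $\psi\mapsto\psi(p)$, defined on a neighbourhood, and defining $V$ as the pushforward of the left-invariant distribution on $\mathbf G$ given by $\mathrm{pr}^{-1}(V_p)\subset\g$, where $\mathrm{pr}:\g\to\g/\g_p\simeq T_pM$. The $\h_p$-invariance makes $\mathrm{pr}^{-1}(V_p)$ an $\Ad(\g_p)$-invariant subspace containing $\g_p$. It therefore descends along the fibres, which are cosets of the connected stabilizer. This yields a smooth distribution near $p$, and we extend it along chains of such neighbourhoods. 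The same identity $\psi_*\widetilde X=\widetilde{\Ad_\psi X}$ from (\ref{push-ad}) gives $\d\psi(V_q)=V_{\psi(q)}$ for all $\psi\in\mathbf G$.

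\emph{Step (2): involutivity.} Under the hypothesis $V_p\simeq \l/\l_p$, the evaluation $\l|_p$ is exactly $V_p$. Applying $\psi$ and using (\ref{push-ad}), we get $V_{\psi(p)}=\d\psi(\l|_p)=\Ad_\psi(\l)|_{\psi(p)}$. Near $q=\psi(p)$, the fundamental vector fields of the subalgebra $\Ad_\psi(\l)$ form a Lie algebra of vector fields whose orbits have constant dimension $\dim V$ at $q$. By invariance, at each nearby point $V$ is spanned by the appropriate conjugate. To prove involutivity I would argue directly: the $\Ad_\psi(\l)$-orbit through $q$ is an immersed submanifold of dimension $\dim V_q$ tangent to $V$ at $q$. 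Translating it by $\mathbf G$ shows that through each point there is an integral manifold of $V$, which implies involutivity. If $\l$ is an ideal, then $\Ad_\psi(\l)=\l$, so $V_q=\l|_q$ for every $q$. In that case the bracket-closed span of $\l$ is automatically involutive.

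\emph{Step (3): the leaves.} We identify the leaf through $q=\psi(p)$ with the orbit $\psi\mathbf L\psi^{-1}\cdot q=\psi(\mathbf L\cdot p)$. This follows from (2): the orbit $\mathbf L\cdot p$ is a connected integral manifold of $V$ through $p$, with the correct dimension at each of its points by invariance. Its image under $\psi$ is therefore an integral manifold through $q$, and it is maximal locally by uniqueness in Frobenius. For an ideal, $\psi\mathbf L\psi^{-1}=\mathbf L$, so the leaf through $q$ is simply $\mathbf L\cdot q$.
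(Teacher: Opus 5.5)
Your argument follows the paper's route. You transport $V_p$ by $\mathbf G$, use (\ref{push-ad}) to get $V_{\psi(p)}=\Ad_\psi(\l)|_{\psi(p)}$, and take the leaves to be $\psi(\mathbf L\cdot p)$. Your local descent through the orbit map, and your proof of involutivity via integral manifolds, are more careful than the paper's one-line versions.

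The step that does not hold up as written is ``we extend it along chains of such neighbourhoods''. Your descent only compares $\psi,\psi'$ lying in one small submersion chart of the orbit map, where the fibres are connected. The stabilizer elements outside $\exp(\g_p)$ that you identified as the main obstacle appear exactly when you compare two different chains from $p$ to $q$, or run a chain around a loop. The result is then $\d\phi_p(V_p)$, where $\phi$ fixes $p$ and is a composite of flows of $\g$ but need not lie in $\exp(\g_p)$. So chaining does not remove the obstacle, and your closing claim that $\d\psi(V_q)=V_{\psi(q)}$ for all $\psi\in\mathbf G$ is not yet justified.

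The missing ingredient is $\pi_1(M)=1$, together with a monodromy argument. Simple connectedness is the paper's standing reduction: the proposition is only applied on the universal cover, cf.\ Proposition~\ref{Prop. M 1-connected}. The monodromy argument runs as follows.
\begin{enumerate}
\item By transitivity, a $\g$-invariant distribution on a connected open set is determined by its value at any single point.
\item By your local construction, every $\h_q$-invariant subspace is the value at $q$ of such a distribution near $q$.
\item Hence the sheaf of $\g$-invariant distributions of the given rank is locally constant. On simply connected $M$ the germ at $p$ therefore extends to a global $\g$-invariant $V$.
\item A posteriori every $\psi\in\mathbf G$ fixing $p$ preserves $V_p$, which is item (1).
\end{enumerate}

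Without simple connectedness the statement can actually fail. Let $G$ be a Lie group whose left-invariant metric has no conformal vector fields other than the generators of left translations. Let $n\in G$ generate a discrete subgroup, with $\Ad(n)$ orthogonal and $\Ad(n)\neq\mathrm{Id}$. On $M=G/\langle n\rangle$ one has $\h_p=0$. Yet left translation by $n$ fixes $p=[e]$ and acts on $T_pM\cong\g$ by $\Ad(n)$, which moves a generic $V_p$.

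The paper's own proof passes over this point with ``this is well-defined''. You should state the simple-connectedness hypothesis and the monodromy argument explicitly.
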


\begin{proof}
\begin{enumerate}
\item By the $\g$-transitivity, one can transport $V_p$ to any point: for $q=\psi(p)$,  with $\psi\in \mathbf G$, we define $V_q:=\d\psi_p (V_p)$. Since $\h_p$ leaves $V_p$ invariant, this is well-defined. This defines a vector distribution $V$  that is invariant under the action of $\g$.
\item It follows from equation (\ref{push-ad}) that
 $V_q=\d\psi_p(V_p)=\d\psi_p(\l|_p) =(\psi_*\l)|_q=\Ad_\psi(\l)|_q$. 
 Since $\l$ is a Lie algebra, $V$ is involutive.
\item  Let $L$ be the orbit of $p$ under the (local) action of the connected Lie group $\mathbf L$ with Lie algebra $\l$.   Then
$T_p L = \l|_p=\{\widetilde{X}(p) \mid V \in \l\}$. 
By the condition on $\l$, we also have that $T_pL=V_p$, so that $L$ is tangent at $p$ to $N_p$. Moreover, $L$ is an integral manifold of $V$ through $p$. 
Indeed, for any $q=\psi(p)\in L$ with $\psi\in \mathbf{L}$, we have 
$V_q=\d \psi_p(V_p) = \d \psi_p(T_pL)=T_qL$, 
i.e. $N$ is tangent to $L$ everywhere on $L$.  
Finally, by local transitivity of $\mathbf{G}$, $V$ is tangent to images of $L$ under local conformal transformations.
In particular, if $q=\psi (p)$, then the integral manifold of $V$ though $q$ is the orbit under the conjugated group $\psi \mathbf{L} \psi^{-1}$. 
\end{enumerate}  
\end{proof}

Now we return to our situation.
 Let  $B=\diag{(\alpha+1,\alpha-1, \alpha,\dots, \alpha)}\in \h_p$, and denote by $V^\mu$ the eigenspace of $B$ in $T_pM$ corresponding to the eigenvalue $\mu$. Then
$$T_pM= V^{\alpha+1}\+ V^{\alpha-1}\+V^\alpha,$$
where $ V^{\alpha\pm1}$ are null lines and $V^\alpha $ is a Euclidean space. 
Note that 
\[\s^\mu(V^\nu) = V^{\mu+\nu},\]
where $\s^\mu$ is defined as in (\ref{grading}). 
In particular, the parabolic subalgebras 
\[\p^\pm=\s^0\ltimes \s^\pm=\mathfrak{stab}_{\so(1,n+1)}(V^{\alpha\pm 1})\]  are precisely the stabilizers of the null lines $V^{\alpha\pm1}$, respectively.\medskip

By Corollary \ref{para-cor}, if $(M,g)$ is weakly essential, then it is either conformally flat or  $\h_p$  is contained in the stabilizer in $\co(1,n+1)$ of a null line. Thus,  the first point in Proposition \ref{distributionprop} yields the following.
\begin{corollary}
Let $(M,g)$ be a Lorentzian manifold with an essential and transitive
Lie algebra $\g$ of conformal  vector fields. Then either $(M,g)$ is  conformally flat, or 
it admits 
a null line distribution $N$, and hence a null hyperplane distribution $N^\perp$, invariant under the action of $\g$, with $N^\perp_p=V^\alpha\+V^{\alpha\pm 1}$. 
\end{corollary}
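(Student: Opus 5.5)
The plan is to reduce the corollary to the local setting of Section~\ref{Section: isotropy} and then apply Proposition~\ref{distributionprop}(1).

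First I translate the hypotheses into statements about the isotropy. Since $\g$ is transitive, $M$ is locally conformally homogeneous. Since $\g$ is essential, $(M,g)$ is weakly essential, because a metric preserved by all local conformal transformations would make every element of $\g$ Killing. Passing to the universal cover changes neither property nor the local conformal structure, so by Proposition~\ref{Prop. M 1-connected} I may assume $M$ is simply connected and that $\g$ is the full algebra of global conformal vector fields.

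Next I assume $(M,g)$ is not conformally flat. Proposition~\ref{Prop. isotropy}(3) then shows that $\rho_p$ is injective, and Proposition~\ref{Prop. M 1-connected}(3) identifies $\g_p$ with $\h_p$. Corollary~\ref{para-cor} now shows that $\h_p\subset\R\1\+\p^\pm$, so $\h_p$ stabilizes a null line. I make this line explicit using the element $B=\alpha\1+A\in\h_p$ of Corollary~\ref{essential-cor}, with $A$ the grading element. The grading element lies in $\s^0\subset\p^\pm$, so the null lines it fixes are exactly $V^{\alpha\pm1}$. Corollary~\ref{para-cor} provides some $\h_p$-invariant null line, and I need it to be one of these. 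The step requiring care is ensuring that the null line stabilized by $\h_p$ is an eigenline of $B$. This follows because $B\in\h_p$ preserves that line, so the line is an eigenline of $B$. The null eigenlines of $B$ are precisely $V^{\alpha+1}$ and $V^{\alpha-1}$, since $V^\alpha$ is Euclidean and a null vector must have nonzero components in both of $V^{\alpha\pm1}$ unless it lies in one of them. Call this invariant line $N_p=V^{\alpha\pm1}$.

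Because $\h_p\subset\co(T_pM)$ preserves $N_p$, it also preserves $N_p^\perp=V^\alpha\+V^{\alpha\pm1}$. Applying Proposition~\ref{distributionprop}(1) to $N_p$ and to $N_p^\perp$ gives $\g$-invariant distributions $N$ and $N^\perp$ with these fibers at $p$. The transported fibers stay null and orthogonal because the maps $\d\psi_p$ are conformal. Finally, these distributions descend from the universal cover to $M$: local conformal transformations commute with the deck transformations, which are conformal maps preserving the construction, and the line defined via $\h_p$ is canonical. The only delicate point there is that the choice between $+$ and $-$ is canonical; if it were not, I would instead carry out the argument directly on $M$ using the pseudo-group version of Proposition~\ref{distributionprop}.
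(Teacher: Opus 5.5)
Your argument is essentially the paper's: Corollary~\ref{para-cor} gives an $\h_p$-invariant null line, and Proposition~\ref{distributionprop}(1) transports it and its orthogonal complement to $\g$-invariant distributions. Your remark that this line is preserved by $B\in\h_p$, and so must be one of the null eigenlines $V^{\alpha\pm1}$, usefully makes explicit what the paper leaves implicit in ``i.e.\ in $\R\1\+\p^\pm$''.

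The passage to the universal cover is unnecessary, and your final descent step is not justified as written. If $\h_p$ preserved both $V^{\alpha+1}$ and $V^{\alpha-1}$, nothing you say prevents a deck transformation from exchanging the two lifted distributions. You can simply skip the cover, as the paper does. Proposition~\ref{distributionprop}(1) only requires $N_p$ to be invariant under $\rho_p(\g_p)$, and $\rho_p(\g_p)\subset\h_p$ holds for the given $\g$ directly on $M$, because the flows of elements of $\g_p$ are local conformal maps fixing $p$.
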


\section{Towards a Heisenberg algebra of Killing vector fields}\label{Section: Heis algebra}
From now on, we assume that the isotropy representation $\rho_p$ is injective, so that the isotropy algebra $\g_p$ identifies with its image $\h:=\h_p$ under $\rho_p$, and $\g$ is, as a vector space, isomorphic to $\g \cong \h \oplus (\g/\h)$.  We also assume that  $\h$  is contained in either $\R\+\p^+$  or  $\R\+\p^-$. 
For $\h\subset \R\+ \p^\pm$,   $\ad_B$ acts on $\h$ with eigenvalues $0$ and  $\pm 1$, respectively. 
As explained in Section \ref{B-sec}, the induced action of $\ad_B$ on $\g/\h$ coincides with the action of $B$ on the tangent space, i.e. with eigenvalues $(1\pm\alpha)$ and $\alpha$. Consequently, the set of generalized eigenvalues of $\ad_B$ on $\g$ are
$$\sigma_B := \{\pm1, 0, \alpha-1,\alpha, \alpha+1\}.$$
Now, let $\B=B_s\in \mathfrak{aut}(\g)$ be the semisimple part of $\ad_B$, as given by Lemma \ref{Aut-prop}. Then $\B$ is diagonalizable, with eigenvalues contained in $\sigma_B$.
Denote by $\g^\mu$ the corresponding eigenspaces, so that
$$\g=\sum_{\mu\in \sigma_B} \g^\mu.$$ 
Since $\B$ is a derivation of $\g$, the eigenspace decomposition satisfies
\begin{eqnarray}\label{roots}
\left[ \g^\mu,\g^\nu \right]&\subset& \g^{\mu+\nu},
\end{eqnarray}
where, by convention, $\g^\mu=\{0\}$ if $\mu\not\in\sigma_B$. 
We obtain the decomposition
$$\g=(\g^0\+\g^{\pm 1}) + (\g^{\alpha-1}\+\g^\alpha\+\g^{\alpha+1}), \quad \h\subset \g^0\+\g^{\pm 1},
$$ 
where the sum $+$ in the middle need not be direct.
 \medskip

Define  $\h^{00}:=\g^0 \cap \h \cap \so(1,n+1) \subset \so(n)$. We will also assume that $\g^0=\R B \oplus \h^{00}$; otherwise, $\h$ would contain the homothetic diagonal one-parameter group $\diag{(e^{\alpha t}, \ldots, e^{\alpha t})}$, and the metric would be conformally flat. \medskip

Notation: Given a subalgebra $\k \subset \g$, we  write $\k_p:=\k \cap \h_p$, where $\h_p=\h$ denotes the stabilizer algebra at $p\in M$. \medskip

Since, by assumption, $\alpha \neq 0$, we may further assume that $\alpha > 0$ by replacing $B$ with $-B$ if necessary. With the sign of $\alpha $ fixed, we have to consider both cases $\h\subset \g^0\+\g^+$ and $\h\subset \g^0\+\g^-$.
The goal of this section is to prove the following proposition.

\begin{proposition}\label{Prop. existence of foliation}
Let $(M,g)$ be a Lorentzian manifold of dimension $n+2$, with a transitive Lie algebra $\g$ of conformal vector fields, and  stabilizer algebra $\h$  at $p\in M$.
Assume that 
$\h \subset \g^0 \oplus \g^{\pm 1}$ and $\g^0=\R B \oplus \h^{00}$ with $B=\mathrm{diag}(\alpha+1,\alpha-1,\alpha,\ldots, \alpha)$. 
Then there is a metric $\hat g$ in the conformal class of $g$ and 
an algebra  of Killing vector fields $\g_1$ for $\hat g$ that acts locally transitively at $p$. Moreover, 
\begin{enumerate}
    \item If  $\alpha \notin \left\{\frac{1}{2}, 1, 2\right\}$, then  $(M,g)$ is conformally flat.
    
    \item If $\alpha \in \{\frac{1}{2}, 1, 2\}$ and $\h \subset \g^0 \oplus \g^1$, then there exists a codimension-one lightlike foliation $\F$ on $M$, such that the ideal $\l \subset \g$  of conformal vector fields preserving the leaves of $\F$ individually, satisfies:
    \begin{itemize}
    \item[(a)] there exists a subalgebra $\f \subset \l$ of Heisenberg type, containing $\g^{\alpha+1}$ in its center, and acting locally transitively on the leaves of $\F$,
    \item[(b)] the null line bundle $\F^\perp$ is defined by the one-dimensional subspace $\g^{\alpha+1} \subset \f$, and 
    \item[(c)] $\f$ is contained in $\g_1$, i.e.~it acts by Killing fields on the metric $\hat{g}$ in the conformal class.
    \end{itemize}
    \item If $\alpha \in \{\frac{1}{2}, 1, 2\}$ and $\h \subset \g^0 \oplus \g^{-1}$, such a foliation exists with the same properties in a neighborhood of $p$. 
\end{enumerate}
\end{proposition}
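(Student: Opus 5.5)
The plan is to work entirely inside the grading $\g=\bigoplus_{\mu\in\sigma_B}\g^\mu$ given by the diagonalizable derivation $\B$ of Lemma~\ref{Aut-prop}, with $[\g^\mu,\g^\nu]\subset\g^{\mu+\nu}$. We first extract the Killing subalgebra. Set $\g_1:=\ker$ of the ``homothety character'', namely the subspace $\g_1:=\bigoplus_{\mu\neq 0}\g^\mu\oplus \h^{00}$. Since $\g^0=\R B\oplus\h^{00}$, this is a codimension-one subalgebra: brackets land in nonzero degrees or in $\g^0$, and $[\g^\mu,\g^{-\mu}]$ cannot produce a $B$-component because $\tr\ad B\ne 0$ would be violated on a solvable/graded piece. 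This last point needs checking; the alternative is to define the character $\chi:\g\to\R$ as the conformal factor derivative at $p$ and show $\chi$ vanishes off $\R B$. The map $\g\to\R$, $X\mapsto$ (divergence of $X$, suitably normalized) is a cocycle, not a character, in general. However, on a transitive algebra with $\g^0=\R B\oplus\h^{00}$, grading arguments show it can be corrected by a coboundary $X\mapsto X(\log f)$. Then $\hat g=f^2g$ makes $\g_1$ Killing. Since $\g_1$ contains all $\g^\mu$ with $\mu\in\{\alpha-1,\alpha,\alpha+1\}$ up to isotropy parts, it projects onto $\g/\h\cong T_pM$, giving local transitivity at $p$.

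Next we locate the admissible values of $\alpha$. The quotient $\g/\h$ has $\B$-weights $\alpha+1,\alpha-1,\alpha$, whereas $\h$ has weights in $\{0,\pm1\}$. If $\alpha\notin\{\tfrac12,1,2\}$, then brackets of translation-type elements have weights $2\alpha,2\alpha\pm1,2\alpha\pm2$, which mostly fall outside $\sigma_B$. Hence the translational part $\m:=\g^{\alpha-1}\oplus\g^\alpha\oplus\g^{\alpha+1}$ is an abelian subalgebra, except for coincidences like $2\alpha-1=\alpha+1$ or $2\alpha-2=\pm1,0$; these coincidences are exactly what produce $\alpha\in\{\tfrac12,1,2\}$ (and possibly $\tfrac32,3$, which must be ruled out separately). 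An abelian transitive algebra of Killing fields for $\hat g$, normalized by a homothety $B$ with non-conformal-Killing behaviour, forces $\hat g$ flat: the curvature is invariant under the abelian translations and scaled by $e^{-2\alpha t}$-type factors under $B^t$, so it vanishes. This gives (1).

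For (2), with $\h\subset\g^0\oplus\g^1$, the null line $V^{\alpha+1}$ is $\h$-invariant, since $\s^1$ raises weight and $\alpha+1$ is maximal. By Corollary~\ref{distributionprop} we then get the invariant distribution $N$ with $N_p=V^{\alpha+1}$, and $N^\perp$ with $N^\perp_p=V^\alpha\oplus V^{\alpha+1}$. We take $\l:=\g^{\alpha}\oplus\g^{\alpha+1}\oplus(\text{weights }\geq$ the relevant ones$)$ and verify that it is an ideal; this works because weights $\alpha,\alpha+1$ are at the top, and bracketing with $\g^{\alpha-1}$ lands in $\g^{2\alpha-1}$, which lies in $\l$ or vanishes. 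Proposition~\ref{distributionprop}(2),(3) then gives the foliation $\F$ with leaves equal to $\mathbf L$-orbits. We set $\f:=\g^{\alpha}\oplus\g^{\alpha+1}$. Then $[\g^\alpha,\g^{\alpha+1}]\subset\g^{2\alpha+1}=0$, and $[\g^\alpha,\g^\alpha]\subset\g^{2\alpha}$, which is nonzero only if $2\alpha\in\sigma_B$, i.e.\ $2\alpha=\alpha+1$ ($\alpha=1$); in that case it lands in the center $\g^{\alpha+1}$. This yields a Heisenberg-type algebra with $\g^{\alpha+1}$ central and $\dim\g^{\alpha+1}=1$, spanning $N$. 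This gives (a) and (b), and (c) holds since $\f\subset\g_1$. For $\alpha=\tfrac12,2$ some weight bookkeeping is needed, because $\g^{\alpha}$ and $\g^{\pm1}$ may coincide (for instance $\g^{\alpha-1}=\g^{1}$ when $\alpha=2$); this should be handled by choosing the complement via the $\B$-invariant splitting.

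For (3), with $\h\subset\g^0\oplus\g^{-1}$, the invariant line is $V^{\alpha-1}$, which now has the minimal weight. Here $\l$ is built from low weights and need not be an ideal, so we apply Proposition~\ref{distributionprop}(2) only at $p$ and for $\psi$ near the identity, which gives a local foliation. The main obstacle I expect is the first step: producing the conformal factor $f$ so that the nonzero-weight part becomes Killing, that is, proving the relevant cocycle is trivial on $\g_1$. I would try to use $B$-equivariance, together with the fact that a cocycle of nonzero weight is a coboundary by Cartan's homotopy argument with the semisimple derivation $\B$. The weight-zero part reduces to $\h^{00}\subset\so(n)$, on which the cocycle vanishes.
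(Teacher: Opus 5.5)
Part (1) is fine. It is the paper's Lemma~\ref{Lem. CF case}, and the values $\alpha=\tfrac32,3$ you worry about are harmless: they only arise from $[\g^{\alpha-1},\g^{\alpha-1}]$, and $\g^{\alpha-1}$ is one-dimensional.

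The genuine gap is the construction of $\g_1$ for $\alpha\in\{\tfrac12,1,2\}$, exactly where you hesitate. Your $\g_1=\bigoplus_{\mu\neq0}\g^\mu\oplus\h^{00}$ is closed under brackets only if no $[\g^\mu,\g^{-\mu}]$ has a $B$-component. For $\alpha=\tfrac12$ this is the nontrivial claim $[\g^{\alpha-1},\g^{\alpha}]\subset\h^{00}$, because $2\alpha-1=0$. The remark about $\mathrm{tr}\,\mathrm{ad}_B$ proves nothing, and the cocycle route is circular. For $X\in\k\cap\h$ one has $\mathcal L_X(f^2g)|_p=f(p)^2\,\mathcal L_Xg|_p$, which is a nonzero multiple of $g_p$ unless $\nabla X|_p\in\so(T_pM)$. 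Hence the restricted cocycle of a locally transitive subalgebra $\k$ is a coboundary exactly when $\k\cap\h\subset\so(T_pM)$, i.e.\ exactly when $B$ is not generated, which is what has to be proved. Cartan's homotopy formula is unavailable since $B\notin\g_1$, and the obstruction has weight zero anyway.

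Moreover, for $\alpha=1$ the transversal direction $V^{\alpha-1}=V^0$ is represented in $\g^0$. (The hypothesis on $\g^0$ must be read as a condition on $\g^0\cap\h$, as in the paper's treatment of this case.) Your $\g_1$ misses this direction, so it is not transitive; you need a $\B$-stable complement $\c$ of $\h\cap\g^0$ in $\g^0$. The paper uses no uniform $\g_1$. In each case it takes the subalgebra generated by $\g^{\alpha-1}$ (or $\c$), $\g^\alpha$ and $\g^{\alpha+1}$, and checks that its isotropy lies in $\so(T_pM)$. This is automatic by weights for $\alpha\in\{1,2\}$ and needs real work for $\alpha=\tfrac12$.

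The same value $\alpha=\tfrac12$ also defeats your weight bookkeeping in (2), in three places:
\begin{enumerate}
\item Because $2\alpha-1=0$, the space $\n^+=\g^\alpha\oplus\g^{\alpha+1}$ need not be an ideal ($[\g^{\alpha-1},\g^\alpha]\subset\g^0\ni B$). So Proposition~\ref{distributionprop} does not show that $\g^\alpha$ preserves the leaves.
\item Because $2\alpha=1\in\sigma_B$, the bracket $[\g^\alpha,\g^\alpha]\subset\g^1\supset\h\cap\s^+$ is not excluded. If it were nonzero, the generated algebra would be $3$-step, with $[[\g^\alpha,\g^\alpha],\g^\alpha]\subset\g^{\alpha+1}$, and Proposition~\ref{pwkilling-prop} would not apply.
\item $[\g^{\alpha-1},\g^{\alpha+1}]\subset\g^1$ must vanish for $\g^{\alpha+1}$ to span $\F^\perp$ away from $p$.
\end{enumerate}
The missing ideas are those of Lemmas~\ref{Lem. existence of foliation} and~\ref{Lem. (1) Heis subalgebra}. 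First, take the transverse representation $\rho:\g\to\mathfrak X(\xi)$ on a curve $\xi$ transverse to $\F^+$. Combined with the bound $\dim\le3$ for finite-dimensional Lie algebras of vector fields on a line, and Jacobi identities involving $W=[V,U]$, it gives $\rho(\g^\alpha)=0$, $[\g^{\alpha-1},\g^\alpha]\subset\h^{00}$ and $[\g^{\alpha-1},\g^{\alpha+1}]=0$. Second, the quotient of a leaf by its null line foliation is a conformally Riemannian manifold. It is conformally flat because of $B$ and homogeneous under $\overline{\n}$, with effective quotient $\overline{\n}/\i_q$ a nilpotent subalgebra of $\mathfrak o(1,n-1)$, hence abelian. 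This forces $[\g^\alpha,\g^\alpha]=0$.

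Finally, your plan for (3) points the wrong way. The statement requires the null line to be $\g^{\alpha+1}$ and the leaves to be tangent to $\g^\alpha\oplus\g^{\alpha+1}$. The value $V^\alpha\oplus V^{\alpha+1}$ of this at $p$ need not be $\h$-invariant when $\h\subset\g^0\oplus\g^{-1}$. So Proposition~\ref{distributionprop} cannot be applied ``near the identity'', and a foliation organised around the $\h$-invariant line $V^{\alpha-1}$ has the wrong null direction. The paper instead exhibits a locally transitive $\k$ with $\k\cap\h\subset\so(T_pM)$ that contains $\n=\g^\alpha\oplus\g^{\alpha+1}$ as an ideal. For $\alpha=\tfrac12$ this again requires $[\g^\alpha,\g^{\alpha-1}]\subset\h^{00}$, obtained from the Jacobi identity. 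Then $q\mapsto\n|_q$ is a $\k$-invariant, hence lightlike, foliation near $p$.
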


\begin{lemma}\label{Lem. CF case}
If $\alpha \notin \{\frac{1}{2}, 1, 2\}$ and $\h \subset \g^0 \oplus \g^{\pm 1}$, then $\m:=\g^{\alpha-1} \oplus \g^\alpha \oplus \g^{\alpha+1}$  is an abelian subalgebra acting locally simply transitively on $M$. Moreover, there is a metric in the conformal class for which $\m$ acts by Killing fields. In particular, the metric is conformally flat.
\end{lemma}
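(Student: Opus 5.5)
The plan is to work entirely with the eigenspace decomposition of $\g$ under the diagonalizable derivation $\B$ from Lemma~\ref{Aut-prop}, together with the bracket rule~(\ref{roots}).

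\emph{Step 1: $\m$ is an abelian subalgebra.} Since $\alpha>0$, the three weights $\alpha-1,\alpha,\alpha+1$ are pairwise distinct. A bracket $[\g^\mu,\g^\nu]$ with $\mu,\nu\in\{\alpha-1,\alpha,\alpha+1\}$ lands in $\g^{\mu+\nu}$, and $\mu+\nu\in\{2\alpha-2,2\alpha-1,2\alpha,2\alpha+1,2\alpha+2\}$. I will show that none of these lies in $\sigma_B=\{\pm1,0,\alpha-1,\alpha,\alpha+1\}$ when $\alpha\notin\{\tfrac12,1,2\}$ (and $\alpha>0$). For this I check the finitely many linear equations $\mu+\nu=\lambda$ with $\lambda\in\sigma_B$:
\begin{itemize}
\item $2\alpha+k=\alpha+j$ with $k\in\{-2,\dots,2\}$, $j\in\{-1,0,1\}$ forces $\alpha=j-k\in\{-3,\dots,3\}$, so within $\alpha>0$ we get $\alpha\in\{1,2,3\}$;
\item $2\alpha+k\in\{-1,0,1\}$ gives $\alpha\in\{\tfrac12,1,\tfrac32,2,\dots\}$.
\end{itemize}
Here I have to be careful: $\alpha=\tfrac32$ and $\alpha=3$ would appear as exceptions. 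They must be excluded by pinning down which pairs actually occur. For example $\alpha=3$ arises only from $2\alpha-2=\alpha+1$, i.e.\ from $[\g^{\alpha-1},\g^{\alpha-1}]$, which is zero since $\g^{\alpha-1}$ is (at most, see Step~2) one-dimensional. Similarly $\alpha=\tfrac32$ arises from $2\alpha-2=1$ or $2\alpha-1=2$ (not a weight), and again uses only $[\g^{\alpha-1},\g^{\alpha-1}]$. After this bookkeeping only $\alpha\in\{\tfrac12,1,2\}$ survive, so $\m$ is abelian and, in particular, a subalgebra. This case check is the main point where care is needed.

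\emph{Step 2: local simple transitivity.} The weights of $\B$ on $\h\subset\g^0\oplus\g^{\pm1}$ are $0,\pm1$. For $\alpha\notin\{1,2\}$ these are disjoint from $\{\alpha-1,\alpha,\alpha+1\}$; this requires $\alpha-1\ne 0,\pm1$, $\alpha\ne1$, and $\alpha+1\neq1$. Hence $\m\cap\h=0$. Because $\B$ induces on $\g/\h\simeq T_pM$ exactly the eigenvalues $\alpha+1,\alpha-1,\alpha$ with multiplicities $1,1,n$, the generalized eigenspaces give $\dim\m=n+2$ and $\g=\h\oplus\m$. The evaluation map $\m\to T_pM$ is therefore an isomorphism, so $\m$ acts locally simply transitively near $p$. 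This also justifies that $\dim\g^{\alpha-1}=1$, as used in Step~1.

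\emph{Step 3: invariant metric and flatness.} Define $\hat g$ near $p$ by declaring the $\m$-fields (equivalently, left-invariant fields of the abelian group $\mathbf M$ acting simply transitively) to have constant inner products equal to $g_p$ on $\m\simeq T_pM$. Each $\m$-field is conformal for $g$, so $\hat g=e^{2\varphi}g$ for some function $\varphi$, because both metrics lie in the conformal class transported by the flows of $\m$. Since $\m$ is abelian, the flows of $\m$ commute with the left-invariant frame and preserve $\hat g$, so the fields of $\m$ are Killing for $\hat g$.

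In the resulting frame, $\hat g$ has constant coefficients with respect to commuting vector fields. Thus $\hat g$ is locally the flat metric on $\R^{n+2}$, and $(M,g)$ is conformally flat near $p$. By local conformal homogeneity it is then conformally flat everywhere.
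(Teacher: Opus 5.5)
Your proof is correct and follows the paper's route: $\m$ is abelian by the bracket rule~(\ref{roots}), $\m\cap\h=0$ gives local simple transitivity, and the trivial stabilizer yields a conformally related metric for which $\m$ is an abelian, simply transitive Killing algebra, hence a flat metric. You build this metric directly from the commuting frame, where the paper cites Proposition~\ref{Prop. isotropy}(1), and you make explicit a point the paper calls ``immediate'': $\dim\g^{\alpha\pm1}=1$ kills $[\g^{\alpha-1},\g^{\alpha-1}]$ and so rules out the spurious exceptions $\alpha=\tfrac32,3$ (the only slip is that the list $\{\tfrac12,1,\tfrac32,2,\dots\}$ in your second bullet should read $\{\tfrac12,1,\tfrac32\}$).
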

\begin{proof}
That $\m$ is abelian is immediate from (\ref{roots}) and then the Lemma follows from $\m_p=\m\cap \h=\{0\}$ and the first statement in Proposition~\ref{Prop. isotropy}.     
\end{proof}
From now on, we assume that $\alpha \in \{\frac{1}{2}, 1, 2\}$. These cases are more involved.

\subsection{Case $\h \subset \g^0 \oplus \g^1$}
When $\mathfrak{h} \subset \mathfrak{g}^0 \oplus \mathfrak{g}^{1}$, the canonical distribution $N^\perp$ defined in  Section~\ref{N-sec} is integrable, and defines a codimension-one lightlike foliation $\F^+$ on $M$, invariant under the action of $\g$. Indeed, define
$$\n^{+}:=\g^\alpha \oplus \g^{\alpha + 1}.$$
Let $\n$ denote the algebra generated by $\n^+$. Since $[\n^+,\n^+]\subset \n^+ + \h$, we have $\n|_p = \n^+ |_p = V^\alpha \+ V^{\alpha +1}=N_p^\perp$, hence $N_p^\perp \simeq \n/\n_p$. The claim then follows from Proposition \ref{distributionprop}.  \bigskip

Let $\xi$ be a local curve through $p$ transverse to $\F^+$.  Because $\F^+$ is $\g$-invariant, $\g$ induces an action on $\xi$. Let $\mathfrak{X}(\xi)$ denote the (infinite dimensional) Lie algebra of vector fields on $\xi$. Then we have a Lie algebra representation 
\begin{equation}\label{Eq. rho}
 \rho: \g \to \mathfrak{X}(\xi),    
\end{equation}
whose kernel consists precisely of the conformal vector fields that are tangent to $\F^+$ in some neighborhood of $p$. \medskip

By studying the action of $\g$ transversely to $\F^+$, that is, on $\xi$, we will show that the subalgebra generated by $\n^+$ preserves each leaf of $\F^+$ individually, and that this action is locally transitive on every leaf.  
In fact, in this situation, this algebra will be of Heisenberg type, and will thus provide the desired foliation $\F:=\F^+$ described in Proposition \ref{Prop. existence of foliation}.

\begin{lemma}\label{Lem. existence of foliation}
 In a neighborhood of $p$, $\n^+$ acts by preserving all the leaves of $\F^+$ individually (equivalently, the subspaces $\n^+|_q \subset T_q M$ are all tangent to $\F^+$ in that neighborhood). Moreover, $\g^{\alpha+1}$ is $1$-dimensional and $\g$-invariant, hence, generates the null line bundle ${\F^+}^\perp$. 
\end{lemma}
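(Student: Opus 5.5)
We prove the statement by studying the transverse representation $\rho:\g\to\mathfrak{X}(\xi)$ from (\ref{Eq. rho}). The key observation is that the image $\rho(\g)$ is a finite-dimensional Lie algebra of vector fields on a one-dimensional manifold. It acts locally transitively at $p$, since $\g$ is transitive on $M$ and $\xi$ is transverse to $\F^+$. By Lie's classification of finite-dimensional Lie algebras of vector fields on the line, $\dim\rho(\g)\le 3$, and in suitable local coordinates $\rho(\g)$ is contained in $\langle \partial_s, s\partial_s, s^2\partial_s\rangle\simeq\mathfrak{sl}(2,\R)$. Moreover, the grading derivation $\B$ induces a grading on $\rho(\g)$, since $\ker\rho$ is an ideal stable under all automorphisms preserving $\F^+$. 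This holds because $\F^+$ is canonical, and $\B$ is a limit of polynomials in $\ad_B$, so it preserves every $\ad_B$-stable ideal. Consequently $\rho(\g)$ inherits an eigenspace decomposition with eigenvalues in $\sigma_B$, compatible with (\ref{roots}).

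We then carry out the eigenvalue bookkeeping. A graded subalgebra of $\mathfrak{sl}(2,\R)$ with respect to a semisimple derivation has at most three nonzero graded pieces, of degrees $-\lambda,0,\lambda$ for some $\lambda$. The transversal direction at $p$ is $V^{\alpha-1}$, so the piece $\rho(\g^{\alpha-1})$ is nonzero and nonvanishing at $p$. On the other hand, $\rho(\h)\subset\rho(\g^0\oplus\g^1)$ consists of fields vanishing at $p$. If $\rho(\g^\alpha)$ or $\rho(\g^{\alpha+1})$ were nonzero, then together with $\rho(\g^{\alpha-1})$ we would obtain degrees $\alpha-1$ and $\alpha$ (or $\alpha+1$) simultaneously among the at most three admissible degrees $\{-\lambda,0,\lambda\}$. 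For $\alpha\in\{\frac12,1,2\}$ a short case check rules this out: the nonzero degrees would differ by $1$ or $2$ but cannot be of the form $\pm\lambda$ unless one of them vanishes, and $\alpha,\alpha+1\neq0$.

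It remains to exclude the possibility that $\rho(\g^\alpha)$ is nonzero but vanishes at $p$. For this we compare with the tangent data. Elements of $\g^\alpha$ and $\g^{\alpha+1}$ evaluate at $p$ into $V^\alpha\oplus V^{\alpha+1}=N^\perp_p$, which is tangent to $\F^+$. So $\rho(\n^+)$ consists of fields vanishing at $p$ and having nonzero $\B$-weight. In the $\mathfrak{sl}(2)$ model such fields are $s\partial_s$ (weight $0$) or $s^2\partial_s$ (weight $2\lambda$ when $\partial_s$ has weight $-\lambda$). Hence a nonzero weight-$\mu$ field vanishing at $p$ forces $\mu=2(\alpha-1)$ relative to the weight $\alpha-1$ of $\partial_s$. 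The equations $2(\alpha-1)=\alpha$ and $2(\alpha-1)=\alpha+1$ give $\alpha=2$ and $\alpha=3$ respectively. The case $\alpha=2$ with $\g^\alpha$ is the delicate one. There we would use $[\g^{\alpha-1},\g^\alpha]\subset\g^{2\alpha-1}$, which is nonzero only if $2\alpha-1\in\sigma_B$. Combined with $[\partial_s,s^2\partial_s]=2s\partial_s\neq0$, this forces a nonzero bracket landing in weight $0$, whereas $2\alpha-1=3\notin\sigma_B$ when $\alpha=2$. This is a contradiction. Hence $\rho(\n^+)=0$, so $\n^+$ preserves the leaves near $p$.

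For the second claim, $\g^{\alpha+1}$ evaluates injectively into $T_pM$: it meets $\h\subset\g^0\oplus\g^1$ trivially because $\alpha+1\notin\{0,1\}$. Its image lies in $V^{\alpha+1}$, so $\dim\g^{\alpha+1}\le1$, and transitivity makes it exactly $1$. For $\g$-invariance, I expect the main obstacle to be showing that $\g^{\alpha+1}$ spans $N=(\F^+)^\perp$ at every nearby point, not only at $p$. I would argue that $[\g,\g^{\alpha+1}]$ stays inside $\g^{\alpha+1}$ modulo fields whose values lie in $N$. If $\g^{\alpha+1}|_q$ failed to be null or orthogonal to $\F^+$ at some $q$, then $\ad$-invariance of $N$ under $\g$ would contradict the weight count, since $\g^{\alpha+1}$ has the top weight and no other weight space can be bracketed with it into $N$. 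By Proposition \ref{distributionprop}(2), applied to the line $V^{\alpha+1}$ (which is $\h$-invariant), this identifies the line bundle generated by $\g^{\alpha+1}$ with $(\F^+)^\perp$.
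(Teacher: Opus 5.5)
Your transverse-grading argument does not handle $\alpha=\tfrac12$, which is exactly the case where the paper has to work. Two computational slips hide this.

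First, the ``short case check'' is false. For $\alpha=\tfrac12$ the degrees $\alpha-1=-\tfrac12$ and $\alpha=\tfrac12$ are precisely of the form $-\lambda,\lambda$ with $\lambda=\tfrac12$. For $\alpha=1$ the degrees $0,1$ fit the pattern $0,\lambda$. So that step only excludes $\rho(\g^\alpha)\neq 0$ when $\alpha=2$.

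Second, if $\partial_s$ has weight $-\lambda$, then $s^2\partial_s$ has weight $+\lambda$, not $2\lambda$, because $[\partial_s,s^2\partial_s]=2s\partial_s$ has weight $0$. Hence a nonzero field of nonzero weight vanishing at $\xi(p)$ has weight $1-\alpha$. The equation $1-\alpha=\alpha$ gives $\alpha=\tfrac12$, not $\alpha=2$. Your treatment of the phantom case $\alpha=2$ also misreads $\sigma_B$, since there $2\alpha-1=3=\alpha+1\in\sigma_B$.

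In fact, for $\alpha=\tfrac12$ take $U\in\g^{\alpha-1}$, $X\in\g^\alpha$ and $\rho(U)=\partial_s$, $\rho(B)=\tfrac12 s\partial_s$, $\rho(X)=s^2\partial_s$, $\rho(\g^1)=\rho(\g^{\alpha+1})=0$. This configuration satisfies every constraint you use: Lie's bound, the $\B$-grading, and the vanishing of $\rho(\h)$ and $\rho(\n^+)$ at $\xi(p)$. So no argument confined to the one-dimensional transverse picture can conclude. For $\alpha\in\{1,2\}$ you do not need $\rho$ at all: all weights are $\ge 0$, so $\n^+$ and $\g^{\alpha+1}$ are ideals by degree count, which is the paper's argument.

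The missing input is the bracket structure of $\g$ itself. The paper writes $[X,U]=\lambda_X B+\varphi$ with $\varphi\in\h^{00}$. It observes that $\rho(\varphi)$ is never a nonzero multiple $a\rho(B)$: otherwise $[\varphi-aB,U]=\tfrac a2 U$ would lie in the ideal $\ker\rho$, contradicting $\rho(U)\neq 0$. In the configuration above, $\rho([X,U])=-2s\partial_s=-4\rho(B)$, so this forces $\lambda_X=-4\neq 0$. The paper then eliminates $\lambda_X$ using $V\in\g^{\alpha+1}$ and $W=[V,U]\in\g^1\subset\h$. The Jacobi identity gives $[W,X]=\tfrac32\lambda_X V$, and the Jacobi identity for $(e_n,U,W)$ forces $W=0$. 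Hence $[\g^{\alpha-1},\g^{\alpha}]\subset\h^{00}$, and then $\rho(\g^\alpha)=0$.

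The same identity $W=0$, i.e.\ $[\g^{\alpha-1},\g^{\alpha+1}]=0$, is what makes $\g^{\alpha+1}$ an ideal when $\alpha=\tfrac12$. Your ``top weight'' heuristic cannot give it, since $-\tfrac12+\tfrac32=1$ and $\g^1\subset\h$ may be nonzero. Without it, Proposition~\ref{distributionprop}(2) applied to $V^{\alpha+1}$ only says that the null line at $\psi(p)$ is spanned by $\Ad_\psi(\g^{\alpha+1})|_{\psi(p)}$, not by $\g^{\alpha+1}|_{\psi(p)}$. Your argument that $\dim\g^{\alpha+1}=1$ is fine.
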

\begin{proof} 
\noindent \textbf{Case} $\boldsymbol{\alpha \in \{1,2\}}$.   In this case, $\mathfrak{n}^+$ is an ideal of $\mathfrak{g}$. Hence, by (3) in Proposition~\ref{Prop. existence of foliation}, the $\mathfrak{g}$–invariant foliation tangent to $\mathfrak{n}^+|_p$ at $p$, namely $\F^+$, is everywhere tangent to the distribution $q \mapsto \n^+|_q$. Moreover, $\g^{\alpha+1}$ is $\g$-invariant, hence everywhere tangent to $\F^+$. \medskip

\noindent \textbf{Case} $\boldsymbol{\alpha = \frac{1}{2}}$. 
In this case, $\g^0 = \R B \oplus \h^{00}$, and $\h=\g^0 \oplus \g^1=\R B \oplus \h^{00} \oplus \g^1$. 
We will show that $\rho(\g^\alpha)=0$. 
Let $U$ be a generator of $\g^{\alpha-1}$, and let $X \in \g^{\alpha}$. Then 
\[[B,U]=-\tfrac{1}{2} U, \quad [B,X]=\tfrac{1}{2} X, \quad [X,U] \subset \R B \oplus \h^{00}.\] Since $U$ is transverse to $\F$ in a neighborhood of $p$, its image under $\rho$ is non-zero. Moreover, $[\rho(B),\rho(U)]=-\frac{1}{2} \rho(U)$, so $\rho(B)$ is also non-zero  and not collinear with $\rho(U)$. The following observation will be used several times:  given $\varphi \in \h^{00}$, the element $\rho(\varphi)$ cannot equal  $a \rho(B)$ for any $a \in \R \smallsetminus \{0\}$, since $\rho(B)$ acts non-trivially on $\g^{\alpha -1}$,  whereas $\h^{00}$ acts trivially on it. 
A finite dimensional Lie algebra of vector fields on $\R$ has maximal dimension $3$. One can check that if $\rho(X) \neq 0$ for some $X \in \g^\alpha$,  then $\rho(\g^\alpha) \subset \R \rho(X)$. Hence, $\rho(\g^\alpha)$ has dimension at most one. 
Thus, with $\mathrm{dim}(\g^\alpha)=n$, there exist at least $(n-1)$ linearly independent conformal vector fields $e_1, \dots, e_{n-1} \in \g^{\alpha}$ such that $\rho(e_i)=0$ for all $i=1,\dots,n-1$.
Since $[\g^\alpha, \g^{\alpha -1}] \subset \h^0=\R B \oplus \h^{00}$, this, together with the observation above, implies that $[e_i,U] \in \h^{00}$, for all $i=1,\dots,n-1$. 
Let $V$ be a generator of $\g^{\alpha +1}$, and set $W:=[V,U] \in \h^1$. The Jacobi identity gives $[W,X]=\frac{3}{2} \lambda_X V$, where  $\lambda_X \in \R$ is defined by the condition $[X,U]-\lambda_X B \in \h^{00}$. In particular, $[W,e_i]=0$ for all $i=1,\dots,n-1$. 
Hence, either $W=0$, or $[W,\g^{\alpha-1}]= \R e_{n}$, where $e_{n} \in \g^\alpha$ is nonzero and satisfies $[W,e_{n}] \neq 0$. Assume we are in the latter case. Then the Jacobi identity applied to the triple $(e_{n}, U, W)$ yields a contradiction, which implies that $W=0$. Consequently, $[e_{n}, U] \in \h^{00}$.  
If $[\rho(e_{n}), \rho(U)] = 0$, then $\rho(e_{n}) = 0$, and the argument is complete. Indeed, since $\rho(U)$ acts freely on $\xi$ near $\xi(p)$ and $\rho(e_{n})$ vanishes at $\xi(p)$, the commutation relation implies that $\rho(e_{n})$ vanishes in a neighborhood of $\xi(p)$; equivalently, $\rho(e_{n})$ is tangent to $\F^+$ near $p$.
We now assume that $[\rho(e_{n}), \rho(U)] \neq 0$. In particular, $\rho(e_{n}) \neq 0$. Since $\rho(\mathfrak{g})$ is a Lie algebra of vector fields of maximal dimension $3$, we  must have $[\rho(e_{n}),\rho(U)]=a \rho(B)$, for some $a \in \R \smallsetminus \{0\}$. 
This, however, contradicts the observation above. Therefore, $\rho(e_{n})=0$, and hence $\rho(\g^\alpha)=0$. Finally, $\g^\alpha$ is tangent to $\F^+$ in a neighborhood of $p$. Moreover, we have also shown that 
\begin{equation}\label{Eq. bracket}
 [\g^{\alpha-1},\g^\alpha] \subset \h^{00}.   
\end{equation} 
It remains to show that $\g^{\alpha+1}$ is $\g$-invariant. All we need to show is that $[\g^{\alpha-1},\g^{\alpha+1}]=0$. 
Since $[\g^{\alpha-1},\g^\alpha] \subset \h^{00}$, the Jacobi identity applied to the triple $(\g^{\alpha-1},\g^\alpha,\g^{\alpha+1})$ yields 
\begin{equation}\label{Eq. bracket_1}
[\g^{\alpha-1},\g^{\alpha+1}]=0.   
\end{equation}  
\end{proof}

\begin{corollary}\label{Cor. ideal defining F}
The foliation  $\F^+$ is  defined by the distribution $E$ given by $q \mapsto E(q) = \l|_q$, where $\mathfrak{l}$ is the ideal of $\mathfrak{g}$ generated by $\mathfrak{n}^+$. 
\end{corollary}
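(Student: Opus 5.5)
The plan is to show two inclusions between the distribution $E$, with $E(q)=\l|_q$, and the tangent distribution $T\F^+$, in a neighborhood of $p$ and then everywhere. Here $\l$ is the ideal of $\g$ generated by $\n^+=\g^\alpha\oplus\g^{\alpha+1}$.

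First, $E\supset T\F^+$. Since $\l\supset\n^+$, we have $\l|_p\supset\n^+|_p=V^\alpha\oplus V^{\alpha+1}=N^\perp_p=T_p\F^+$. Because $\l$ is an ideal, $\Ad_\psi(\l)=\l$ for every $\psi\in\mathbf G$ acting locally. By (\ref{push-ad}), $\l|_{\psi(p)}=\d\psi_p(\l|_p)$, so $\dim\l|_q$ is constant. Also $\d\psi_p$ maps $T_p\F^+$ onto $T_{\psi(p)}\F^+$, by $\g$-invariance of $\F^+$. Hence $E(q)\supset T_q\F^+$ for all $q$, using the transitivity of $\g$.

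Second, $E\subset T\F^+$. It suffices to show $\l|_p\subset N_p^\perp$; equivariance then transports this to every point as above. Consider the representation $\rho:\g\to\mathfrak X(\xi)$ of (\ref{Eq. rho}). Its kernel $\ker\rho$ is an ideal of $\g$, consisting of the conformal vector fields tangent to $\F^+$ near $p$. By Lemma~\ref{Lem. existence of foliation}, $\n^+\subset\ker\rho$. Since $\ker\rho$ is an ideal containing $\n^+$, it contains the ideal generated by $\n^+$, namely $\l\subset\ker\rho$. Therefore every element of $\l$ is tangent to $\F^+$ near $p$, which gives $\l|_p\subset T_p\F^+$, and in fact $\l|_q\subset T_q\F^+$ for $q$ near $p$.

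Combining the two inclusions gives $E=T\F^+$ near $p$. Since both distributions are $\g$-invariant and $\g$ acts transitively, equality holds on all of $M$. One could also invoke Proposition~\ref{distributionprop}(2),(3), with $V_p=N_p^\perp\simeq\l/\l_p$, to see that the leaves are the $\mathbf L$-orbits.

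The main obstacle is checking that $\ker\rho$ is genuinely an ideal. This needs $\rho$ to be a Lie algebra homomorphism, which holds because $\F^+$ is $\g$-invariant: each $X\in\g$ maps leaves to leaves and so induces a well-defined vector field on the local leaf space, which we identify with $\xi$. Once this is in place, the rest is the formal equivariance argument.
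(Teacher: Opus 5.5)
Your proof is correct and follows the paper's argument. In both, the key step is that the conformal fields tangent to $\F^+$ near $p$ (your $\ker\rho$, the paper's $\k$) form an ideal of $\g$ containing $\n^+$ by Lemma~\ref{Lem. existence of foliation}, hence containing $\l$, after which $\g$-equivariance carries $\l|_q = T_q\F^+$ to all of $M$. The only difference is that you make explicit the reverse inclusion $N_p^\perp=\n^+|_p\subset\l|_p$, which the paper uses tacitly when it identifies $\F^+$ with the $\g$-invariant foliation tangent to $\l|_p$.
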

\begin{proof}
Since $\F^+$ is $\mathfrak{g}$–invariant, the subalgebra $\mathfrak{k} \subset \mathfrak{g}$ consisting of conformal vector fields tangent to $\F^+$ in a neighborhood of $p$ is an ideal of $\mathfrak{g}$. We have shown that the conformal vector fields in $\mathfrak{n}^+$ are tangent to $\F^+$ in a neighborhood of $p$; equivalently, $\mathfrak{n}^+ \subset \mathfrak{k}$. It follows that the ideal $\mathfrak{l}$ generated by $\mathfrak{n}^+$ is contained in $\mathfrak{k}$, and hence that $\F^+$ is tangent to the distribution $E$ in a neighborhood of $p$. Consequently, the $\mathfrak{g}$–invariant foliation tangent to $\mathfrak{l}|_p$ at $p$, which coincides with $\F^+$, is everywhere tangent to the distribution $E$. 
\end{proof}

\begin{lemma}\label{Lem. (1) Heis subalgebra}
In $\g$, $\n^+$ is a  subalgebra   
of Heisenberg type, acting locally transitively on each leaf of $\F^+$, and containing  $\g^{\alpha+1}$ in its center. 
Moreover,  there exists  a metric in the conformal class,  admitting an algebra of 
Killing vector fields that acts locally transitively and contains $\n^+$.
Finally, the null line bundle ${\F^+}^{\perp}$ is (everywhere) generated by $\g^{\alpha+1}$.
\end{lemma}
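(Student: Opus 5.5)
The plan is to read off the bracket relations of $\n^+=\g^\alpha\oplus\g^{\alpha+1}$ from the eigenvalue grading (\ref{roots}) of the diagonalizable derivation $\B$, treating the three admissible values $\alpha\in\{\tfrac12,1,2\}$ separately, and then to combine this with Lemma~\ref{Lem. existence of foliation} and Corollary~\ref{Cor. ideal defining F}.

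\textbf{Bracket structure.} For $\alpha=2$ we have $[\g^2,\g^2]\subset\g^4$, $[\g^2,\g^3]\subset\g^5$ and $[\g^3,\g^3]\subset\g^6$. All of these vanish since $4,5,6\notin\sigma_B$, so $\n^+$ is abelian, which is a degenerate Heisenberg type. For $\alpha=1$ we get $[\g^1,\g^1]\subset\g^2=\g^{\alpha+1}$ and $[\g^1,\g^2]\subset\g^3=0$, $[\g^2,\g^2]\subset\g^4=0$. Hence $\n^+$ is a subalgebra, it is two-step nilpotent, and its derived algebra lies in the centre $\supset\g^{\alpha+1}$, which is again Heisenberg type. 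For $\alpha=\tfrac12$ we have $[\g^{1/2},\g^{3/2}]\subset\g^2=0$ and $[\g^{3/2},\g^{3/2}]\subset\g^3=0$. However, $[\g^{1/2},\g^{1/2}]\subset\g^1\subset\h$, and this bracket must be shown to vanish (or at least to lie in $\g^{\alpha+1}$, which it cannot by grading).

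\textbf{Main obstacle: the case $\alpha=\tfrac12$.} I expect proving $[\g^{1/2},\g^{1/2}]=0$ to be the hardest step. For $X,X'\in\g^\alpha$ put $Y=[X,X']\in\h^1\subset\s^1$, and let $U$ generate $\g^{\alpha-1}$. By (\ref{Eq. bracket}) we have $[X,U]=:\varphi$ and $[X',U]=:\varphi'$ in $\h^{00}$. The Jacobi identity then gives $[Y,U]=[\varphi,X']-[\varphi',X]\in\g^\alpha$. I would compare this with the linear action of $Y$ on $T_pM\simeq\g/\h$. There $Y\in\s^1$ maps $V^{\alpha-1}\to V^\alpha$ via a vector $y\in\R^n$, and $\varphi,\varphi'\in\so(n)$ act on $V^\alpha$. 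Using that $\varphi,\varphi'$ are themselves determined linearly and skew-symmetrically by $X,X'$, the resulting identity $y=\varphi x'-\varphi' x$ should force $y=0$, a computation similar to the $W=[V,U]$ argument in Lemma~\ref{Lem. existence of foliation}. Since $\rho_p$ is injective, $y=0$ gives $Y=0$. If this direct route fails, the fallback is a second Jacobi identity with $V\in\g^{\alpha+1}$, using $[Y,V]\in\g^{5/2}=0$.

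\textbf{Transitivity on leaves and the null line.} By Lemma~\ref{Lem. existence of foliation}, $\n^+$ is tangent to $\F^+$ near $p$, and $\n^+|_p=V^\alpha\oplus V^{\alpha+1}=T_p\F^+$. By lower semicontinuity of rank, $\n^+|_q=T_q\F^+$ for $q$ near $p$, so $\n^+$ acts locally transitively on the leaves. Lemma~\ref{Lem. existence of foliation} also shows that $\g^{\alpha+1}$ is one-dimensional and $\g$-invariant. Its value at $p$ is $V^{\alpha+1}=N_p={T_p\F^+}^\perp$, so by $\g$-transitivity and (\ref{push-ad}) it generates ${\F^+}^\perp$ everywhere.

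\textbf{Killing metric.} I would set
\[
\g_1:=\h^{00}\oplus\bigoplus_{\mu\in\sigma_B\setminus\{0\}}\g^\mu,
\]
which is $\g$ with the direction $\R B$ removed. It is a subalgebra because the only eigenvalue pair summing to $0$ is $(\alpha-1,\alpha)$ when $\alpha=\tfrac12$, and there (\ref{Eq. bracket}) gives $[\g^{\alpha-1},\g^\alpha]\subset\h^{00}$. Moreover $\g_1$ contains $\n^+$ and is locally transitive at $p$, since it contains $\g^{\alpha-1}\oplus\g^\alpha\oplus\g^{\alpha+1}$, which surjects onto $T_pM$. Its stabilizer is $\h^{00}\oplus(\h\cap\g^1)\subset\so(T_pM)$.

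Transporting $g_p$ by the local group of $\g_1$, exactly as in the proof of Proposition~\ref{Prop. isotropy}(1), yields a well-defined conformally related metric $\hat g$ near $p$. The isotropy is connected at the Lie algebra level and acts by isometries, so $\hat g$ does not depend on the chosen transporting element. For this $\hat g$, $\g_1$ acts by Killing fields.
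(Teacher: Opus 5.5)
Your grading arguments for $\alpha\in\{1,2\}$, the local transitivity on leaves, the null line bundle and the transport construction of $\hat g$ match the paper. There is, however, a genuine gap in the case $\alpha=\tfrac12$, at exactly the step you flag. There $[\g^{1/2},\g^{1/2}]\subset\g^1\subset\h$, so without proving this bracket vanishes $\n^+$ is not even a subalgebra, and your argument does not prove it.

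To see why, normalize $\rho_p$ so that $[Y,Z](p)=\rho_p(Y)Z(p)$ for $Y\in\h$. Write $X_x\in\g^{1/2}$ for the element with $X_x(p)=x$, set $\Phi(x):=[X_x,U]\in\h^{00}$, and let $\beta(x,x'):=\rho_p([X_x,X_{x'}])u$ (your $y$). The Jacobi identity for $(X,X',U)$ gives only $\beta(x,x')=\Phi(x)x'-\Phi(x')x$. This merely expresses $\beta$ through $\Phi$ and imposes no condition. For an arbitrary linear $\Phi:\R^n\to\so(n)$ the right-hand side is in general nonzero; for $n=3$ and $\Phi(x)=x\times(\,\cdot\,)$ it equals $2\,x\times x'$. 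Skew-symmetry of the $\Phi(x)$ does not change this. Your fallback is vacuous: in the Jacobi identities for $(X,X',V)$ or $(Y,U,V)$, every term lies in some $\g^\mu$ with $\mu\geq 2$ or involves $[U,V]=0$.

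The paper closes this step geometrically. The local quotient $T_q$ of the $\F^+$-leaf by the lines of $\g^{\alpha+1}$ is a conformally Riemannian manifold, homogeneous under $\overline{\n}$, with isotropy acting trivially. Since $B$ fixes $p$ and acts at that point of $T_p$ as a pure homothety, every $T_q$ is conformally flat. Hence $\overline{\n}/\mathfrak{i}_q$ is a nilpotent subalgebra of a conformal algebra $\so(1,\cdot)$, or $2$-dimensional when $n=2$, and is therefore abelian, which rules out a nonzero $[x,y]$.

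If you want to stay algebraic, you need two further Jacobi identities.
\begin{itemize}
\item For $Y\in\h^1$ with $\rho_p(Y)u=y$, take the identity for $(X_x,U,Y)$ and use $[U,\g^{\alpha+1}]=0$ and $\Phi(x)u=0$. This gives $\beta(x,y)=-\Phi(x)y$ for $y\in L:=\rho_p(\h^1)u$. Combined with your identity, $\Phi(y)x=2\Phi(x)y$.
\item Take the cyclic identity for $(X_x,X_{x'},X_{x''})$, read in $\g^{3/2}$ via $\rho_p(Y)x''=-\langle y,x''\rangle v$ (with $v$ spanning $V^{3/2}$, $g_p(u,v)=1$). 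It says $\langle\Phi(x)x',x''\rangle$ has vanishing cyclic sum, whence $\langle\beta(x,x'),z\rangle=-\langle\Phi(z)x,x'\rangle$.
\end{itemize}
Since $\beta$ takes values in $L$, $\Phi$ vanishes on $L^\perp$. Then for $y\in L$ you get $\Phi(y)x=0$ for $x\in L^\perp$, and $\Phi(y)x=4\Phi(y)x$ for $x\in L$. So $\Phi=0$, hence $\beta=0$.

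A smaller point concerns $\alpha=1$. There your $\g_1$ is not locally transitive, because $\g^{\alpha-1}=\g^0$. You discarded all of $\g^0$ except $\h^{00}$, including the direction that evaluates onto $V^{\alpha-1}$. You must keep a complement $\mathfrak{c}$ of $\h\cap\g^0$ in $\g^0$; the paper uses $\n^+\oplus\mathfrak{c}$.
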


\begin{proof}
\noindent \textbf{Case}  $\boldsymbol{\alpha = 2}$. In this case, $\n^+$ is an abelian ideal acting transitively on each leaf of $\F^+$. Let $\m :=  \n^+ \oplus \g^{\alpha - 1}$. 
Then $\m$ is an ideal of $\g$ such that $\m_p = \h \cap \s^+ \subset \so(T_pM)$, and $\m|_p$ spans $T_p M$. Hence, $\m$ acts transitively on $M$, and there exists a metric in the conformal class for which $\m$ is an algebra of Killing fields.  

\noindent \textbf{Case} $\boldsymbol{\alpha = 1}$. In this case, $\mathfrak{n}^+$ is a $2$-step nilpotent ideal acting transitively on each leaf of $\F^+$. Let $\mathfrak{c}$ be a complement of $\mathfrak{h} \cap \mathfrak{g}^{\alpha - 1}$ in $\mathfrak{g}^{\alpha - 1}$, and define
$
\m :=  \n^+ \oplus \c
$. Since $\c$ is of  
dimension~$1$,
 $\mathfrak{m}$ is a  subalgebra of $\mathfrak{g}$. By construction,  $\m_p = \h \cap \s^+ \subset \so(T_p M)$  and $\mathfrak{m}|_p$ spans $T_p M$. Hence, $\mathfrak{m}$ is a Killing algebra of a metric in the conformal class, acting locally transitively on $M$. 

\noindent \textbf{Case} $\boldsymbol{\alpha = \frac{1}{2}}$.  
It follows from Corollary \ref{Cor. ideal defining F} that  the foliation $\F^+$ is everywhere tangent to the subalgebra generated by $\n^+$, i.e.~to the nilpotent algebra $\n:= \n^+ \oplus\c$, where $\c:=[\g^\alpha, \g^\alpha] \subset \g^1$ is abelian, and which acts locally transitively on each leaf of $\F^+$.  
We will show that $\n$ is abelian, by  proving that $[\g^\alpha, \g^\alpha] = 0$, or in other words, we will show that $\n^+$ already is an abelian subalgebra.

For any subalgebra $\k \subset \g$, denote by $\overline{\k}$ its projection in $\overline{\g} := \g / \g^{\alpha+1}$.
By Lemma \ref{Lem. existence of foliation}, the $\F^+$-leaves are foliated by the radical foliation tangent to $\g^{\alpha+1}$.
For each point $q \in M$, define the quotient space $T_q$ (depending on $q$) of the $\F$-leaf through $q$, restricted to a neighborhood of $q$, by the radical foliation. Then $T_q$ is a conformally Riemannian manifold, homogeneous under the action of $\overline{\n}$.   
Denote by $\i_q \subset \overline{\n}$ the isotropy subalgebra at $q$. We will show that $\i_q$ consists precisely of the elements of $\overline{\n}$ that act trivially on $T_q$. For $q = p$, we have $\i_p = \overline{\c} \subset \overline{\g^1}$. Since $[\n, \g^1] \subset \g^{\alpha+1}$, it follows that $[\overline{\n}, \overline{\g^1}] = 0$, which shows that $\i_p=\overline{c} \subset \overline{\g^1}$ vanishes on all of $T_p$, hence acts trivially on $T_p$.
For $q \neq p$, the isotropy $\i_q$ is conjugate to $\i_p$ by an element of $\overline{\g}$, and hence also acts trivially on $T_q$. This shows that $\i_q$ is, in fact, an ideal of $\overline{\n}$ for any $q$. Moreover, the quotient $\overline{\n} / \i_q$ acts faithfully on $T_q$.
Next, observe that the metric on $T_p$ is conformally flat due to the action of $e^{tB}$: indeed, $B(p) = 0$, and $\ad_B$ acts on the tangent space of $T_p$ at $p$ as $\alpha I_{n-2}$. Consequently, the same property holds for $T_q$ for any $q$. In dimension $n - 2 \ge 3$, this implies that $\overline{\n} / \i_q$ embeds into $\mathfrak{o}(1, n - 1)$ as a nilpotent subalgebra. However, every nilpotent subalgebra of $\mathfrak{o}(1, n - 1)$ is abelian, so $\overline{\n} / \i_q$ must be abelian. In the case $n - 2 = 2$, the algebra $\overline{\n} / \i_q$ is $2$-dimensional and nilpotent, hence also abelian.
Assume now that there exist $x, y \in \g^\alpha$ such that $z := [x, y] \in \g^1$ is nonzero. Then there exists a point $q \in M$ such that $\overline{z}(q) \neq 0$, for otherwise $z$ would belong to $\g^{\alpha+1}$. Consequently, $z$ projects nontrivially onto $\overline{\n} / \i_q$, which yields a contradiction.  
Finally, $\n =\n^+= \g^\alpha \oplus \g^{\alpha+1}$ is an abelian subalgebra of $\g$, acting locally simply transitively on each leaf of $\F$. 

Set $\m:=   \n \oplus \g^{\alpha-1}= \g^{\alpha-1}\+  \g^\alpha \oplus \g^{\alpha+1}$. 
It follows from the discussion above and from (\ref{Eq. bracket})-(\ref{Eq. bracket_1}) that 
$[\m,\m]=[\g^{\alpha-1},\g^{\alpha}]\subset \h^{00}$. Thus, 
the ideal $\k$ generated by $\m$  satisfies $\k \cap \h \subset \so(T_pM)$.  Moreover, $\k|_p$ spans $T_p M$. Hence, $\k$ acts transitively on $M$, and there is a metric in the conformal class for which $\k$ is an algebra of Killing fields. 
\end{proof}

\subsection{Case $\h \subset \g^0 \oplus \g^{-1}$} 
In this case, we will proceed differently from the previous one.  
We will define, in a neighborhood of  $p \in M$,  a foliation that is not necessarily invariant under all local conformal transformations.
Although one could see whether this foliation extends globally, this is not required for our purposes. The strategy is then to find:

\begin{enumerate}
\item 
a subalgebra $\k\subset\g$ such that $\k$ acts locally transitively and with stabiliser $\k_p$ contained in $\so(T_pM)$. This ensures, as before, the existence of a metric in the conformal class with respect to which $\k$ is an algebra of Killing fields;
\item 
an ideal $\n$ of $\k$ that is of Heisenberg type whose center contains a one-dimensional subspace $\z$ such that  $\z.p$ is null and $\n.p=(\z.p)^\perp$. This ensures that $q \mapsto \n|_q$ defines (in a neighborhood of $p$) a distribution which is integrable, tangent to a $\k$-invariant foliation $\G$. This foliation is therefore everywhere lightlike, and $\n$ acts locally transitively on its leaves. 
\end{enumerate}

\begin{lemma}\label{Lem. (2) Heis subalgebra}
Assume $\alpha \in \{\tfrac{1}{2}, 1, 2\}$ and $\h \subset \g^0 \oplus \g^{-1}$,  and let 
 $\n:=\g^\alpha \oplus \g^{\alpha+1}$. Then the distribution 
$
q \mapsto \n|_q
$
defines a codimension-one lightlike foliation $\G$. 
Moreover, 
\begin{enumerate}
    \item $\n$ is of Heisenberg type,
    \item the null line bundle $\G^\perp$ is tangent to the central one-dimensional subspace $\g^{\alpha+1}$, and $(\g^{\alpha+1}|_p)^\perp = \n|_p$.
    \item there exists a metric in the conformal class whose algebra of Killing vector fields acts locally transitively and contains $\n$.
    \end{enumerate}
\end{lemma}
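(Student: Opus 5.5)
Throughout, I work with the $\B$-eigenspace decomposition of $\g$ from Lemma~\ref{Aut-prop}, using the bracket rule (\ref{roots}). Here $\h\subset\g^0\oplus\g^{-1}$ and $\g^0=\R B\oplus\h^{00}$. The eigenvalues are $\sigma_B=\{-1,0,\alpha-1,\alpha,\alpha+1\}$ with $\alpha\in\{\tfrac12,1,2\}$. The evaluation map at $p$ sends $\g^{\alpha+1},\g^\alpha,\g^{\alpha-1}$ modulo $\h$ onto $V^{\alpha+1},V^\alpha,V^{\alpha-1}$.

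\textbf{First step: which brackets inside $\n$ survive.} Set $\n=\g^\alpha\oplus\g^{\alpha+1}$. Its brackets land in $\g^{2\alpha}\oplus\g^{2\alpha+1}\oplus\g^{2\alpha+2}$.
\begin{itemize}
\item $2\alpha+2\notin\sigma_B$ in all three cases, so $[\g^{\alpha+1},\g^{\alpha+1}]=0$.
\item $2\alpha+1\in\sigma_B$ forces $2\alpha+1=\alpha+1$, i.e.\ $\alpha=0$, which is excluded. Hence $[\g^\alpha,\g^{\alpha+1}]=0$, and $\g^{\alpha+1}$ is central in $\n$.
\item $[\g^\alpha,\g^\alpha]\subset\g^{2\alpha}$, and $2\alpha\in\sigma_B$ exactly when $2\alpha=\alpha+1$, i.e.\ $\alpha=1$. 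In that case the bracket lands in $\g^{\alpha+1}$.
\end{itemize}
So for $\alpha\in\{\tfrac12,2\}$, $\n$ is abelian, and for $\alpha=1$ it is $2$-step nilpotent with $[\n,\n]\subset\g^{\alpha+1}$. In every case $\n$ is a subalgebra of Heisenberg type with $\g^{\alpha+1}$ central. Note that neither $\alpha$ nor $\alpha+1$ is $0$ or $-1$, so $\n\cap\h=0$. Hence $\n$ acts locally freely near $p$, and $\dim\g^{\alpha+1}=1$, $\dim\g^\alpha=n$, with $\n|_p=V^\alpha\oplus V^{\alpha+1}=(V^{\alpha+1})^\perp$. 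This gives (2) at $p$. Since $\n$ is a subalgebra acting freely, $q\mapsto\n|_q$ is an involutive rank-$(n+1)$ distribution near $p$, so it defines a local foliation $\G$ by the $\n$-orbits.

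\textbf{Second step: the transitive algebra $\k$.} Choose a complement $\c$ of $\h\cap\g^{\alpha-1}$ in $\g^{\alpha-1}$; it is one-dimensional, because $\g^{\alpha-1}$ surjects onto $V^{\alpha-1}$. The natural candidate is $\k=\n\oplus\c$ or the subalgebra it generates. Its brackets are:
\begin{itemize}
\item $[\c,\g^\alpha]\subset\g^{2\alpha-1}$;
\item $[\c,\g^{\alpha+1}]\subset\g^{2\alpha}$;
\item $[\c,\c]=0$, since $\c$ is one-dimensional.
\end{itemize}
For $\alpha=1$: $[\c,\g^1]\subset\g^1$ and $[\c,\g^2]\subset\g^2$, so $\k=\c\ltimes\n$ is a subalgebra with $\k\cap\h=0$.
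For $\alpha=2$: $[\c,\g^2]\subset\g^3$ and $[\c,\g^3]\subset\g^4=0$, so again $\k=\c\ltimes\n$ with $\k\cap\h=0$.
For $\alpha=\tfrac12$: $[\c,\g^{1/2}]\subset\g^0$ and $[\c,\g^{3/2}]\subset\g^1$, and $\g^1$ may meet $\h$ only through $\g^{-1}$-type elements, which is impossible here since $1\ne-1$. So $\g^1\cap\h=0$. The brackets may hit $\g^0=\R B\oplus\h^{00}$, so here I take $\k$ to be the subalgebra generated by $\m=\g^{-1/2}\oplus\g^{1/2}\oplus\g^{3/2}$.

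In every case $\k|_p=T_pM$. Once $\k_p\subset\so(T_pM)$ is established, the homogeneity argument in Proposition~\ref{Prop. isotropy}(1), applied to the local $\k$-action, gives a conformal metric for which $\k$ is Killing. This proves (3).

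\textbf{Main obstacle: $\alpha=\tfrac12$.} Here I must rule out $B\in\k$, i.e.\ show $[\g^{-1/2},\g^{1/2}]\subset\h^{00}$. I would mimic the proof of Lemma~\ref{Lem. existence of foliation}. First, $\g^{1/2}$ preserves the local foliation $\G$, because $\G$ is tangent to $\n|_q$ by definition. Next, consider the transversal action $\rho$ on a curve through $p$ transverse to $\G$, to bound $\dim\rho(\g^{1/2})$. Then use the Jacobi identity on triples involving $V\in\g^{3/2}$ and $U\in\g^{-1/2}$ to show that the $B$-components $\lambda_X$ vanish.

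An alternative is to compare eigenvalues. If $B\in\k$, then $\ad_B$ restricted to $\k$ has trace $\tfrac12 n+\tfrac32-\tfrac12+(\text{contributions from }\g^1\cap\k)$, which is positive. If $\k$ acted locally transitively with isotropy containing $B$, it would be essential with a homothetic-type element. I would try the Jacobi route first.

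\textbf{Final step: lightlike foliation and $\k$-invariance.} Given $\k_p\subset\so$, the Killing metric makes $\g^{\alpha+1}$ a parallel-type null direction. Since $\n$ is an ideal of $\k$, $\G$ is $\k$-invariant, and $\G^\perp=\g^{\alpha+1}|_q$ everywhere near $p$ by transport under $\k$.
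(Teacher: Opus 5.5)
Your treatment of $\alpha\in\{1,2\}$ is correct and is the paper's argument. For $\alpha=1$, your complement $\c$ of $\h\cap\g^0$ in $\g^0$ is the right reading of the paper's $\l$, which as literally written contains $B$.

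The gap is at $\alpha=\tfrac12$. The inclusion $[\g^{-1/2},\g^{1/2}]\subset\h^{00}$ is what gives $\k_p\subset\so(T_pM)$, and hence item (3). You leave it unproved, and neither route you suggest closes it.

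\begin{itemize}
\item The transversal representation $\rho$ of Lemma~\ref{Lem. existence of foliation} exists only because $\F^+$ is $\g$-invariant. Here $\h$ stabilises $V^{\alpha-1}$, so $\n|_p=(V^{\alpha+1})^\perp$ is not $\h$-invariant once $\g^{-1}\neq0$. Thus $\G$ is not $\g$-invariant and $\g$ does not act on a transversal.
\item Restricting to the algebra generated by $U$, $B$ and $\n$ is circular. $U$ preserves $\G$ only if the fields $[U,X]\in\g^0$ are tangent to $\G$, and that is essentially the statement to be proved.
\item The trace idea yields no contradiction. An isotropy element with $\mathrm{tr}\,\ad_B\neq 0$ on $\k$ just means $\k$ is non-unimodular and essential, which is the situation you start from.
\end{itemize}

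The missing observation is one you already used in your first step. For $\alpha=\tfrac12$ and $\h\subset\g^0\oplus\g^{-1}$ you have $\sigma_B=\{-1,0,-\tfrac12,\tfrac12,\tfrac32\}$, so $\g^1=\g^2=0$, not merely $\g^1\cap\h=0$ as you wrote in your second step. The argument then runs as follows.

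\begin{itemize}
\item Take $U\in\g^{-1/2}$, $X\in\g^{1/2}$, $V\in\g^{3/2}$. Then $[U,V]\in\g^1=0$ and $[V,X]\in\g^2=0$, so the Jacobi identity gives $[[X,U],V]=0$.
\item Write $[X,U]=\lambda_XB+\varphi_X$ with $\varphi_X\in\h^{00}\subset\so(n)$. The bracket $[\varphi_X,V]$ lies in $\g^{3/2}$, which meets $\h$ trivially.
\item The value of $[\varphi_X,V]$ at $p$ is, up to sign, $\varphi_X$ applied to $V^{\alpha+1}$, which is zero. Hence $[\varphi_X,V]=0$.
\item Therefore $\tfrac32\lambda_XV=0$ and $\lambda_X=0$.
\end{itemize}

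This is the paper's ``one shows using the Jacobi identity''; no transversal dynamics is needed.

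Separately, your final step uses that $\n$ is an ideal of $\k$. For $\alpha=\tfrac12$ this is not automatic, since $[\g^{-1/2},\g^{1/2}]$ lies in $\h^{00}$ rather than in $\n$; the paper also asserts it without comment. It can be justified in either of two ways.

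\begin{itemize}
\item The Jacobi identity on $(X,Y,U)$, with $[X,Y]\in\g^1=0$, gives $[\varphi_Y,X]=[\varphi_X,Y]$. So the form $g_p(\varphi_YX,Z)$ on $V^\alpha$ is symmetric in $(X,Y)$ and skew in $(X,Z)$, hence zero. Injectivity of $\rho_p$ then gives $\varphi_X=0$, so $\g^{-1/2}\oplus\g^{1/2}\oplus\g^{3/2}$ is abelian. Incidentally, as in Lemma~\ref{Lem. CF case}, this makes the subcase conformally flat.
\item Alternatively, $\n\oplus(\k\cap\g^0)$ is an ideal of $\k$, because $\h^{00}$ kills $\g^{-1/2}$. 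Its evaluation contains $\n|_q$ and has rank $n+1$ near $p$. Proposition~\ref{distributionprop} then gives the $\k$-invariance of $q\mapsto\n|_q$ that you need.
\end{itemize}
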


\begin{proof}
\noindent \textbf{Case} $\boldsymbol{\alpha = 2}$. Define 
$
\l := \g^{\alpha-1} \oplus \g^\alpha \oplus \g^{\alpha+1}
$.  This is a locally simply transitive subalgebra,  and $\n$ is an abelian ideal in $\l$.

\noindent \textbf{Case} $\boldsymbol{\alpha = 1}$. Here,  
$
\l := \g^{\alpha-1} \oplus \g^\alpha \oplus \g^{\alpha+1}
$  is a locally simply transitive subalgebra, and $\n$ is a $2$-step nilpotent ideal in $\l$, with center $\g^{\alpha+1}$. 

\noindent \textbf{Case} $\boldsymbol{\alpha = \frac{1}{2}}$. In this case, we have $[\g^\alpha, \g^{\alpha -1}] \subset \g^0$. One shows using the Jacobi identity that actually $[\g^\alpha, \g^{\alpha -1}] \subset  \h^{00}$. Thus, the algebra $\k$ generated by   $\g^{\alpha-1} \oplus \g^\alpha \oplus \g^{\alpha+1}$ is locally transitive and satisfies $\k_p \subset \so(T_p M)$. In addition, $\n$ is an abelian ideal in $\k$. 

Thus, in all cases, the lemma follows directly. 
\end{proof} 
 
\subsection{Proof of Proposition \ref{Prop. existence of foliation}} Item (1) follows from Lemma \ref{Lem. CF case},  Item (2) from Corollary \ref{Cor. ideal defining F} and Lemma \ref{Lem. (1) Heis subalgebra}, and Item (3) from Lemma \ref{Lem. (2) Heis subalgebra}.

\section{Proof of Theorem~\ref{maintheo}}

Assume that  $(M,g)$ is  locally conformally homogeneous and weakly essential, i.e. the pseudo-group 
of local conformal transformations acts transitively on $M$ and does not  preserve a Lorentzian metric in the conformal class of $g$. Then, the image $\H_p$ of the isotropy representation   at $p\in M$ contains a one-parameter subgroup $B^t$ as in Corollary \ref{essential-cor}.
The same holds for the 
 universal cover $(\widetilde{M},\widetilde{g})$, which has the same pseudo-group of local conformal transformations, and on which the Lie algebra $\g$ of global conformal  vector fields  acts transitively. 
 By Corollary \ref{para-cor},  either $(M,g)$ is conformally flat,  or the Lie algebra  $\h$ of $\H_{\widetilde{p}}$ at  $\tilde{p} \in \widetilde{M}$   
 stabilizes a null line in $T_{\tilde{p}}\widetilde{M}$. 
 By Proposition \ref{Prop. isotropy}, we may assume that the isotropy representation is injective. 
   Hence, we can  apply  
 Proposition \ref{Prop. existence of foliation}
to $\g$ and $\h$,   and we obtain that either the metric is conformally flat, or there exists locally a codimension-one lightlike foliation of  \textit{Heisenberg type}. In the latter case, Proposition \ref{pwkilling-prop} implies that the metric is locally conformal to a plane wave. Moreover, the existence of a locally transitive algebra of Killing vector fields, as given by Proposition \ref{Prop. existence of foliation}, ensures that this plane wave is isometrically locally homogeneous.\medskip

To  establish the last statement of Theorem \ref{maintheo}, we require the following result.

\begin{proposition}
Assume that $(M,[g])$ is not conformally flat. If $(M,[g])$ is locally conformally a plane wave, then there exists a homogeneous plane wave $X$ such that $(M,[g])$ is modeled on $(\Conf(X),X)$.     
\end{proposition}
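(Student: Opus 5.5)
The plan is to build the model space from the local data and then use analyticity and simple connectedness to get the $(G,X)$-structure. We work in the setting established above: $(M,[g])$ is locally conformally homogeneous and not conformally flat. By the first part of the proof, every point $p$ has a neighbourhood $U$ that is conformal to an open subset of some plane wave. Moreover, this plane wave can be chosen isometrically locally homogeneous, since by Proposition~\ref{Prop. existence of foliation} there is a metric in the conformal class with a locally transitive Killing algebra containing the Heisenberg algebra. By Section~\ref{pwbackground-sec}, its profile is $Q(t)=\e^{tF}S\e^{-tF}$ in the regular case or $Q(t)=t^{-2}\e^{\log(t)F}S\e^{-\log(t)F}$ in the singular case.

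First, we fix one such local model and extend it to a complete homogeneous plane wave $X$. In the regular case we take $X=\R^{n+2}$ with the metric~(\ref{pw}) for the same $Q$. In the singular case we take $X=\{t>0\}\times\R^{n+1}$, or we pass to the globally conformally equivalent regular model (\cite{Holland-Sparling}). Both are real analytic, simply connected, and homogeneous under $\Conf(X)$. The Holland–Sparling description gives $\mathfrak{conf}(X)=(\R\oplus\R\oplus\k)\ltimes\mathfrak{heis}$, and the group generated by these flows together with the isometries acts transitively on $X$.

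Second, we show that the model $X$ does not depend on the point. Local conformal homogeneity of $M$ means that any two points $p,q$ have conformally equivalent neighbourhoods. Hence the local models at $p$ and at $q$ are locally conformally equivalent to each other. By homogeneity of $X$, every open subset of $M$ therefore admits charts into the single space $X$. This gives an atlas $\{\phi_i:U_i\to X\}$ of conformal embeddings.

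Third, we check the transition maps. On a connected overlap, $\phi_j\circ\phi_i^{-1}$ is a conformal diffeomorphism between connected open subsets of $X$. We must show it is the restriction of an element of $\Conf(X)$; this is the main point. We use the rigidity of conformal structures in dimension $\ge3$: a non-conformally-flat analytic conformal structure is a rigid geometric structure in Gromov's sense. Since $X$ is simply connected and analytic, the analogue of Proposition~\ref{Prop. M 1-connected}(1) applied to $X$ extends every local conformal vector field of $X$ globally. The pseudo-group of local conformal maps of $X$ is then generated by the flows of $\mathfrak{conf}(X)$ together with the local isotropy. The local isotropy is algebraic and, by Proposition~\ref{Prop. isotropy}(3), faithfully represented by its derivative. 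We then extend a local conformal map $f$ of $X$ along paths, using analytic continuation of rigid maps (Amores–Nomizu type argument), to a global conformal map $\bar f\in\Conf(X)$. To do this one needs $X$ to be analytically complete for the conformal structure, so that continuation does not break down at finite distance.

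We expect this completeness to be the obstacle. In the regular model it should follow from the explicit Killing fields being complete and from homogeneity: every local conformal vector field is the restriction of a complete global one, and $\Conf(X)$ acts transitively with the same isotropy as the pseudo-group. Therefore any local conformal map $f$ with $f(x)=y$ can be written as $h\circ k$, where $h\in\Conf(X)$ and $k$ lies in the local isotropy at $x$. The local isotropy at $x$ is realised globally because its identity component is generated by flows of global complete fields. The finitely many remaining components are handled using the explicit diagonal homothety and the discrete symmetries of~(\ref{pw}). Once every transition map is shown to lie in $\Conf(X)$, the atlas is a $(\Conf(X),X)$-structure on $M$, which proves the proposition.
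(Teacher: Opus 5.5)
Your route differs from the paper's. The paper never writes down a model: it takes the simply connected $G$ with Lie algebra $\g=\k\ltimes\mathfrak{r}$ and notes that $H=K\ltimes H_1$ is closed, because $H_1$ is a connected subgroup of the simply connected solvable group $R$ (Malcev). It then invokes Mostow's criterion to obtain $X=G/H$. Your first two steps are sound and give $X$ more explicitly: you take the regular homogeneous plane wave on $\R^{n+2}$, replacing a singular profile by the globally conformally equivalent regular one, and transport a single chart by local homogeneity. But you must then show that every conformal map between connected open subsets of $X$ is the restriction of an element of $\Conf(X)$, and here there is a genuine gap. The splitting $f=h\circ k$ is fine. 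So is realising the identity component of the local isotropy by global flows, although completeness of every element of $\mathfrak{conf}(X)$, not only of the listed generators, needs Palais' integrability theorem. The sentence about the remaining components is not a proof. The homothety $2v\partial_v+\x^\top\partial_{\x}$ generates a one-parameter group fixing the points of $\{v=0,\ \x=0\}$, so it lies in the identity component and cannot reach any other component. You never determine the component group of the local isotropy, so nothing guarantees that unspecified discrete isometries of the plane wave metric exhaust it. The analytic-continuation argument you begin with is likewise left open.

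The missing idea is that simple connectedness makes the components irrelevant.
\begin{enumerate}
\item The generators of $\g=\mathfrak{conf}(X)$ are complete on $X=\R^{n+2}$, so $\g$ integrates to a global transitive action of its simply connected group $G$. Since $G$ is connected and $\pi_1(X)=0$, exactness of $\pi_1(X)\to\pi_0(G_x)\to\pi_0(G)$ shows that every stabilizer $G_x$ is connected.
\item A conformal germ $f$ with $f(x)=y$ pushes germs of conformal fields at $x$ to germs at $y$. These extend uniquely to $X$ (Ledger--Obata and analyticity), so $f_*$ defines $\Phi\in\mathrm{Aut}(\g)$ with $\Phi(\g_x)=\g_y$.
\item Integrating gives $\tilde\Phi\in\mathrm{Aut}(G)$ with $\tilde\Phi(G_x)=G_y$, since both are connected subgroups with the same Lie algebra. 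Hence $\bar f(g\cdot x):=\tilde\Phi(g)\cdot y$ is a well-defined diffeomorphism of $X$.
\item $\bar f$ agrees with $f$ near $x$, because $f$ intertwines the local flows of $Y$ and $\Phi(Y)$. It is conformal everywhere by the equivariance $\bar f\circ g=\tilde\Phi(g)\circ\bar f$. By unique continuation it equals $f$ on the whole connected domain.
\end{enumerate}
With this lemma your transition maps lie in $\Conf(X)$ and the argument closes.
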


In general, given a locally conformally homogeneous pseudo-Riemannian manifold $(M,[g])$, it is not necessarily true that there exists a homogeneous model to which $(M,[g])$ is locally conformal. Consider a point $p \in M$, and let $\mathfrak{g}$ be the Lie algebra of local conformal vector fields in a neighborhood of $p$, and $\mathfrak{h}$ the subalgebra of those vanishing at $p$.  
Let $G$ be the unique connected and simply connected Lie group with Lie algebra $\mathfrak{g}$, and let $H$ be the connected subgroup generated by $\mathfrak{h}$.  
Then, by \cite{Mostow}, the manifold $(M,[g])$ admits a homogeneous model if and only if $H$ is closed in $G$.  
In that case, $M$ is locally conformal to the homogeneous space $G/H$.

In our situation, when $(M,[g])$ is a non-conformally flat Lorentzian manifold locally conformal to a plane wave,  it follows from  \cite{Holland-Sparling} (see Section~\ref{pwbackground-sec}) that $\g$ is an $\R$-extension of the algebra of Killing fields $$\g_0 \cong (\R \oplus \k) \ltimes \mathfrak{heis},$$ where $\k$ is a subalgebra contained in the isotropy $\h$. More precisely, $$\g=(\R \oplus \R \oplus \k) \ltimes \mathfrak{heis}.$$

Let $G$ be the connected and simply connected Lie group with Lie algebra $\g$. We have 
$\g = \k \ltimes \mathfrak{r}, \quad$ with $\quad  \mathfrak{r} = (\R \oplus \R) \ltimes \mathfrak{heis}$
a solvable ideal. Then there is a  decomposition
$G \cong K \ltimes R$, 
where $K \leq G$ is the connected subgroup with Lie algebra $\k$ and $R \leq G$ is the connected subgroup with Lie algebra $ \mathfrak{r}$. Note that $R$ is solvable and simply connected.
Now, since $\k \subset \h$, we have 
$\h = \k \ltimes \h_1, \quad \h_1 := \h \cap  \mathfrak{r}$. 
Let $H \leq G$ denote the connected subgroup with Lie algebra $\h$. Then
$H = K \ltimes H_1 \subset K \ltimes R$, 
where $H_1 \leq R$ is the connected subgroup with Lie algebra $\h_1$.
By a classical fact, every connected Lie subgroup of a simply connected solvable Lie group is closed \cite[Corollary, p. 186]{Malcev}, \cite[Lemma 2]{H-C}. Hence $H_1$ is closed in $R$, and consequently $H$ is a closed subgroup of $G$.  
Therefore, the quotient space
$X = G / H$
is well-defined, and the manifold $M$ is locally modeled on $(G, X)$.

\section{On Penrose limits}
\label{Penrose-sec}

\subsection{Penrose limit: a conformal invariant} The aim of this  paragraph is to show that the Penrose limit associated to a Lorentzian metric is a conformal invariant. \medskip

Let $g$ be a Lorentzian metric, and $\gamma$ a null geodesic. To define the Penrose limit of $g$ along $\gamma$ (see \cite{Blau-PL}), we first write the metric in coordinates adapted to $\gamma$. There exist local coordinates $(u,v,x)$ such that:  

\begin{itemize}
    \item the curve $\gamma$ is given by $u \mapsto (u,0,0)$, and $u$ is an affine (geodesic) parameter of $\gamma$,
    \item the metric in these coordinates is written as
\begin{equation}\label{Eq-adapted coord} g = 2\d u\d v + a(u,v,x) \d v^2 + \sum_i 2 b_i(u,v,x) \d v dx_i + \sum_{i,j} c_{ij}(u,v,x) \d x_i \d x_j \end{equation}
\end{itemize}
In coordinates of the form (\ref{Eq-adapted coord}), the curves $u \mapsto (u,v_0,x_0)$ are null geodesics for any constants $v_0$ and $x_0$.  

Then, the Penrose limit of $g$ along $\gamma$ is defined as follows. For $\epsilon > 0$, set
$$
\Phi_\epsilon(u,v,x) = (u, \epsilon^2 v, \epsilon x), \quad g_\epsilon := \frac{1}{\epsilon^2} \Phi_\epsilon^* g.
$$
As $\epsilon \to 0$, the metric $g_\epsilon$ converges to a plane wave metric (in Rosen coordinates)
\begin{equation}\label{Eq-PL_g} \mathrm{PL}_g:= 2 \d u \d v  + \sum_{i,j} \overline{c}_{ij}(u)\d x_i\d x_j, \end{equation}
where
$$
\overline{c}_{ij}(u) := c_{ij}(u,0,0).
$$
We will now show that the Penrose limit is conformally invariant in the following sense.

\begin{proposition}\label{Penrose-prop}
Let $g$ be a Lorentzian metric, and let $\gamma$ be a null geodesic. Let $g_\sigma := e^\sigma g$ be a conformal change of $g$ (up to reparametrization, $\gamma$ is also a null geodesic of $g_\sigma$). Let $\mathrm{PL}_g$ (resp.\ $\mathrm{PL}_{g_\sigma}$) denote the Penrose limit of $g$ (resp.\ of $g_\sigma$) along $\gamma$. Then there exists a conformal diffeomorphism
$ \mathrm{PL}_g \to \mathrm{PL}_{g_\sigma}.$
\end{proposition}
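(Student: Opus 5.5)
The plan is to compute the Penrose limit of $g_\sigma$ directly in coordinates built from the adapted coordinates $(u,v,x)$ of $g$, and compare it with $\mathrm{PL}_g$ from (\ref{Eq-PL_g}). The key point is that $g_\sigma = e^{\sigma} g$ has the same null geodesics up to reparametrization. So the family of null geodesics $u\mapsto (u,v_0,x_0)$, which rules the coordinate system, is still a family of null geodesics for $g_\sigma$. Only the affine parameter changes.

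First step: construct adapted coordinates for $g_\sigma$. For each fixed $(v,x)$, let $\tilde u = f(u,v,x)$ be the $g_\sigma$-affine parameter along $u\mapsto(u,v,x)$. The standard reparametrization formula for conformal changes gives $\partial_u f = e^{\sigma}$, after normalizing the integration constants, e.g.\ $f(0,v,x)=0$. We then have $\partial_u \sigma$-type corrections only through $\sigma$ restricted to the geodesic. In the coordinates $(\tilde u, v, x)$ the metric $g_\sigma$ is not yet of the form (\ref{Eq-adapted coord}), because of the cross terms $\d\tilde u\,\d x_i$ and a coefficient of $\d\tilde u\,\d v$ different from $1$. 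So I would correct the coordinates: set $\tilde v = v + h(\tilde u,v,x)$ and $\tilde x = x + k(\tilde u,v,x)$, chosen via the standard construction. That construction takes $\tilde v$ to be the $g_\sigma$-dual function with $\partial_{\tilde u}$ null geodesic and $g_\sigma(\partial_{\tilde u},\cdot)=\d\tilde v$. The only information I will need is that along $\gamma$ (i.e.\ at $v=0,x=0$), at zeroth order in $(v,x)$, these corrections are controlled.

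Second step: take the scaling limit. With $\Phi_\epsilon(\tilde u,\tilde v,\tilde x)=(\tilde u,\epsilon^2\tilde v,\epsilon\tilde x)$ and $\epsilon\to 0$, only the coefficients of $g_\sigma$ evaluated on $\gamma$ survive, and only in the $\d\tilde x\,\d\tilde x$ and $\d\tilde u\,\d\tilde v$ slots. Hence I expect
\[
\mathrm{PL}_{g_\sigma}=2\,\d\tilde u\,\d\tilde v+\sum_{i,j} e^{\bar\sigma(u)}\,\overline{c}_{ij}(u)\,\d\tilde x_i\,\d\tilde x_j,
\]
where $\bar\sigma(u)=\sigma(\gamma(u))$ and $\d\tilde u = e^{\bar\sigma(u)}\d u$ along $\gamma$. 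More carefully, I would argue at the level of the limit: the linear parts of the coordinate changes along $\gamma$ survive the rescaling, while the higher-order parts die. So the limit coordinate change is $\tilde u=f(u)$, $\tilde v = \lambda(u) v + \mu(u)\cdot x\,(\ldots)$, $\tilde x = x$ up to terms that are quadratic in $x$ with the right weight.

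Third step: exhibit the conformal map. Writing $\mathrm{PL}_g = 2\,\d u\,\d v + \bar c(u)$ and using $\d\tilde u = e^{\bar\sigma}\d u$, the map $(u,v,x)\mapsto(\tilde u,\tilde v,\tilde x)=(f(u),v,x)$ pulls $\mathrm{PL}_{g_\sigma}$ back to
\[
e^{\bar\sigma(u)}\bigl(2\,\d u\,\d v + \bar c_{ij}(u)\,\d x_i\,\d x_j\bigr)=e^{\bar\sigma}\,\mathrm{PL}_g .
\]
This is a conformal diffeomorphism; any extra $\tfrac{1}{2}\bar\sigma'(u)\,|x|^2$-type terms arising from the $\tilde v$-correction can be absorbed by a further plane-wave coordinate change $v\mapsto v+\tfrac12\phi(u)\,x^\top\bar c(u)\, x$.

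The main obstacle is step two: justifying that the correction terms in $\tilde v,\tilde x$ contribute in the limit only through their linear and quadratic parts along $\gamma$, and that these give a diffeomorphism (together with a conformal factor depending only on $u$). I would handle it by Taylor expanding $\sigma$ around $\gamma$ and checking weights under $\Phi_\epsilon$. Alternatively, a cleaner route is the intrinsic characterization: the Penrose limit is determined by the Jacobi/transverse curvature data $R(\cdot,\dot\gamma)\dot\gamma$ on $\dot\gamma^\perp/\dot\gamma$. Under $g\mapsto e^\sigma g$ with affine reparametrization $\tilde u=f(u)$, this data transforms to its trace-free part plus a multiple of the identity, through the Weyl tensor plus Ricci terms. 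A $u$-dependent scalar shift of $Q(u)$ combined with a reparametrization of $u$ is exactly the freedom realized by conformal maps between plane waves in Brinkmann form. I would fall back on this Brinkmann-form argument if the coordinate bookkeeping gets unwieldy.
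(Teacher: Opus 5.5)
Your skeleton is the paper's: the same reparametrization $\tilde u=f(u,v,x)$ with $\partial_u f=e^{\sigma}$, and the same final map $(u,v,x)\mapsto(f(u,0,0),v,x)$, which pulls $\mathrm{PL}_{g_\sigma}$ back to $e^{\bar\sigma(u)}\mathrm{PL}_g$. The gap is in Steps 1--2. You claim that in the coordinates $(\tilde u,v,x)$ the metric $g_\sigma$ has $\d\tilde u\,\d x_i$ cross terms and a $\d\tilde u\,\d v$ coefficient different from $1$. You then introduce unspecified corrections $\tilde v=v+h$, $\tilde x=x+k$, and you only say you ``expect'' the resulting limit. So the step you call the main obstacle is left unproved. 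Its premise is also false, which makes the corrections unnecessary. Note too that the proposed absorption by $v\mapsto v+\tfrac12\phi(u)x^\top\bar c(u)x$ would itself create $\d u\,\d x_i$ terms in Rosen form.

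The missing observation is a one-line computation. In (\ref{Eq-adapted coord}) the differential $\d u$ occurs only in the term $2\,\d u\,\d v$. Let $G(\tilde u,v,x)=(h(\tilde u,v,x),v,x)$ be the inverse of $(u,v,x)\mapsto(f,v,x)$, so that $(e^{\sigma}\circ G)\,\partial_{\tilde u}h=1$. Substituting gives
\[
G^*g_\sigma=2\,\d\tilde u\,\d v+A\,\d v^2+2B_i\,\d v\,\d x_i+C_{ij}\,\d x_i\,\d x_j,\qquad C_{ij}=(e^{\sigma}c_{ij})\circ G,
\]
with $A=(e^{\sigma}\circ G)(a\circ G+2\partial_v h)$ and $B_i=(e^{\sigma}\circ G)(b_i\circ G+\partial_{x_i}h)$.

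So $(\tilde u,v,x)$ are already adapted coordinates for $g_\sigma$. Indeed $g_\sigma(\partial_{\tilde u},\cdot)=\d v$ and $\partial_{\tilde u}$ is null, so the $\tilde u$-lines are affinely parametrized null geodesics of $g_\sigma$. Your ``standard construction'' would therefore return $\tilde v=v$ and $\tilde x=x$. The definition of the Penrose limit then gives directly $\mathrm{PL}_{g_\sigma}=2\,\d\tilde u\,\d v+\overline C_{ij}(\tilde u)\,\d x_i\,\d x_j$, with $\overline C_{ij}(\tilde u)=C_{ij}(\tilde u,0,0)$. No Taylor expansion or weight counting is needed. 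With this in place, your Step 3 is exactly the paper's argument.

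Your fallback via the tidal tensor $R(\cdot,\dot\gamma)\dot\gamma$ could be made to work. As stated, however, it omits the essential check: that the scalar shift produced by $g\mapsto e^{\sigma}g$ coincides with the shift realized by the Brinkmann-form conformal map with $\tilde u=f(u)$.
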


\begin{proof}
Let $g_\sigma = e^{\sigma} g$, and set $K := e^{\sigma}$. To compute the Penrose limit of $g_\sigma$ along $\gamma$, we first write the metric in coordinates adapted to $\gamma$. For this, consider the map
$$ F : (u,v,x) \mapsto (f(u,v,x), v, x),$$ 
where $f$ is a smooth function satisfying
\begin{equation}\label{Eq}
\frac{\partial f}{\partial u} = K.
\end{equation}
Then $F$ is a diffeomorphism. Its inverse
$$G : (u,v,x) \mapsto (h(u,v,x), v, x)$$
satisfies
$$f(h(u,v,x), v, x) = u,
\qquad
(K \circ G)\, \frac{\partial h}{\partial u} = 1.$$
Then, the pullback metric $G^{*} g_\sigma$ takes the form
$$G^* g_\sigma
= 2\, du\, dv
+ A(u,v,x)\, dv^{2}
+ \sum_i 2 B_i(u,v,x)\, dv\, dx_i
+ \sum_{i,j} C_{ij}(u,v,x)\, dx_i\, dx_j,$$
where $A$, $B_i$, and $C_{ij}$ are smooth functions, and
$$C_{ij} := (K \circ G)\, (c_{ij} \circ G).$$
The Penrose limit of $g_\sigma$ along $\gamma$ is then given by
$$\mathrm{PL}_{g_\sigma}=
2\, du\, dv
+ \overline{C}_{ij}(u)\, dx_i\, dx_j,$$
where
$$\overline{C}_{ij}(u)=C_{ij}(u,0,0)=(K \circ G(u,0,0))\, (c_{ij} \circ G(u,0,0)).$$
Now, consider the diffeomorphism 
$$\phi: (u,v,x) \mapsto (f(u,0,0),v,x).$$
Using (\ref{Eq}), we compute 
$$
  \phi^* (\mathrm{PL}_{g_\sigma}) = K(u,0,0) (2\d u\d v + c_{ij}(u,0,0) \d x_i \d x_j)
 = K(u,0,0) \mathrm{PL}_g.  
$$
Hence, $\phi: \mathrm{PL}_{g} \to \mathrm{PL}_{g_\sigma}$ is a conformal diffeomorphism. 
\end{proof}

\begin{remark}
It appears from the proof above that the Penrose limit of $e^\sigma g$  along a null geodesic $\gamma$ of $g$ is given by $e^{\overline{\sigma}}\; \mathrm{PL}_g$, where $\overline{\sigma}(u):= \sigma(u,0,0)$ is the restriction of $\sigma$ to $\gamma$, and $\mathrm{PL}_g$ is the Penrose limit of $g$ along $\gamma$.    
\end{remark}

\subsection{Penrose limits of a plane wave}
Given a plane wave $(M,g)$ of dimension $n+2$, with parallel null vector field $\xi$, the Penrose limit taken along a null geodesic tangent to $\xi$ is the flat Minkowski space. On the other hand, if the plane wave is written in Rosen coordinates (\ref{Eq-PL_g}), the Penrose limit taken along the null geodesic $(u,0,0)$, which is transversal to $\xi$, coincides with the original plane wave metric.

Let $p \in M$, and let $C_p \subset T_p M$ denote the null cone at $p$. Then the isotropy group $H_p$ fixes  the null vector $\xi_p$, and acts transitively on $C_p \smallsetminus \R \xi_p$.
Consequently, up to isometry, there are exactly two (germs of) Penrose limits associated to $(M,p)$:
\begin{itemize}
    \item The Penrose limit taken along a null geodesic tangent to $\xi_p$. In this case, the resulting limit is flat Minkowski space.
    \item The Penrose limit taken along a null geodesic transversal to $\xi_p$. In this case, the resulting limit is
    the plane wave metric in some neighborhood of $p$ (the neighborhood may depend on the chosen geodesic). Thus, it is
    a plane wave embedded in the original plane wave. 
\end{itemize}
If the plane wave is locally homogeneous, the Penrose limit does not depend on the base point $p$. Hence, up to isometry, there are exactly two (germs of) Penrose limits associated to $M$. 

\begin{corollary}\label{Penrose-cor}
Let $(M, g)$ be a non-conformally flat Lorentzian manifold of dimension at least $3$ which is locally conformally homogeneous. Assume that $(M,g)$ is weakly essential. Then, $(M,g)$ 
 is locally conformally equivalent to its Penrose limit along some null geodesic.  
\end{corollary}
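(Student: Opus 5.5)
By Theorem~\ref{maintheo}, since $(M,g)$ is weakly essential and not conformally flat, case (3) holds. So there is a homogeneous plane wave $(X,g_X)$ and, around every point $p\in M$, a conformal diffeomorphism $\psi$ from a neighborhood $U$ of $p$ onto an open subset of $X$. The plan is to transport the Penrose limit through $\psi$ and through the conformal factor. Two facts are needed for this: the conformal invariance of Penrose limits (Proposition~\ref{Penrose-prop}) and the computation of Penrose limits of a plane wave given just before the statement.

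First, $\psi^*g_X = e^{\sigma} g$ on $U$ for some function $\sigma$. Conformally related metrics have the same unparametrized null geodesics, so a null geodesic $\gamma$ of $g$ through $p$ corresponds to a null geodesic $\psi\circ\gamma$ of $g_X$. The Penrose limit is defined through coordinates adapted to $\gamma$ together with the rescaling $\Phi_\epsilon$, and this construction is natural under isometries. Hence $\mathrm{PL}_{\psi^*g_X}$ along $\gamma$ is isometric to $\mathrm{PL}_{g_X}$ along $\psi\circ\gamma$. By Proposition~\ref{Penrose-prop}, applied to $g$ and $e^\sigma g$ along $\gamma$, there is a conformal diffeomorphism $\mathrm{PL}_g\to\mathrm{PL}_{e^\sigma g}$. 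Composing the two gives
\[
\mathrm{PL}_g(\gamma)\ \text{conformal to}\ \mathrm{PL}_{g_X}(\psi\circ\gamma).
\]

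Second, choose $\gamma$ so that $\psi\circ\gamma$ is transversal to the parallel null field $\xi$ of $X$. This is possible because $\xi_{\psi(p)}$ spans only one null direction in the null cone. By the discussion of Penrose limits of a plane wave, written in Rosen coordinates (\ref{Eq-PL_g}) along this geodesic, the Penrose limit of $g_X$ along $\psi\circ\gamma$ is the plane wave metric $g_X$ itself restricted to a neighborhood of $\psi(p)$. Since $X$ is homogeneous, this germ does not depend on the base point. Therefore $\mathrm{PL}_g(\gamma)$ is conformal to a neighborhood in $X$, and $X$ is in turn locally conformal to $(M,g)$ via $\psi$. This proves the claim.

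The main obstacle is the statement that the Penrose limit along a transversal null geodesic reproduces the original plane wave. More precisely, one has to check that Rosen coordinates adapted to an arbitrary transversal null geodesic exist on a neighborhood that is large enough to be meaningful. Rosen coordinates can degenerate at conjugate points, so the identification $\mathrm{PL}_{g_X}=g_X$ should only be claimed on a neighborhood of a segment of the geodesic. I would handle this by using the Brinkmann form (\ref{pw}) and taking $\gamma(u)=(t=u,v=0,\x=0)$. Then I would pass to Rosen coordinates along $\gamma$ through the standard Jacobi-field construction. Finally, I would observe that the scaling $(u,\epsilon^2v,\epsilon x)$ with the factor $\epsilon^{-2}$ leaves $2\,\d u\,\d v+\bar c_{ij}(u)\,\d x_i\,\d x_j$ invariant. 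The metric is exactly quadratic in $\x$ in Brinkmann form, so it becomes exactly $x$-independent in Rosen form, which means the limit is attained without any error terms.
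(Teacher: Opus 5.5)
Your proposal is correct and follows the paper's route. The paper derives this corollary by combining case (3) of Theorem~\ref{maintheo}, the conformal invariance of Penrose limits (Proposition~\ref{Penrose-prop}), and the observation that the Penrose limit of a plane wave along a null geodesic transversal to $\xi$, computed in Rosen coordinates, returns the plane wave itself. Your explicit choice of the Brinkmann geodesic $(t,0,0)$ and your passage to Rosen coordinates only fill in a detail that the paper covers by the transitivity of the isotropy on $C_p\smallsetminus\R\xi_p$.
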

Note that the proof of Theorem~\ref{maintheo} reveals more details than those stated in this corollary. The assumption implies that there is a null line bundle that is invariant under the local conformal vector fields, which is spanned by the parallel vector field of the plane wave, and that $(M,g)$ is locally conformally equivalent to the Penrose limit along all null geodesics that are  \textit{transversal} to this null line bundle.

\appendix

 \section{Account on   rigid transformations groups}  \label{Gromov's Theory}
 \addtocontents{toc}{\setcounter{tocdepth}{1}}

For references, see the foundational work of Gromov~\cite{Gro}, the lecture notes by D'Ambra and Gromov~\cite{DG}, and those by Ballmann~\cite{Bal}.

\subsection{Gromov's theory of rigid transformation groups}

In Gromov's theory of rigid transformation groups, one encounters the following fundamental construction.  
Let $\sigma$ be a rigid geometric structure on a manifold $M$. For any point $p \in M$, denote by $\Iso^{\mathrm{Loc}}_p$ the group of germs of local diffeomorphisms fixing $p$ and preserving $\sigma$, and by $\Iso^k_p$ the infinitesimal isometry group of order $k$ at $p$.

Since everything is local, we may simplify notation by assuming $M = \R^n$ and $p = 0$.  
Let $\mathcal{D}^k_n$ denote the group of $k$-jets of germs of diffeomorphisms at~$0$.  
Two diffeomorphisms are identified if their Taylor expansions at $0$ agree up to order~$k$ (in any coordinate system).  
Thus, elements of $\mathcal{D}^k_n$ can be viewed as polynomial maps $\R^n \to \R^n$ of degree $\leq k$ vanishing at $0$, with composition given by polynomial composition truncated at order $k$.  
For $k=1$, $\mathcal{D}^1_n = \GL(n, \R)$.

Let $\mathcal{U}^k \subset \mathcal{D}^k_n$ denote the subgroup consisting of elements whose first derivative at $0$ is the identity. This subgroup is unipotent, and one has the semidirect product decomposition
$\mathcal{D}^k_n = \GL(n,\R) \ltimes \mathcal{U}^k$. 
Let $E$ be the vector space of polynomial maps $\R^n \to \R^n$ of degree $\leq k$ that vanish at $0$. The group $\mathcal{D}^k_n$ acts naturally on $E$ on the right by composition:
$$\mathcal{D}^k_n \times E \to E, \quad (f,P) \mapsto P \circ f.$$
This action defines a faithful representation of $\mathcal{D}^k_n$ on $E$, given by
$$f \mapsto M_f^\top \in \End(E), \quad M_f(P) = P \circ f.$$
The map $f \mapsto M_f^\top$ is a group homomorphism, since
$M_{f_1 \circ f_2} = M_{f_2} \circ M_{f_1}$. 
Up to conjugation, this representation sends the unipotent subgroup $\mathcal{U}^k$ into the group of upper triangular unipotent matrices; in particular, its image lies in $\GL(E)$.
Writing an element $f \in \mathcal{D}^k_n$ as $f = (A_f, f^u)$ with respect to the decomposition
$\mathcal{D}^k_n = \GL(n,\R) \ltimes \mathcal{U}^k$,
we may factor $f$ as
$f = f_1 \circ f_2, \quad f_1 := (\1, f^u), \quad f_2 := (A_f,1)$. 
Since $A_f \in \GL(n,\R)$, the map $f_2$ is invertible, and hence so is $M_{f_2}$. It follows that the image of $\mathcal{D}^k_n$ under the above representation is contained in $\GL(E)$, hence 
$\mathcal{D}^k_n$ embeds as a subgroup of  $\GL(E)$. 
Moreover, this subgroup is determined by polynomial equations in the matrix entries of $\GL(E)$, and is therefore an algebraic subgroup. Finally, the algebraic structure of $\mathcal{D}^k_n$ is naturally defined via this embedding.

\subsection*{Infinitesimal isometries}
We can similarly define $k$-jets of the geometric structure $\sigma$.  
A local diffeomorphism $\phi$ near $0$ is called a $(k+1)$-infinitesimal isometry if the pullback $\phi^*\sigma$ and $\sigma$ have the same $k$-jet at~$0$.  
To make this explicit, suppose $\sigma$ is a pseudo-Riemannian metric.  
Then $\sigma$ can be written as
$x \in \R^n \mapsto (\sigma_{ij}(x)) \in \R^{n(n+1)/2}$.
The condition that $\phi$ is a $(k+1)$-infinitesimal isometry means that
$\phi^*\sigma - \sigma$
vanishes up to order~$k$, i.e. their Taylor expansions at $0$ agree to that order.
This condition depends only on the $(k+1)$-jet of $\phi$, and hence defines a subgroup
$$\Iso^{k+1}_0 \subset \mathcal{D}^{k+1}_n,$$
as the polynomial maps $\phi_{k+1}:\R^n \to \R^n$ of degree $\leq k+1$ vanishing at $0$, and such that  $\phi_{k+1}^* \sigma - \sigma$ vanishes up to order $k$. 
Also, observe that only the $k$-jet of $\sigma$ is involved here.   
It is then clear that the condition $\phi^*_{k+1}\sigma = \sigma$ up to order~$k$ is algebraic, since it is the zero set of a finite number of polynomials on $\mathcal{D}^{k+1}_n$. 
Thus, $\Iso^{k+1}_0$ is an algebraic subgroup of $\mathcal{D}^{k+1}_n$.

In the case of a conformal structure determined by a metric $g$, a $(k+1)$-infinitesimal isometry is defined by requiring that $g$ and $\phi^* g$ be conformal up to order~$k$, i.e. their $k$-Taylor developments are proportional (as vectorial polynomial maps).  
Equivalently, for all indices $i,j,l,m$,
$(\phi^*g)_{ij} g_{lm} - (\phi^*g)_{lm} g_{ij}$ 
vanishes up to order~$k$.

\subsection*{Gromov's Theorem}
Returning to the manifold~$M$, there are natural morphisms
\begin{equation}\label{Eq: Gromov}
  \Iso^{\mathrm{Loc}}_p \to \Iso^k_p.   
\end{equation}
Gromov's theory asserts that there exists an integer~$k$,  depending on the order of the geometric structure  and on $\dim M$, such that, for $p$ generic in  $M$, these morphisms are bijective  
(see \cite{Gro},  paragraph  0.3.A, p. 68, and Theorem 1.6.F, p. 86).  
In other words, for sufficiently large~$k$ and generic~$p$, every $k$-infinitesimal isometry is the $k$-jet of a genuine local isometry.

For such $p$ and $k$, the derivative representation 
$$ 
 \rho: \Iso^{\mathrm{Loc}}_p \to \GL(T_pM)
$$ 
coincides with the restriction to the algebraic subgroup $\Iso_p^k$ of $\mathcal{D}_n^k$ of the natural projection $\mathcal{D}_n^k = \GL(n,\R) \ltimes \mathcal{U}^k \to \GL(n,\R)$. It is therefore an algebraic homomorphism.  Let $p \in M$ and $k$ be such that (\ref{Eq: Gromov}) is an isomorphism. Then, by Lemma \ref{Lem: algebraic image}, the image of the derivative representation  contains the identity  component (for the Hausdorff topology) of its Zariski closure in  $\GL(n,\R)$. In the locally homogeneous case, this holds at every point.

Furthermore, all these local or infinitesimal groups carry natural Lie group structures, and bijective morphisms between them are Lie group isomorphisms. Consequently, both $\Iso^{\mathrm{Loc}}_p$ and its image under the derivative representation have finitely many connected components.

\subsection{Singer's Theorem}
\label{Singer}

Singer's Theorem concerns Riemannian metrics and states that infinitesimal homogeneity implies local homogeneity.

Let $(N, h)$ be a Riemannian manifold with curvature tensor~$R$.  
We say that $(N, h)$ is $k$-infinitesimally homogeneous if $R$ and its covariant derivatives $\nabla R, \ldots, \nabla^k R$ are the same at all points.  
That is, for any $p, q \in N$, there exists a linear isomorphism
$A_{pq} : T_p N \to T_q N$
sending $R_p, \nabla R_p, \ldots, \nabla^k R_p$ to $R_q, \nabla R_q, \ldots, \nabla^k R_q$.
Singer's Theorem asserts that there exists an integer $k = k(\dim N)$ such that if $(N, h)$ is $k$-infinitesimally homogeneous, then $(N, h)$ is locally homogeneous.  
Moreover, such an $A_{pq}$ is the derivative of a local isometry sending $p$ to $q$.  
In particular, the image of the local isotropy group under the derivative representation,
$$\Iso^{\mathrm{Loc}}_p \to \GL(T_pN),$$ 
is exactly the subgroup of $\GL(T_pN)$ preserving the tensors
$$ R_p, \nabla R_p, \ldots, \nabla^k R_p,$$
and is thus an algebraic subgroup.

One section of Ballmann's lecture notes \cite{Bal} provides an adaptation of Singer's Theorem to general rigid geometric structures following Gromov's theory.  
In the locally homogeneous case, it shows in a  clear way  that  the local isotropy is algebraic.

\subsection{Cartan connections approach} Pecastaing's article \cite{Pecastaing} yields a simplified approach of Gromov's theory for geometric structures given by a Cartan connection. This 
 covers in particular the conformal pseudo-Riemannian structures.  In particular, it contains an analogue of Singer's local homogeneity theorem for Cartan geometries (\cite[Theorem~1.1]{Pecastaing}).

 Briefly, a Cartan geometry modeled on $G/P$ consists of a $P$-principal bundle $\hat{M} \to M$ equipped with a $1$-form, called the Cartan connection 
$$
  \omega : T{\hat{M}} \to \mathfrak{g},
$$
with values  in the Lie algebra $\g$ of $G$. This Cartan connection induces a 
 parallelism  as follows: given a basis $(X_1,\dots,X_r)$ of $\g$, then $(\omega^{-1}(X_1), \dots, \omega^{-1}(X_r))$ is a parallelism of $T \hat{M}$, identifying $T \hat{M}$ with $\hat{M} \times \g$. 
 This parallelism is $P$-equivariant and coincides with left invariant parallelism of $P$ along the vertical.  
 
The curvature of $\omega$,
$$ 
  K = d\omega - [\omega, \omega],
$$
is the obstruction of parallel vector fields on $\hat{M}$ to form a Lie algebra isomorphic to~$\mathfrak{g}$.  
Via the parallelism, $K$ can be seen as a $P$-equivariant vectorial map
$$
  \kappa : \hat{M} \to  \mathsf{Hom}(\Lambda^2 \mathfrak{g}, \mathfrak{g}),
$$
called the curvature map. 
  It turns out that $\kappa$  takes values in a small $P$-invariant subspace $\mathcal{W}^0 \subset \mathsf{Hom}(\Lambda^2 \mathfrak{g}, \mathfrak{g})$. We can now differentiate $\kappa$, getting a map $D \kappa : T
\hat{M} \to \mathcal{W}^0$. 
Similarly, the parallelism allows one to express  $D \kappa$ as a vectorial map 
 $D \kappa: \hat{M} \to \mathcal{W}^1$, with $\mathcal{W}^1 := \mathsf{Hom}(\g,\mathcal{W}^0)$. 
  One can do the same
for iterated derivatives and define vectorial maps for $i \geq 0$
$$ 
  \kappa^i : \hat{M} \to \mathcal{W}^i, \qquad \kappa^0 := \kappa, \qquad \kappa^i := D \kappa^{i-1}, \quad \mathcal{W}^i := \mathsf{Hom}(\g, \mathcal{W}^{i-1}).
$$ 
These $\mathcal W^i$ are  endowed with a natural linear $P$-action, and $\kappa^i$
is $P$-equivariant with respect to this action.

A point in $\hat{M}$ lying over a point $x \in M$ is denoted by $\hat{x}$.  By $P$-equivariance, $\kappa^i$ sends a $P$-fiber  of $\hat{M} \to M$ above $x \in M$  to a $P$-orbit $C^i_x = \kappa^i (P \cdot \hat{x}) \subset \mathcal W^i$.  Then, $k$-infinitesimal  homogeneity means that 
 $C_x^i$ does not depend on $x \in M$ for all $i \leq k$, i.e. $ \kappa^i(P \cdot \hat{x}) = C_x^i = C^i$, for all $x \in M$.  

To put all these curvatures together, for any $l$, consider $$\kappa_l = (\kappa^0, \kappa^1,\ldots, \kappa^l) : \hat{M} \to \mathcal{Y}^l := \mathcal{W}^0 \times \mathcal{W}^1 \times \ldots \times  \mathcal{W}^l.$$

\subsection*{The pseudogroup action and local Killing fields} 
Henceforth, $\hat{M}$ is the Cartan bundle associated to a conformal pseudo-Riemannian manifold $M$. 
Let $\mathbf{P}$ denote the pseudogroup of local conformal transformations of $M$.  
It acts naturally on $\hat{M}$ (in fact freely and properly) by preserving the $P$-fibration. Moreover, this action preserves all the maps $\kappa_l$; that is, $\mathbf{P}$-orbits in $\hat{M}$ are contained in $\kappa_l$-levels. 
Local $\omega$-Killing  fields of the Cartan bundle $\hat{M}$ are also defined. A local $\omega$-Killing field  is a local vector field of $\hat{M}$ preserving the Cartan connection  $\omega$. An $\omega$-$\mathrm{Kill}^{loc}$-orbit of $\hat{x} \in \hat{M}$ is the set of points of $\hat{M}$ that can be reached from $\hat{x}$ by a finite sequence of flows of local $\omega$-Killing fields. 

There is a one-to-one correspondence between local conformal vector fields of $M$ and $\omega$-Killing fields of $\hat{M}$.  The local flow of an $\omega$-Killing field $\hat{X}$ preserves $\kappa_l$, which implies that $\hat{X}$ belongs to $\ker (D_{\hat{x}} \kappa_l)$ at every point $\hat{x}$ where $\hat{X}$ is defined, and for every $l \geq 0$. If $M$ is  locally conformally homogeneous, i.e. if ${\bf P}$ acts transitively on $M$, then it is $l$-infinitesimally homogeneous for any order $l$, and the map $\kappa_l$ has constant rank. Its level sets are therefore smooth submanifolds of $\hat{M}$   that project surjectively on $M$. 
In particular,  the $P$-invariant vector subspaces $\ker (D_{\hat{x}} \kappa_l) \subset T_{\hat{x}} \hat{M}$ project surjectively onto $T_x M$.
 
The point  proved by V. Pecastaing \cite{Pecastaing} is that, on a  $P$-invariant dense open subset $\hat{\Omega} \subset \hat{M}$, and  for $m$ big enough (depending on the dimension), $\kappa_l$-levels, for $l \geq m$,  coincide with the 
 $\bf P$-orbits in $\hat{\Omega}$.   

It is also shown in \cite{Pecastaing} that the evaluation of $\omega$-Killing fields generate all the tangent  space of the $\bf P$-orbits in $\hat{\Omega}$. 
Indeed, an integrability theorem shown in \cite{Pecastaing} (see also \cite[Theorem 2.2 and Annex A]{F-AnnexA}) yields the following:  for $m$ big enough, any vector $\hat{v} \in \ker(D_{\hat{x}} \kappa_l)$, $\hat{x} \in \hat{\Omega}$,  
is the evaluation of an actual local $\omega$-Killing field around $\hat{x}$. It follows that the $\omega$-$\mathrm{Kill}^{loc}$-orbits of $\hat{M}$  are exactly the connected components of the ${\bf P}$-orbits (since they have the same tangent spaces). 
When $M$ is locally conformally homogeneous, this set $\hat{\Omega}$ is the whole $\hat{M}$.

Thus, if $M$ is locally conformally homogeneous, for any vector $v \in T_x M$, and a lift $\hat{v} \in \ker(D_{\hat{x}} \kappa_l) \subset T_{\hat{x}} \hat{M}$, there exists an $\omega$-Killing field $\hat{V}$ such that $V(\hat{x})=\hat{v}$. This $\hat{V}$  projects to a Killing field $V$ on $M$ satisfying $V(x)=v$.
It follows that, when ${\bf P}$ acts transitively on $M$, the pseudo-algebra of local conformal vector fields also acts transitively on $M$; that is, for any $p \in M$, the evaluation map for local conformal vector fields is surjective on $T_p M$.

\subsection*{Local isotropy via the pseudogroup action} Since the induced action of $\bf P$ on $\hat{M}$ preserves the $P$-fibration,   if $f \in \bf P$  sends a point in a $P$-fiber  to another point in the same $P$-fiber, then $f$ preserves
 this 
$P$-fiber. 
Similarly,  if a local Killing 
 vector field is somewhere tangent to a $P$-fiber,   then it is everywhere  tangent to it.  
 
Since $\bf P$ acts freely on $\hat{M}$, one can identify the local isotropy group $\Iso^{Loc}_x$  of a point $x \in M$ with $O  \cdot \hat{x}$, 
 for any  $\hat{x} \in \hat{M}$ above $x$,   where 
 \[
  O \cdot \hat{x} = (\mathbf{P} \cdot \hat{x}) \cap (P \cdot \hat{x}),
\]
 i.e. the intersection of  the $\bf P$-orbit of $\hat{x}$
 with its $P$-fiber.  But this is also the intersection of $P \cdot \hat{x}$ with the $\kappa_m$-level of $\hat{x}$.  This can be described as follows. Consider 
 $$\kappa_m^x: P \cdot \hat{x} \to \mathcal{Y}^m,$$ the restriction of $\kappa_m$ to the $P$-fiber above $x$. This is a $P$-equivariant map whose  image is the $P$-orbit $C^m = P \cdot \kappa_m( \hat{x})$. 
 Let $R \subset P$ denote the stabilizer of $\kappa_m (\hat{x})$. Then, the $\kappa^x_m$-level of $\hat{x}$ in $P \cdot \hat{x}$
 is 
 $$(\kappa^x_m)^{-1} (\kappa_m(\hat{x})) = R \cdot \hat{x}.$$ 
 
Hence, the local isotropy group $\Iso^{\mathrm{Loc}}_x$ is identified with the subgroup $R \subset P$. As a stabilizer of the point $\kappa_m(\hat{x})$ of $\mathcal{Y}^m$, $R$ is an algebraic subgroup 
 of $P$ (for conformal pseudo-Riemannian structures, $P$ is an algebraic group).  From the previous statement that local Killing fields generate the tangent spaces of 
 $\bf P$-orbits, it follows that the Lie algebra of $\Iso_x^{Loc}$ consists precisely of local Killing fields vanishing at $x$.

 \begin{remark}  As mentioned above,  for locally homogeneous  Riemannian  metrics, Singer's Theorem gives an explicit description of the local isotropy group at $p$ as the subgroup of $\GL(T_pM)$ preserving the curvature tensor and its covariant derivatives.    In the conformal case, the local isotropy rather naturally lies in the group of 
 2-jets of diffeomorphisms, which is something delicate for manipulation. One can however consider the image of $\Iso_p^{Loc}$  by the derivative representation and 
  get a  subgroup $\bf H_p$  of $\GL(T_pM)$.  It is interesting to characterize $\bf H_p$ analogously to Singer's Theorem case. Naturally, the Weyl tensor must be involved, but higher order conformally invariant tensors are not obvious to define.  
 \end{remark}

\bibliographystyle{abbrv}

\begin{thebibliography}{1}


\bibitem{Alekseevskii72}
D.~V.~Alekseevski\u{i}.
\newblock Groups of conformal transformations of {R}iemannian spaces.
\newblock {\em Mat. Sb. (N.S.)}, 89(131):280--296, 356, 1972.


\bibitem{AlekseevskyGalaev25}
D.~V.~Alekseevsky and A.~S. Galaev.
\newblock Conformally homogeneous lorentzian spaces.
\newblock {\em Communications in Contemporary Mathematics}, 0(0):2550082, 2025.



\bibitem{Bal} W. Ballmann, Geometric structures, Lecture notes available 
\url{https://people.mpim-bonn.mpg.de/hwbllmnn/archiv/geostr00.pdf}


\bibitem{Blau-PL} Blau, M. (2011). Plane waves and Penrose limits. Lecture notes for the ICTP school on mathematics in string and field theory (June 2-13 2003).


\bibitem{blau-oloughlin03}
M.~Blau and M.~O'Loughlin.
\newblock Homogeneous plane waves.
\newblock {\em Nuclear Phys. B}, 654(1-2):135--176, 2003.




\bibitem{DG} G. d'Ambra and M. Gromov, Lectures on the transformation groups: geometry and dynamics, J. Differential Geom. Suppl. 1 (1991), 19 -111.



\bibitem{olmos-discala01}
A.~J. Di~Scala and C.~Olmos, \emph{The geometry of homogeneous
  submanifolds of hyperbolic space}, Math. Z. \textbf{237} (2001), no.~1,
  199--209.


\bibitem{EJ09} Eberlein, P. and Jablonski, M., 2009. Closed orbits of semisimple group actions and the real Hilbert-Mumford function. Contemporary Mathematics, 491, p.283.


\bibitem{Ferrand96}
J.~Ferrand.
\newblock The action of conformal transformations on a {R}iemannian manifold.
\newblock {\em Math. Ann.}, 304(2):277--291, 1996.

\bibitem{Frances08}
C.~Frances.
\newblock Essential conformal structures in {R}iemannian and {L}orentzian
  geometry.
\newblock In {\em Recent developments in pseudo-{R}iemannian geometry}, ESI
  Lect. Math. Phys., pages 231--260. Eur. Math. Soc., Z\"urich, 2008.

\bibitem{frances12}
C.~Frances.
\newblock About pseudo-{R}iemannian {L}ichnerowicz conjecture.
\newblock {\em Transform. Groups}, 20(4):1015--1022, 2015.

\bibitem{Frances2012}
C.~Frances.
\newblock
D\'eg\'enerescence locale des transformations conformes pseudo-riemanniennes, Ann. Inst. Fourier 62 (2012), no. 5, 1627-1669.

\bibitem{F-AnnexA} C. Frances, Variations on Gromov’s open-dense orbit theorem. Bulletin de la Société mathématique de France 146.4 (2018): 713-744.


\bibitem{FM2013} C. Frances and K. Melnick, Formes normales pour les champs conformes pseudo-riemanniens, Bull. Soc. Math. France 141 (2013), no. 3, 377-421.

\bibitem{GlobkeLeistner16}
W.~Globke and T.~Leistner.
\newblock Locally homogeneous pp-waves.
\newblock {\em J. Geom. Phys.}, 108:83--101, 2016.

\bibitem{Gro}
M. Gromov. Rigid transformations groups. In G\'eom\'etrie diff\'erentielle (Paris, 1986), volume 33 of
Travaux en Cours, pages 65 -139. Hermann, Paris, 1988.


\bibitem{HPW} M. Hanounah, L. Mehidi, and A. Zeghib. On homogeneous plane waves. Journal of Mathematical Physics, volume 66 (2025).


\bibitem{H-C} Harish-Chandra. “On Faithful Representations of Lie Groups.” Proceedings of the American Mathematical Society 1, no. 2 (1950): 205–10. https://doi.org/10.2307/2031923.



\bibitem{Holland-Sparling} J. Holland and G. Sparling. Sachs equations and plane waves II: Isometries and conformal isometries. arXiv preprint arXiv:2405.12748, 2024.


\bibitem{Hum} Humphreys, James E. Linear algebraic groups. Vol. 21. Springer Science and Business Media, 2012.

\bibitem{LedgerObata70}
A.~J. Ledger and M.~Obata.
\newblock Compact riemannian manifolds with essential groups of
  conformorphisms.
\newblock {\em Trans. Amer. Math. Soc.}, 150:645--651, 1970.

\bibitem{LeistnerTeisseire22}
T.~Leistner and S.~Teisseire.
\newblock Conformal transformations of {C}ahen-{W}allach spaces.
\newblock {\em Ann. Inst. Fourier (Grenoble)}, 75(5):2147--2187, 2025.

\bibitem{Lelong-Ferrand71}
J.~Lelong-Ferrand.
\newblock Transformations conformes et quasi-conformes des vari\'et\'es
  riemanniennes compactes (d\'emonstration de la conjecture de {A}.
  {L}ichnerowicz).
\newblock {\em Acad. Roy. Belg. Cl. Sci. M\'em. Collect. 8(2)}, 39(5):44, 1971.


\bibitem{Malcev} A. Malcev, “On the theory of the Lie groups in the large”, Sb. Math., 58:2 (1945)


\bibitem{Mostow} George D. Mostow : The extensibility of local Lie groups of transformations and groups on surfaces. Ann. of Math., 52:606–636, 1950.

\bibitem{Obata71}
M.~Obata.
\newblock The conjectures on conformal transformations of {R}iemannian
  manifolds.
\newblock {\em J. Differential Geometry}, 6:247--258, 1971.
%


\bibitem{Pal56} Palais, Richard S. A global formulation of the Lie theory of transformation groups. Mem. Amer. Math. Soc., (22), 1957.


\bibitem{Pecastaing} V.~ Pecastaing, On two theorems about local automorphisms of geometric structures
Tome 66, no 1 (2016), p.~175 - 208.








\bibitem{Perr} Perrin, N. Linear algebraic groups. Hausdorff Center for Mathematics, Universitat Bonn (2015).


\end{thebibliography}
\providecommand{\MR}[1]{}\def\cprime{$'$} \def\cprime{$'$} \def\cprime{$'$}

\end{document}